\newtheorem{theorem}{Theorem}
\newtheorem{lemma}{Lemma}
\newtheorem{corollary}{Corollary}
\newtheorem{defn}{Definition}
\newtheorem{remark}{Remark}
\newtheorem{result}{Result}
\newtheorem{claim}{Claim}
\journal{}
\begin{document}

\begin{frontmatter}



\title{Arbitrary high-order unconditionally stable methods for reaction-diffusion equations with inhomogeneous boundary condition via Deferred Correction\thanks{Part of this work was supported by the NSERC CREATE program “Génie par la Simulation” as a scholarship to the first author. The second author would like to acknowledge the financial support of the Discovery Grant Program of the Natural Sciences and Engineering Research Council of Canada (NSERC). 
}}


\author[label1]{Saint-Cyr E.R. Koyaguerebo-Im\'e}
\ead[label1]{skoyague@univ-bangui.org}
\address[label1]{ Université de Bangui, Faculté des Sciences, Département de Mathématiques et Informatique, BP 1450, Bangui, République Centrafricaine,
              Tel.: +236 72 47 83 77
}
\author[label2]{Yves Bourgault}
\ead[label2]{ybourg@uottawa.ca}

\address[label2]{Department of Mathematics and Statistics, University of Ottawa, STEM Complex,\\ 150 Louis-Pasteur Pvt, Ottawa, ON, Canada, K1N 6N5, Tel.:  +613-562-5800x2103
}
\cortext[cor1]{Corresponding author: skoyague@univ-bangui.org}

\begin{abstract}
In this paper we analyse full discretizations of an initial boundary value problem (IBVP) related to reaction-diffusion equations. To avoid possible order reduction, the IBVP is first transformed into an IBVP  with homogeneous boundary conditions (IBVPHBC) via a lifting of inhomogeneous Dirichlet, Neumann or mixed Dirichlet-Neumann boundary conditions. The IBVPHBC is discretized in time via the deferred correction method for the implicit midpoint rule and leads to a time-stepping scheme of order $2p+2$ of accuracy at the stage $p=0,1,2,\cdots $ of the correction. Each semi-discretized scheme results in a nonlinear elliptic equation  for which the existence of a solution is proven using the  Schaefer fixed point theorem. The elliptic equation corresponding to the stage $p$ of the correction is discretized by the Galerkin finite element method and gives a full discretization of the IBVPHBC. This fully discretized scheme is unconditionally stable with order $2p+2$ of accuracy in time. The order of accuracy in space is equal to the degree of the finite element used when the family of meshes considered is shape-regular while an increment of one order is proven for quasi-uniform family of meshes. Numerical tests with a bistable reaction-diffusion equation having a strong stiffness ratio and a linear reaction-diffusion equation addressing order reduction are performed and demonstrate the unconditional convergence of the method. The orders 2,4,6,8 and 10 of accuracy in time are achieved.

\end{abstract}

%

\begin{keyword}time-stepping methods \sep deferred correction \sep high order methods \sep reaction-diffusion equations \sep  finite elements


\MSC 35K57 \sep 35B05 \sep 65N30 \sep 65M12

\end{keyword}

\end{frontmatter}


\section*{Introduction}
\label{sec;introduction}
Let $\Omega $ be a bounded domain in $\mathbb{R}^d$ ($ d=1,2,3$) with smooth boundary $\partial \Omega$ and $T>0$. Consider the following reaction-diffusion system 
\begin{equation}
\label{a1}
\left\lbrace \begin{aligned}
u'-M\Delta u +f(u)&=S \mbox{ in } \Omega \times (0,T),\\
B u&=\varphi \mbox{ on } \partial \Omega \times [0,T],\\
u(.,0)&=u_0 \mbox{ in } \Omega ,
\end{aligned}
\right. 
\end{equation}
where $u:\Omega \times [0,T] \rightarrow \mathbb{R}^J$ is the unknown, for a positive integer $J$, $M$ is an $J\times J$ constant matrix, $f:\mathbb{R}^J \rightarrow \mathbb{R}^J$, $S, \varphi \,:\,\Omega \times (0,T) \rightarrow \mathbb{R}^J$ are given smooth functions, and $B$ is an operator denoting Dirichlet, Neumann or mixed Dirichlet-Neumann boundary conditions. This is a general form of reaction-diffusion equations (see for instance \cite{smoller2012shock}) that model various phenomena in physics, combustion, chemical reactions, population dynamics and biomedical science (cancer modelling and other physiological processes) (see, e.g., \cite{pesin2004some, newell1969finite, smoller2012shock, markowich2007applied, volpert2014elliptic}). 

We suppose that $M$ is positive definite and the function $f$ satisfies the following two monotonicity conditions 
\begin{equation}
\label{a2}
\left( f(x)-f(y),x-y\right) \geq \alpha \vert x-y \vert^q +\tau(y) |x-y|^2, \forall x,y\in\mathbb{R}^J, \mbox{for some } \alpha \geq 0,q\geq 1,
\end{equation}
and
\begin{equation}
\label{a3} (df(x)y)\cdot y \geq -\mu_0 |y|^2, ~~\forall x,y\in\mathbb{R}^J,
\end{equation}
where $\mu_0$ is a nonnegative real, and $\tau$ is an arbitrary continuous real-valued function. In addition, for inhomogeneous Neumann boundary conditions, we suppose that 
\begin{equation}
\label{a3ab}\vert df(x)\vert\leq \mu_1\left( 1+|x|^{q-2}\right)  , \mbox{ for each } x\in\mathbb{R}^J, \mbox{ and }2\leq q<18,
\end{equation}
where $\mu_1$ is a positive real. Conditions (\ref{a2})-(\ref{a3}) guarantee the existence of a solution of problem (\ref{a1}) in $L^2\left( 0,T;H^2(\Omega)\right)$ (see for instance \cite{lions1969quelques, temam1997infinite, evans2010partial}),  and uniqueness and high order regularity can be deduced. The conditions (\ref{a2})-(\ref{a3}) are at least satisfied by any  polynomial of odd degree with positive leading coefficient, and the matrix $M$ is supposed to be constant only for the sake of simplicity. In fact, all our results remain true replacing the operator $M\Delta $ by an elliptic operator $L$:
\begin{equation}
\label{a3bb} Lu=-\sum_{i,j=1}^Ja^{i,j}(x)u_{x_ix_j}+\sum_{i=1}^Jb^j(x)u_{x_i}+c_0(x)u,
\end{equation}
where the coefficients $a^{i,j}$, $b^i$ and $c_0$ are smooth functions, and $a^{i,j}=a^{j,i}$ (see, e.g., \cite[p.314]{evans2010partial} for a definition of elliptic operator).

The numerical analysis of reaction-diffusion equations takes advantage of many results available from the numerical analysis of semi-linear parabolic partial differential equations (PDEs). The method of lines (MOL) is commonly used. By this method the PDE is first discretized in space by finite element or finite difference methods, leading to a system of ordinary differential equations (ODEs). The resulting system of ODEs is then discretized by fully implicit or implicit-explicit (IMEX) time-stepping methods (see for instance  \cite{ akrivis2015stability, akrivis2004linearly, akrivis1998implicit, zlamal1977finite,thomee1984galerkin,hoff1978stability,ruuth1995implicit,kress2002deferred}). In \cite{akrivis2015stability, akrivis2004linearly, akrivis1998implicit}, linear implicit-explicit multistep methods in time together with finite element methods in space are analysed for a class of abstract semi-linear parabolic equations that includes a large class of reaction-diffusion systems. The approaches in \cite{akrivis2015stability, akrivis2004linearly, akrivis1998implicit} are the same. The authors investigate approximate solutions expected to be in a tube around the exact solution. They proceeded by induction and established that if $k$ and $k^{-1}h^{2r}$, $r\geq 2$, are small enough then the global error of the scheme is of order $p$ ($p=1,2,...,5$) in time and $r$ in space. IMEX schemes with finite difference in space and Runge-Kutta of order 1 and 2 in time are also analysed in \cite{koto2008imex,bujanda2004efficient} for a class of reaction-diffusion systems.  Otherwise, in \cite{zlamal1977finite, madzvamuse2014fully,thomee1984galerkin} fully implicit numerical methods for reaction-diffusion equations with restrictive conditions on the nonlinear term are introduced, combining finite elements in space and backward Euler, Crank-Nicolson or fractional-step $\theta$ methods in time. The resulting schemes are unconditionally stable (the time step is independent from the space step) with order 1 or 2 of accuracy in time. The time-stepping method in \cite{kress2002deferred} is constructed via a deferred correction strategy applied to the trapezoidal rule and is of arbitrary high order. However, this method concerns only linear initial value problems (IVP) (resulting eventually from a MOL) satisfying a monotonicity condition and has an issue for the starting procedure. Furthermore, the stability analysis proposed in \cite{kress2002deferred} does not guarantee unconditional stability and/or an optimal a priori error estimate, when a full discretization is considered.

In practice, the space-discretization of time-evolution PDEs leads to a stiff IVP of large dimension (we recall that a stiff problem is a problem extremely hard to solve by standard explicit methods (see, e.g., \cite{spijker1996stiffness}). To avoid overly small time steps, accurate approximate solutions for these IVPs require high order time-stepping methods having good stability properties (A-stable methods  satisfying B-convergence property are of great interest). Backward differentiation formulae (BDF) of order 1 and 2 are commonly used according to their A-stability. However, BDF methods of order 3 and higher lack stability properties (e.g. for systems with complex eigenvalues).  Moreover, Runge-Kutta methods applied to such IVPs require high computational cost and are prone to order reduction (see \cite{sanz1986convergence} or the section 2.12 of reference \cite{kennedy2016diagonally} that reviews papers dealing with order reduction about RK methods in the case of time-evolution PDEs). The paper \cite{alonso2002runge} propose a strategy to avoid order reduction for RK methods applied to linear boundary value problems, but the resulting schemes loose unconditional stability. 

The aim of this paper is to analyse the full discretization of problem (\ref{a1}) by applying the deferred correction (DC) method introduced in \cite{koyaguerebo2021arbitrary} for the time-discretization and a finite element discretization in space. The DC method in \cite{koyaguerebo2021arbitrary} addresses ODEs and consists in a successive perturbation (correction) of the implicit midpoint rule, leading to a $A$-stable and $B$-convergent scheme of order $2p+2$ of accuracy at the stage $p=0,1,2, \cdots$ of the correction. We recall that the order of accuracy of the DC schemes is guaranteed by a deferred correction condition $(DCC)$. 

In our approach, we first  transform problem (\ref{a1}), having $u$ as exact solution, into  the following boundary value problem with homogeneous boundary condition and that has exact solution $\bar{u}=u-\varphi$:
\begin{equation}
\label{aa1}
\left\lbrace \begin{aligned}
\bar{u}'-M\Delta \bar{u} +f(\bar{u}+\varphi)&=\tilde{S}& \mbox{ in } \Omega \times (0,T),\\
B\,\bar{u}&=0& \mbox{ on } \partial \Omega \times [0,T],\\
\bar{u}(0)&=u_0-\varphi(0)& \quad\mbox{ in } \Omega ,
\end{aligned}
\right. 
\end{equation}
where
\begin{equation}
\label{ab1}\tilde{S}(x,t)=S(x,t)-\varphi'(x,t)+M \Delta \varphi(x,t),\,\,\, \forall (x,t)\in \Omega \times [0,T].
\end{equation}
This strategy is already adopted in \cite{sanz1986convergence}, for Dirichlet boundary conditions related to a linear hyperbolic problem, and is useful to avoid order reduction from an artificial treatment of inhomogeneous boundary conditions. According to the fact that the trace operator is surjective \cite[Thm. B54]{ern2013theory}, $\varphi$ can be replaced by any sufficiently smooth function that agree  with the boundary condition of $u$. Therefore, to find an approximate solution of $u$, we are only interested in an approximate solution of $\bar{u}$ and retrieve that of $u$ from $u=\bar{u}+\varphi$. The main ingredient to deduce the analysis of convergence of our full discretizations of problem (\ref{aa1}) from results in \cite{koyaguerebo2021arbitrary} is the MOL. Unfortunately, the MOL is inefficient to guarantee unconditional stability, due to difficulties/impossibility to prove $DCC$ for the fully discretized schemes independently of the space step. To overcome these difficulties, we start our analysis from the time semi-discretization of problem (\ref{aa1}) by the DC method. Each time semi-discretized scheme results in an elliptic boundary value problem which existence and basic regularity properties are proven using the Schaefer fixed point theorem and elliptic regularity results. We prove that each semi-discrete solution from the stage p of the correction converges to the exact solution with order $2p+2$ of accuracy. Similarly to the case of IVP in \cite{koyaguerebo2021arbitrary}, the proof of convergence in time up to order $2p+2$ is guaranteed by a $DCC$ satisfied by the semi-discrete solutions, but, as a trick to simplify the proof, we suppose that the exact solution u of (\ref{a1}) is stationary in a small time interval $[0,(2p+1)k_0]$, where $k_0$ is a maximal time step for the time semi-discrete solutions and satisfies $k_0\mu_0 <2$ ($\mu_0$ is the constant introduced in (\ref{a3})). Various convergence results concerning the time semi-discrete solutions, which are useful for the analysis of convergence of the full discrete solutions, are proven. We consider full discretization of problem (\ref{aa1}) as discretization of the time semi-discrete schemes by Galerkin finite element method. Proof of existence of the fully discrete solutions requires an improvement of the lemma on zeros of a vector field. We prove that each fully discrete solution converges unconditionally in space to the corresponding time semi-discrete solution. The order of accuracy in space is $r+1$ when a finite element of degree r is used. The unconditional convergence in space implies an unconditional convergence both in time and space of the fully discrete solutions to the exact solution. The order of accuracy in time of the fully discrete solution corresponding to the stage p of the correction is $2p+2$. Numerical illustration with the schemes of order 2, 4, 6, 8 and 10 in time, using the bistable reaction-diffusion equation and a linear reaction-diffusion equation addressing order reduction, are given.

The paper is organized as follows. We recall some algebraic properties of finite difference operators in section \ref{sec:FE}. In section \ref{sec:SD} we introduce the semi-discretized schemes in time  and prove the existence of a solution. The analysis of convergence and order of accuracy of solutions for the semi-discretized schemes in time is done in section \ref{sec:Semi}. The fully discretized schemes are presented and analysed in section \ref{sec:Conv}, and numerical experiments are carried in section \ref{sec:numerical experiments}. 

\section{Finite difference operators}
\label{sec:FE} In this section we recall main results from finite difference (FD) approximations. Details and proofs for these results can be found in \cite{koyaguerebo2020finite}. For a time step $k>0$, we denote $t_n=nk$ and $t_{n+1/2}=(n+1/2)k$, for each integer $n$. This implies that $t_0=0$. We consider the time steps $k$ such that $0=t_0<t_1<\cdots< t_N=T$ is a partition of $[0,T]$, for a nonnegative integer $N$. The centered, forward and backward difference operators $D$, $D_+$ and $D_-$, respectively, related to $k$ and applied to a function $v$ from $[0,T]$ into a Banach space $X$ (with norm $\|\cdot\|_X$), are defined as follows:
$$Dv(t_{n+1/2})=\frac{v(t_{n+1})-v(t_n)}{k},$$
$$D_+v(t_{n})=\frac{v(t_{n+1})-v(t_n)}{k},$$ 
and 
$$D_-v(t_{n})=\frac{v(t_{n})-v(t_{n-1})}{k}.$$ The average operator is denoted by $E$:
$$E v(t_{n+1/2})=\widehat{v}(t_{n+1})=\frac{v(t_{n+1})+v(t_n)}{2}.$$
The composites of $D_+$ and $D_-$ are defined recursively. They commute, that is $(D_+D_-)v(t_n)=(D_-D_+)v(t_n)=D_-D_+v(t_n)$, and satisfy the identities
\begin{equation}
\label{bb1}
(D_+D_-)^mv(t_n)=k^{-2m} \sum_{i=0}^{2m}(-1)^i {{2m}\choose {i}}v(t_{n+m-i} ),
\end{equation}and
\begin{equation}
\label{bb2}
D_-(D_+D_-)^mv(t_n)=k^{-2m-1}\sum_{i=0}^{2m+1}(-1)^i{{2m+1}\choose {i}}v(t_{n+m-i}),
\end{equation}
for each integer $m\geq 1$ such that $0\leq t_{n-m-1}\leq t_{n+m}\leq T$. If $\left\lbrace v^n \right\rbrace_n$ is a sequence of approximation of $v$ at the discrete points $t_n$, the finite difference operators apply to $\left\lbrace v^n \right\rbrace $ and we define
$$
Dv^{n+1/2}=D_+v^{n}=D_-v^{n+1}=\frac{v^{n+1}-v^n}{k},$$and
$$E v^{n+1/2}=\widehat{v}^{n+1}=\frac{v^{n+1}+v^n}{2}.$$

We have the following three results:
\begin{result}
\label{prop:1}
For nonnegative integers $m_1$ and $m_2$, provided $v\in C^{m_1+m_2}([0,T],X)$ and $m_2\leq n\leq N-m_1$, we have
\begin{equation}
\label{b3}
\left \| D_+^{m_1}D_-^{m_2}v(t_n)\right \| \leq  \max_{t_{n-m_2}\leq t\leq t_{n+m_1}}\left \|\frac{d^{m_1+m_2}v}{dt^{m_1+m_2}} (t)\right \|.
\end{equation}
\end{result}
\begin{result}[Central finite difference approximations] There exists a sequences $\left\lbrace c_i \right\rbrace _{i\geq 2}$ of real numbers such that, for all $v\in C^{2p+3}\left( [0,T],X\right)$, where $p$ is a positive integer, and $p\leq n\leq N-1-p$, we have
\begin{equation}
\label{b6} 
v'(t_{n+1/2})=
\frac{v(t_{n+1})-v(t_n)}{k}-\sum_{i=1}^pc_{2i+1}k^{2i}D(D_+D_-)^iv(t_{n+1/2}) +O(k^{2p+2}),
\end{equation}
and
\begin{equation}
\label{b7} 
v(t_{n+1/2}) =\frac{v(t_{n+1})+v(t_n)}{2}-\sum_{i=1}^pc_{2i}k^{2i}(D_+D_-)^iE v(t_{n+1/2})+O(k^{2p+2}).
\end{equation} 

 Table \ref{tab:1} gives the ten first coefficients $c_i$.

\begin{table}[h!] 
\caption{Ten first coefficients of central difference approximations (\ref{b6}) and (\ref{b7})} \centering     
\label{tab:1}
\begin{tabular}{cccccccccc}  
\hline\noalign{\smallskip} 
    $c_2$ &$c_3$ & $c_4$ &$c_5$& $c_6$&$c_7$&$c_8$&$c_9$&~$c_{10}$&$c_{11}$\\[.75ex]
    $\frac{1}{8}$        &$\frac{1}{24}$         &$-\frac{18}{4!2^5}$      &$-\frac{18}{5!2^5}$     &$\frac{450}{6!2^7}$      &$\frac{450}{7!2^7}$      &$-\frac{22050}{8!2^9}$   &$-\frac{22050}{9!2^9}$  &$\frac{1786050}{10!2^{11}}$     & $\frac{1786050}{11!2^{11}}$\\[.75ex]
\noalign{\smallskip}\hline
\end{tabular} 
\end{table} 
\end{result} 

%

\begin{result}[Interior central finite difference approximations]
For each positive integer $p$ there exists reals $c_2^p,c_3^p,\cdots,c_{2p+1}^p$ such that, for each $v\in C^{2p+3}\left( [a,b],X\right)$ and a uniform  partition $a=\tau_0<\tau_1<...<\tau_{2p+1}=b$ of the interval $[a,b]$, with $\tau_n=a+nk$, $k=(b-a)/(2p+1)$ and $\tau_{p+1/2}=(a+b)/2$, we have  
\begin{equation}
\label{01} 
u'(\tau_{p+1/2})=
\frac{u(b)-u(a)}{b-a}-\frac{1}{b-a}\sum_{i=1}^pc^p_{2i+1}k^{2i+1}D(D_+D_-)^iu(\tau_{p+1/2}) +O(k^{2p+2}),
\end{equation}
and
\begin{equation}
\label{02} 
u(\tau_{p+1/2}) =\frac{u(b)+u(a)}{2}-\sum_{i=1}^pc^p_{2i}k^{2i}(D_+D_-)^iE u(\tau_{p+1/2})+O(k^{2p+2}).
\end{equation} 
Table \ref{tab:3} gives the coefficients ${c}^p_i$ for $p=1,2,3,4$.

\begin{table}[!ht] 
\caption{Coefficients of the approximations (\ref{01})-(\ref{02}) for $p=1,2,3,4$}
\label{tab:3}
 \centering      
\begin{tabular}{ccccccccccc}  
\hline\noalign{\smallskip} 
   $p$& ${c}^p_2$ &~${c}^p_3$ &~~~ ${c}^p_4$ &~~~~${c}^p_5$& ~${c}^p_6$&~~${c}^p_7$&~~~~${c}^p_8$&~~~~~${c}^p_9$&~~~\\[.75ex]
  1&  ~$\frac{9}{8}$        &~$\frac{9}{8}$       \\[.75ex]
  2&  $\frac{25}{8}$        &$\frac{125}{24}$         &$\frac{125}{128}$      &$\frac{125}{128}$     \\[.75ex]
  3&  $\frac{49}{8}$        &$\frac{343}{24}$         &$\frac{637}{128}$      &$\frac{13377}{1920}$     &$\frac{1029}{1024}$      &$\frac{1029}{1024}$      \\[.75ex]
  4&  $\frac{81}{8}$        &$\frac{243}{8}$         &$\frac{1917}{128}$      &$\frac{17253}{640}$     &$\frac{7173}{1024}$      &$\frac{64557}{7168}$      &$\frac{32733}{32768}$   &$\frac{32733}{32768}$  \\[.75ex]
\noalign{\smallskip}\hline
\end{tabular} 
\end{table} 
\end{result}

\section{Semi-discrete schemes in time: existence of a solution} 
\label{sec:SD}
Hereafter we suppose that (\ref{a1}) has a unique solution $u\in C^{2p+4}\left([0,T],H^{r+1}(\Omega) \right) $, for some positive integers $p$ and $r$. We denote by $\left(\cdot ,\cdot\right) $ the inner product in $L^2(\Omega)$ and by $\|\cdot\|$ the corresponding norm. The norm in the Sobolev spaces $H^m(\Omega)$ will be noted $\|\cdot\|_m$, for each nonnegative integer $m$, and we note $\|\cdot\|_{\infty}=\|\cdot\|_{L^\infty(\Omega)}$. We use $h$ and $k$ to denote stepsizes for space and time discretizations, respectively. The letter $C$ will denote any constant independent from $h$ and $k$, and that can be calculated explicitly in term of known quantities. The exact value of $C$ may change.

As in \cite{koyaguerebo2021arbitrary}, we can apply deferred correction method to (\ref{aa1}) and deduce the following schemes:

{For $j=0$, we have the \textit{implicit midpoint rule}}

\begin{equation}
\label{c1} 
\left\lbrace 
\begin{array}{ccc}
\displaystyle \frac{\bar{u}^{2,n+1}-\bar{u}^{2,n}}{k}-M\Delta \left(\frac{\bar{u}^{2,n+1}+\bar{u}^{2,n}}{2} \right)+f\left(\frac{\bar{u}^{2,n+1}+\bar{u}^{2,n}}{2}+\varphi(t_{n+1/2})\right)=\tilde{S}(t_{n+1/2}),\mbox{  in } \Omega,\\\\
B \bar{u}^{2,n}=0 \mbox{  on }  \partial \Omega,\\\\
\bar{u}^{2,0}=u_0-\varphi(0) \mbox{ in } \Omega.
\end{array}
\right. 
\end{equation} 
{For $j\geq 1$, we have }

\begin{equation}
\label{c2} 
\left\lbrace 
\begin{aligned} &\frac{\bar{u}^{2j+2,n+1}-\bar{u}^{2j+2,n}}{k}-\Lambda^j D\bar{u}^{2j,n+1/2}-M\Delta \left(E\bar{u}^{2j+2,n+1/2}-\Gamma^j E\bar{u}^{2j,n+1/2} \right)\\& +f\left(\frac{\bar{u}^{2j+2,n+1}+\bar{u}^{2j+2,n}}{2}-\Gamma^j E\bar{u}^{2j,n+1/2}+\varphi(t_{n+1/2}) \right)=\tilde{S}(t_{n+1/2}), \mbox{ in }\Omega, \mbox{ for }n\geq j+1,\\&
\quad \quad\quad\quad \quad \quad\quad\quad \quad\quad B \bar{u}^{2j+2,n}=0 \mbox{  on }  \partial \Omega,\\&
\quad \quad \quad \quad\quad\quad \quad \quad\quad\quad \bar{u}^{2j+2,0}=u_0-\varphi(0), \mbox{ in } \Omega&
\end{aligned}
\right. 
\end{equation}
where $\Gamma$ and $\Lambda$ are finite differences operators defined for each positive integer $j$, and $n\geq j$, by
\begin{equation}
\label{c2ab}
\Lambda^j u(t_n)=\sum_{i=1}^jc_{2i+1}k^{2i}(D_+D_-)^iu(t_n)=\sum_{i=1}^j\sum_{l=0}^{2i}c_{2i+1}(-1)^l{{2i}\choose{l}}u(t_{n+i-l}),
\end{equation}
and
\begin{equation}
\label{c2ac}
\Gamma^j u(t_n)=\sum_{i=1}^jc_{2i}k^{2i}(D_+D_-)^iu(t_n)=\sum_{i=1}^j\sum_{l=0}^{2i}c_{2i}(-1)^l{{2i}\choose{l}}u(t_{n+i-l}).
\end{equation}

The scheme (\ref{c1}) has unknowns $\left\lbrace \bar{u}^{2,n}\right\rbrace_{n=1}^N $ corresponding to approximations of $\bar{u}(t_n)$, expected to be of order 2 of accuracy. For (\ref{c2}) the unknowns are $\left\lbrace \bar{u}^{2j+2,n}\right\rbrace_{n=j+1}^N$, expected to be of order $2j+2$, while $\left\lbrace \bar{u}^{2j,n}\right\rbrace_{n=j}^N$ is supposed known from the preceding stage. To avoid computing approximate solution of (\ref{a1}) for $t<0$, the scheme (\ref{c2}) is used only for $n\geq j$. For the starting values, $0\leq n\leq j-1$, we consider the scheme
\begin{equation}
\label{c2b} 
\left\lbrace 
\begin{aligned} D\bar{u}^{2j+2,n+1/2}-&\frac{1}{2j+1}\bar{\Lambda}^j D \bar{\bar{u}}^{\,2j,n_j+1/2}-M\Delta \left(  E\bar{{u}}^{\,2j+2,n+1/2}-\bar{\Gamma}^j E\bar{\bar{u}}^{\,2j,n_j+1/2} \right) \\& +f\left(E\bar{{u}}^{\,2j+2,n+1/2}-\bar{\Gamma}^j E\bar{\bar{u}}^{\,2j,n_j+1/2}+\varphi(t_{n+1/2})  \right)=\tilde{S}(t_{n+1/2}),\\&
B \bar{u}^{2j+2,n}=0 \mbox{  on }  \partial \Omega,\\&
\quad \bar{u}^{2j+2,0}=u_0-\varphi(0), \mbox{ in } \Omega,
\end{aligned}
\right. 
\end{equation}
where we set $n_j=(2j+1)n+j$,
\begin{equation}
\begin{aligned}
\label{c2c}
\frac{1}{2j+1}\bar{\Lambda}^j D \bar{\bar{u}}^{\,2j,(2j+1)n+j+1/2}=k^{-1}\sum_{i=1}^j\sum_{l=0}^{2i+1}c_{2i+1}^j(-1)^l{{2i+1}\choose{l}}\bar{\bar{u}}^{\,2j,(2j+1)n+j+i-l+1},
\end{aligned}
\end{equation}and
\begin{equation}
\label{c2d}
\begin{aligned}
\bar{\Gamma}^j\bar{\bar{u}}^{\,2j,(2j+1)n+j}=\sum_{i=1}^j\sum_{l=0}^{2i}c_{2i}^j(-1)^l{{2i}\choose{l}}\bar{\bar{u}}^{\,2j,(2j+1)n+j+i-l}.
\end{aligned}
\end{equation}
This scheme is built from (\ref{01}) and (\ref{02}), for $a=t_n$ and $b=t_{n+1}$. $\left\lbrace \bar{\bar{u}}^{\,2,n}\right\rbrace_{n=1}^N $ is computed from (\ref{c1}) with time the step $k/3$ instead of $k$. Similarly, $\left\lbrace \bar{\bar{u}}^{\,2j,n}\right\rbrace_{n=j}^N $, $j\geq 2$, is computed from the scheme (\ref{c2}) with the time step $k/(2j+1)$ instead of $k$.

To prove the existence of a solution for the schemes (\ref{c1}) and (\ref{c2}), we need the following lemma which is also indispensable for the proof of convergence of the semi-discrete solutions from (\ref{c2}) in the next section.
\begin{lemma}\label{lem:1} Let $k>0$ such that $k\max\left\lbrace  |\tau (0)|,\mu_0\right\rbrace  \leq 1/4$, $v \in L^2(\Omega)$ and $\sigma\in H^2(\Omega)$. Suppose that one of the following four conditions is satisfied:
\begin{itemize}
\item[(i)] $B$ denotes Dirichlet boundary conditions;
\item[(ii)] $B$ denotes Neunann boundary conditions  with $\sigma =0$;
\item[(iii)] $B$ denotes Neumann boundary conditions, and $2\leq q<9$ in (\ref{a3ab});
\item[(iv)] $B$ denotes Neumann boundary conditions, $9\leq q<18$ in (\ref{a3ab}), and $\sigma\in H^3(\Omega)$.
\end{itemize}
Then the elliptic problem
\begin{equation}
\label{c3}
u-kM \Delta u+kf(u+\sigma)=v ~~~\mbox{ in } \Omega,
\end{equation}

\begin{equation}
\label{c4} B u=0~~\mbox{  on }  \partial \Omega
\end{equation}has a solution $u\in H^2(\Omega)$ 
satisfying the inequality
\begin{equation}
\label{c5}
 \| u\|_2 \leq C\left(k^{-1} \| v-u\|+\| u\|_1+\|f(\sigma)\|_1+\|\sigma\|_2 \right)^\nu,
\end{equation}
where $\nu\geq 1$ is a real taking the value 1 in cases (i) and (ii), $C$ is a constant depending only on the positive definite matrix $M$ and $\Omega$, $\sigma$. The Neumann boundary conditions may be replaced by mixed Dirichlet-Neumann boundary conditions.
\end{lemma}

\begin{proof}The existence can be deduced from the Schaefer fixed point theorem \cite[ p. 541]{evans2010partial}. In fact, given $u\in H^2(\Omega)$, the problem
\begin{equation}
\label{c7}
w-kM \Delta w+kf(u+\sigma)=v ~~~\mbox{ in } \Omega,
\end{equation}
\begin{equation}
\label{c8} B w=0~~\mbox{  on }  \partial \Omega,
\end{equation}
has a unique solution $w\in H^2(\Omega)$ (see \cite[p.336]{evans2010partial} or \cite[ Thm 3.10]{ern2013theory}). Consider the nonlinear mapping 
$$A\,:\,  H^2(\Omega) \longrightarrow H^2(\Omega),$$
which maps $u\in H^2(\Omega)$ to the unique solution $w=A[u]$ of (\ref{c7})-(\ref{c8}). It is enough to prove that $A$ is continuous, compact, and that the set 
\begin{equation}
\label{c8b} \displaystyle \Sigma =\left\lbrace u\in H^2(\Omega)\,|\,u=\lambda A[u],  \mbox{ for some } \lambda \in [0,1] ~ \right\rbrace
\end{equation}is bounded.

\vspace*{.5cm}
\noindent
(I) ~~~The mapping $A$ is continuous. Indeed, let $\left\lbrace  u_m \right\rbrace  _{m=1}^{\infty} $ in $H^2(\Omega)$ that converges to $u\in H^2(\Omega)$. For each $m=1,2,\cdots$, let $w_m=A[u_m]$ and $w=A[u]$. Then $w-w_m$ belongs to $H^2(\Omega)$ and satisfies the equation

\begin{equation}
\label{c9}(w-w_m)-k M\Delta (w-w_m)+k\left(  f(u+\sigma)-f(u_m+\sigma) \right) =0 ~~~\mbox{ in } \Omega.
\end{equation}

$$B(w-w_m)=0\,\mbox{ on } \partial \Omega.$$
The inner product of (\ref{c9}) with $w-w_m$, taking into account the boundary condition,  yields

\begin{equation}
\label{c10}\|w-w_m\|^2+\gamma k\|\nabla (w-w_m)\|^2+k\left(  f(u+\sigma)-f(u_m+\sigma),w-w_m \right)\leq 0. 
\end{equation} We can write,
$$f(u(x)+\sigma(x))-f(u_m(x)+\sigma(x))=\int_0^1df\left( u(x)+\sigma(x)-\xi (u(x)-u_m(x)))(u(x)-u_m(x)\right) d\xi. $$
Since $u_m  \longrightarrow u$ in $H^2(\Omega)$ and $H^2(\Omega)\hookrightarrow C^0(\overline{\Omega})$, there exists a positive integer $m_0$ such that $m\geq m_0$ implies
\begin{equation}
\label{c10b}\max_{x\in \overline{\Omega}}\vert u(x)-u_m(x)\vert \leq c_2\|u-u_m\|_2 \leq 1,
\end{equation}
where $c_2$ is the  constant from the Sobolev embedding. It follows that
\begin{equation}
\label{c11}\vert f(u(x)+\sigma(x))-f(u_m(x)+\sigma(x))\vert \leq \beta \vert u(x)-u_m(x)\vert,
\end{equation}
where
$$\beta=\max_{|y| \leq 1+c_2( \| u\|_2+\|\sigma\|_2)}\vert df(y)\vert .$$
Therefore, by Cauchy-Schwartz inequality we have
$$
k\left\vert \left( f(u+\sigma)-f(u_m+\sigma),w-w_m  \right) \right \vert \leq  k\beta\|u-u_m\|\|w-w_m\| \leq \frac{(k\beta)^2}{2} \|u-u_m\|^2+\frac{1}{2}\|w-w_m\|^2 .$$
The last inequality substituted into (\ref{c10}) yields
$$\|w-w_m\|^2+2\gamma k\|\nabla (w-w_m)\|^2\leq (k\beta)^2 \|u-u_m\|^2.$$
It follows that $w_m\rightarrow w$ in $H^1(\Omega)$ when $m\rightarrow +\infty$. On the other hand, elliptic regularity results applied to the identity (\ref{c9}) yields, owing to (\ref{c11}) and the last inequality,
$$\|w-w_m\|_2\leq C\left( k^{-1}\|w-w_m\| +\|f(u+\sigma)-f(u_m+\sigma)\|\right)\leq 2\beta C\|u-u_m\| \rightarrow 0\mbox{ as } m\rightarrow +\infty. $$
Whence $\left\lbrace   w_m \right\rbrace  _{m=1}^{+\infty} $ converges to $w$ in $H^2(\Omega)$, and the continuity of the mapping $A$ follows.

\vspace*{.5cm}
\noindent
(II)\,\,The mapping A is compact. Indeed, given a bounded sequence $\left\lbrace  u_m \right\rbrace _{m\in \mathbb{N}} $ in  $H^2(\Omega)$, from the compact embedding $H^2(\Omega) \hookrightarrow L^2(\Omega)$ we can extract a  subsequence $\left\lbrace  u_{m_j} \right\rbrace _{j\in \mathbb{N}} $ that converges to $u$ strongly in $L^2(\Omega)$ and weakly in $H^2(\Omega)$.  The subsequence $\left\lbrace  u_{m_j} \right\rbrace _{j\in \mathbb{N}}$ is then bounded in $H^2(\Omega)$. Let
$$ \kappa =\sup_{m\in \mathbb{N}}\|u_m\|_2 ~~\mbox{ and }~~\beta '=\displaystyle\max_{|y|\leq c_2(\kappa +\|u\|_2+\|\sigma\|_2)}|df(y)|.$$
Therefore, proceeding exactly as in part (I), substituting $m$ by $m_j$, the inequality (\ref{c10b}) by
$$\max_{x\in \overline{\Omega}}\vert u_{m_j}(x)\vert \leq c_2\sup_{m\in \mathbb{N}}\|u_{m_j}\|_2=c_2\kappa,$$ and $\beta$ by $\beta '$ in (\ref{c11}), we deduce that $w_{m_j}=A[u_{m_j}] \rightarrow w$ strongly in $H^2(\Omega)$. Hence $A$ is compact.

\vspace*{.5cm}
\noindent
(III)\,\, We prove that the set $\displaystyle \Sigma$ is bounded, proceeding in two different ways, depending on the boundary condition. \vspace{.3cm}

\noindent 
a)\,\,  First we suppose that $B$ denotes Dirichlet boundary conditions (case (i)), or $B$ denotes Neumann boundary conditions  with $\sigma =0$ (case (ii)).

Let $ u\in H^2(\Omega)$ such that $u=\lambda A[u]$ for some $\lambda \in (0,1]$. Then $u$ satisfies 
\begin{equation}
\label{c12}
u-kM\Delta u+\lambda kf(u+\sigma)=\lambda v ~~\mbox{ in } \Omega,
\end{equation}
\begin{equation}
\label{c12b} B u=0 ~~\mbox{ on } \partial \Omega.
\end{equation}
By elliptic regularity we have
\begin{equation}
\label{c13} \|u\|_2\leq C\|k^{-1}(\lambda v-u)-\lambda f(u+\sigma)\|=C\|M\Delta u\| .
\end{equation}The inner product of (\ref{c12}) with $u+\sigma$ yields

\begin{equation}
\label{c13b}\|u+\sigma\|^2-k\int_\Omega M\Delta (u+\sigma)\cdot (u+\sigma)\,dx+\lambda k\int_{\Omega}f(u+\sigma)\cdot \left( u+\sigma\right)\, dx= \int_{\Omega}\left( \lambda v+\sigma-kM\Delta \sigma \right) \cdot (u+\sigma)dx.
\end{equation}
We can write
$$M\Delta (u+\sigma)\cdot (u+\sigma)=\sum_{i,j=1}^JM_{ij}\nabla\cdot \left[(u_i+\sigma_i)\nabla (u_j+\sigma_j) \right]-\sum_{l=1}^d\left[ M\frac{\partial (u+\sigma)}{\partial x_l}\right] \cdot\frac{\partial (u+\sigma)}{\partial x_l} $$and deduce from the Divergence Theorem  that
\begin{equation}
\label{c13c}\begin{aligned}-k\int_\Omega M\Delta (u+\sigma)\cdot (u+\sigma)\,dx &=-k\sum_{i,j=1}^J\int_{\partial\Omega}\, M_{ij}\sigma_i\nabla (u_j+\sigma_j)\cdot\nu\, d\mathbf{S}+k\sum_{l=1}^d\left( M\frac{\partial (u+\sigma)}{\partial x_l},\frac{\partial (u+\sigma)}{\partial x_l}\right) \\&\geq\gamma k \|\nabla (u+\sigma)\|^2-k\Vert M\Vert_F \Vert \sigma\Vert_1\Vert u+\sigma\Vert_2 
\\&\geq\gamma k\|\nabla (u+\sigma)\|^2-\frac{1}{2}k^2\varepsilon_1^2\Vert u+\sigma\Vert_2^2-\frac{1}{2\varepsilon_1^2}\Vert M\Vert_F^2 \Vert \sigma\Vert_1^2,
\end{aligned}
\end{equation}
where $\nu$ is the outward pointing normal vector field to $\partial \Omega$, $\gamma$ is the smallest eigenvalue of the positive definite matrix $M$, $\varepsilon_1$ is an arbitrary positive real, and $\Vert\,\cdot\,\Vert_F $ is the Frobenius norm. Without loss of generality we suppose that $f(0)=0$, otherwise we change $f$ by $\tilde{f}=f-f(0)$ and $v$ by $\tilde{v}=v-kf(0)$. The monotonicity condition (\ref{a2}) combined with the hypothesis of the lemma yields
 \begin{equation}
\label{c14}
\lambda k\int_{\Omega}f(u+\sigma)\cdot \left( u+\sigma\right)\, dx \geq \alpha\lambda k\|u+\sigma\|_{L^q(\Omega)}^q+\lambda k\tau(0)\|u+\sigma\|^2\geq \alpha\lambda k\|u+\sigma\|_{L^q(\Omega)}^q-\frac{1}{4}\|u+\sigma\|^2, 
\end{equation}
for all $\lambda\in (0,1]$. From Cauchy-Schwartz inequality and the Cauchy inequality with $\varepsilon =1$, we have
\begin{equation}
\label{c14b} \int_{\Omega}\left(\lambda v+\sigma-kM\Delta \sigma \right) \cdot (u+\sigma)\,dx \leq  \|\lambda v+\sigma-kM\Delta \sigma\|^2+\frac{1}{4}\|u+\sigma\|^2.
\end{equation}
Substituting the last three inequalities in (\ref{c13b}), we deduce that 
\begin{equation}
\label{c15}
\|u+\sigma\|^2+2 \gamma k\|\nabla (u+\sigma)\|^2\leq 2 \| \lambda v+\sigma-kM\Delta \sigma\|^2+k^2\varepsilon_1^2\Vert u+\sigma\Vert_2^2+{\varepsilon_1^{-2}}\Vert M\Vert_F^2 \Vert \sigma\Vert_1^2.
\end{equation}
On the other hand, the inner product of (\ref{c12}) with $-\Delta (u+\sigma)$ yields 
\begin{equation}
\label{c15b}
\gamma \|\Delta (u+\sigma)\|^2\leq -k^{-1}\int_{\Omega} (\lambda v-u-kM                                      \Delta \sigma)\cdot \Delta (u+\sigma)dx+\int_{\Omega} \lambda f(u+\sigma)\cdot \Delta \left( u+\sigma\right) \,dx.
\end{equation}
We can write
$$f(u+\sigma)\cdot \Delta \left( u+\sigma\right) =\sum_{i=1}^J\nabla \cdot \left[ f_i(u+\sigma)\nabla (u_i+\sigma_i)\right]-\sum_{i=1}^d\left[ df(u+\sigma)\left( \frac{\partial (u+\sigma)}{\partial x_i}\right)\right] \cdot\frac{\partial (u+\sigma)}{\partial x_i}, $$
and deduce from (\ref{a3}), Dirichlet boundary conditions  and the Divergence Theorem that
$$
\begin{aligned}
\int_{\Omega}f(u+\sigma)\cdot \Delta \left(u+\sigma \right)\, dx&=\sum_{i=1}^J\int_{\partial \Omega}f_i(\sigma)\nabla (u_i+\sigma_i)\cdot\nu\, d\mathbf{S}\\& -\sum_{i=1}^d\int_{\Omega}\left[ df(u+\sigma)\left( \frac{\partial (u+\sigma)}{\partial x_i}\right)\right] \cdot\frac{\partial (u+\sigma)}{\partial x_i} dx \\&\leq \|f(\sigma)\|_1\|u+\sigma\|_2 +
\mu_0 \sum_{i=1}^d\left \Vert \frac{\partial( u+\sigma)}{\partial x_i}\right \Vert^2
\\&\leq \frac{1}{2}\varepsilon_2^2\|u+\sigma\|_2^2+\frac{1}{2\varepsilon_2^2}\|f(\sigma)\|_1^2+\mu_0 \|\nabla (u+\sigma)\|^2,
\end{aligned}
$$
where $\varepsilon_2$ is an arbitrary positive real. For Neumann boundary conditions with $\sigma=0$, the integral on $\partial\Omega$ vanishes,  and the right side of the later inequality may be replaced by $\mu_0 \|\nabla (u+\sigma)\|^2$. By Cauchy-Schwartz inequality and the Cauchy inequality with $\varepsilon =1/(2\gamma) $ we have 
$$ 
\begin{aligned}
\left \vert k^{-1}\int_{\Omega} (\lambda v-u-kM\Delta \sigma)\cdot \Delta(u+\sigma)dx \right\vert&\leq k^{-1}\|\lambda v-u-kM\Delta \sigma\|\|\Delta (u+\sigma)\|\\&\leq \frac{1}{2\gamma k^2} \|\lambda v-u-kM\Delta \sigma\|^2+\frac{\gamma}{2}\|\Delta (u+\sigma)\|^2.\end{aligned}$$
Substituting the last two inequalities in (\ref{c15b}), we obtain
$$\gamma^2 \|\Delta (u+\sigma)\|^2 \leq k^{-2} \|\lambda v-u-kM\Delta \sigma\|^2+\lambda\gamma\varepsilon_2^2\|u+\sigma\|_2^2+(\lambda\gamma/\varepsilon_2^2)\|f(\sigma)\|_1^2+2\lambda\gamma\mu_0 \|\nabla (u+\sigma)\|^2.
$$
Therefore, 
\begin{equation*}
\begin{aligned}
 \| M\Delta (u+\sigma)\|^2 \leq &\left(\Vert M\Vert_F/\gamma \right)^2\left(k^{-2} \|\lambda v-u-kM\Delta \sigma\|^2+\gamma\varepsilon_2^2\|u+\sigma\|_2^2+(\gamma/\varepsilon_2^2)\|f(\sigma)\|_1^2\right. \\&\left. +2\gamma\mu_0 \|\nabla (u+\sigma)\|^2 \right) 
 \end{aligned}
\end{equation*} since $0\leq \lambda \leq 1$, and we deduce from (\ref{c13}) and the triangle inequality that 
\begin{equation}
\label{c16} \| u\|_2 \leq C\left[ k^{-1} \|\lambda v-u\|+\sqrt{\gamma}\varepsilon_2\|u\|_2+\sqrt{\gamma}\varepsilon_2^{-1}\|f(\sigma)\|_1+\sqrt{2\gamma\mu_0} \|\nabla (u+\sigma)\|+(1+\sqrt{\gamma} \varepsilon_2)\|\sigma\|_2\right] . 
\end{equation} 
Fixing $\varepsilon_1=\varepsilon_2=1/(3C+2\sqrt{\gamma}C)$, the last inequality together with (\ref{c15}) and the inequality $k\mu_0\leq 1/4$ yields, 
$$\|u\|_2\leq C\left( k^{-1}\|v\|+k^{-1}\|\sigma\|_1+\|f(\sigma)\|_1+\|\sigma\|_2\right) ,$$
and it follows that $\Sigma$ is bounded. From (i)-(iii) we deduce by the Schaefer fixed point theorem that (\ref{c3})-(\ref{c4}) has a solution  $u\in H^2(\Omega)$ and (\ref{c5}) follows, taking $\lambda =1$ in (\ref{c16}).

\vspace{.3cm}
\noindent
b)\,\, We suppose that $B$ denotes Neumann boundary conditions and $f$ satisfies satisfies (\ref{a3ab}), cases (iii)-(iv). Then, for each $u\in H^2(\Omega)$ such that $u=\lambda A[u]$, for some $\lambda\in (0,1]$, the function $u$ satisfies (\ref{c12}), and inequality (\ref{c13c}) becomes
\begin{equation}
\label{c17}\begin{aligned}-k\int_\Omega M\Delta (u+\sigma)\cdot (u+\sigma)\,dx &=-k\sum_{i,j=1}^J\int_{\partial\Omega}\, M_{ij}(\sigma_i+u_i)\nabla \sigma_j\cdot\nu\, d\mathbf{S}+k\sum_{l=1}^d\left( M\frac{\partial (u+\sigma)}{\partial x_l},\frac{\partial (u+\sigma)}{\partial x_l}\right) \\&\geq\gamma k \|\nabla (u+\sigma)\|^2-k\Vert M\Vert_F \Vert \sigma\Vert_2\Vert u+\sigma\Vert_1 
\\&\geq\frac{1}{2} \gamma k\|\nabla (u+\sigma)\|^2-\frac{1}{4}\Vert u+\sigma\Vert^2-\left( k^2+\gamma^{-1} k/2\right)\Vert M\Vert_F^2 \Vert \sigma\Vert_2^2.
\end{aligned}
\end{equation}
Since identity (\ref{c13b}) and inequalities (\ref{c14})-(\ref{c14b}) do not depend on boundary conditions, according to (\ref{c17}), inequality (\ref{c15}) becomes
\begin{equation}
\label{c18}
\gamma k\|\nabla (u+\sigma)\|^2+4\lambda \alpha k\|u+\sigma\|_{L^q(\Omega)}^q \leq 4 \| \lambda v-u-kM\Delta \sigma\|^2+\left(4k^2+2\gamma^{-1} k \right) \Vert M\Vert_F^2 \Vert \sigma\Vert_2^2.
\end{equation}
We can write
\begin{equation}
\label{c18b}\begin{aligned}
f(u+\sigma)\cdot \Delta u =\sum_{i=1}^J\nabla \cdot &\left[ f_i(u+\sigma)\nabla u_i\right]-\sum_{l=1}^d\left[ df(u+\sigma)\left( \frac{\partial (u+\sigma)}{\partial x_l}\right)\right] \cdot\frac{\partial (u+\sigma)}{\partial x_l}\\&+\sum_{l=1}^d\left[ df(u+\sigma)\left( \frac{\partial (u+\sigma)}{\partial x_i}\right)\right] \cdot\frac{\partial \sigma}{\partial x_l}.\end{aligned}
\end{equation}
There exists a real $0\leq \varepsilon_3<1$ such that, for each of conditions (iii)-(iv), we have 
\begin{equation}
\label{c19}
\begin{aligned}\left |\int_\Omega\sum_{i=1}^d\left[ df(u+\sigma)\left( \frac{\partial (u+\sigma)}{\partial x_i}\right)\right]\right. &\left.  \cdot\frac{\partial \sigma}{\partial x_i}\,dx \right|\leq 
\varepsilon_4 \|u+\sigma\|_2^2+C_{4}\left(\|\nabla \sigma\|^2+\|\nabla (u+\sigma)\|^2+C_\sigma\|u+\sigma\|_{L^q(\Omega)}^{\frac{2q_3}{1-\varepsilon_3}} \right),\end{aligned}
\end{equation}  
where 
$$C_\sigma=\displaystyle \left\lbrace \begin{array}{ccc}
\|\sigma\|_2^{2/(1-\varepsilon_3)},& \mbox{ if } 2\leq q<9\\\\
\|\sigma\|_3^{2/(1-\varepsilon_3)},& \mbox{ if } \sigma\in H^3(\Omega)\, \mbox{ and }\,9\leq q<18,
  \end{array}\right.  $$
 $\varepsilon_4$ is an arbitrary positive real, $q_3=q-2-\varepsilon_3$, and $C_{4}$ is a constant depending only on $\varepsilon_4$, $\Omega$ and possibly on $\sigma$. In fact:

\begin{itemize}
\item for $2\leq q<9$, we choose $\varepsilon_3=\max\left\lbrace 0\,,\, q/3-2 \right\rbrace $ such that $1\leq 2q/(2+\varepsilon_3)\leq 6$, and it follows from (\ref{a3ab}), the Sobolev embeddings $ H^2(\Omega)\hookrightarrow  C^0(\bar{\Omega})$ and $ H^2(\Omega)\hookrightarrow W^{1,2q/(2+\varepsilon_3)}(\Omega)$ that
$$\begin{aligned}\left |\int_\Omega\sum_{i=1}^d \right. &\left. \left[ df(u+\sigma)\left( \frac{\partial (u+\sigma)}{\partial x_i}\right)\right]  \cdot\frac{\partial \sigma}{\partial x_i}\,dx \right|\\&\leq \mu_1\left(\|\nabla (u+\sigma)\|\|\nabla \sigma\|+\|u+\sigma\|_2^{\varepsilon_3}\|\nabla \sigma\|_{L^{2q/(2+\varepsilon_3)}(\Omega)}\|\nabla (u+\sigma)\|_{L^{2q/(2+\varepsilon_3)}(\Omega)}\|u+\sigma\|_{L^{q}(\Omega)}^{q_3} \right)
\\&\leq C\left(\|\nabla (u+\sigma)\|\|\nabla \sigma\|+\|u+\sigma\|_2^{\varepsilon_3+1} \|\sigma\|_2 \|u+\sigma\|_{L^{q}(\Omega)}^{q_3} \right).
  \end{aligned}$$
  This inequality yields (\ref{c19}) according to the Young inequality.

\item For $9\leq q<18$ and $\sigma \in H^3(\Omega)$, we have $(6/5)q_3\leq q$, and  it follows from (\ref{a3ab}) and the Sobolev embeddings $ H^2(\Omega)\hookrightarrow  C^0(\bar{\Omega})$, $ H^2(\Omega)\hookrightarrow W^{1,6}(\Omega)$, and  $L^q(\Omega)\hookrightarrow L^{6q_3/5}(\Omega)$ that
$$\begin{aligned}\left |\int_\Omega\sum_{i=1}^d\right. &\left.\left[ df(u+\sigma)\left( \frac{\partial (u+\sigma)}{\partial x_i}\right)\right]  \cdot\frac{\partial \sigma}{\partial x_i}\,dx \right|\\&\leq \mu_1\left(\|\nabla (u+\sigma)\|\|\nabla \sigma\|+\|u+\sigma\|_2^{\varepsilon_3}\| \sigma\|_3\|\nabla (u+\sigma)\|_{L^6(\Omega)}\|u+\sigma\|_{L^{6q_{3}/5}(\Omega)}^{q_3} \right)
\\&\leq C\left(\|\nabla (u+\sigma)\|\|\nabla \sigma\|+\|u+\sigma\|_2^{\varepsilon_3+1} \|\sigma\|_3 \|u+\sigma\|_{L^q(\Omega)}^{q_3} \right).
  \end{aligned}$$
  This inequality implies (\ref{c19}) from the Young inequality.
\end{itemize} 
Integrating (\ref{c18b}) on $\Omega$, we deduce from the homogeneous Neumann boundary conditions on $u$, and inequalities (\ref{a3}) and (\ref{c19}) that
$$\begin{aligned}\int_\Omega f(u+\sigma)\cdot \Delta u\,dx \leq \mu_0 \|u+\sigma\|^2+\varepsilon_4 \|u+\sigma\|_2^2+C_{4}\left(\|\nabla (u+\sigma)\|^2+\|\sigma\|^2_2+\|u+\sigma\|_{L^q(\Omega)}^{\frac{2q_3}{1-\varepsilon_3}} \right).\end{aligned} $$
Therefore, taking the inner product of (\ref{c12}) with $-\Delta u$, we deduce that
$$\begin{aligned}\gamma^2\| \Delta u\|^2 &\leq k^{-2}\|\lambda v-u\|^2+2\gamma\mu_0 \|u+\sigma\|^2+2\gamma\varepsilon_4\|u+\sigma\|_2^2\\&+2\gamma C_{4}\left(\|\nabla (u+\sigma)\|^2 +\|\sigma\|^2_2+\|u+\sigma\|_{L^q(\Omega)}^{\frac{2q_3}{1-\varepsilon_3}} \right),\end{aligned}$$
for any $0<\lambda\leq 1$. Since $\varepsilon_4$ is arbitrary positive, the conclusion follows from elliptic regularities,  proceeding as in part iii-(a). The case where $B$ is mixed Dirichlet-Neumann boundary conditions is deduced from iii-(a)-(b).  
\end{proof}

 The following theorem shows the existence of a solution for the schemes (\ref{c1}) and (\ref{c2}).
 
 \begin{theorem}
\label{theo:2} 
 Suppose that $u_0-\varphi(0)\in H^2(\Omega)$. Then, for each nonnegative integer $n$, the scheme (\ref{c1}) and (\ref{c2}) has a solution in $H^2(\Omega)$.
 \end{theorem}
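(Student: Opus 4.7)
The plan is to recast each of the schemes (\ref{c1}) and (\ref{c2}) as an elliptic boundary value problem of precisely the form treated in Lemma~\ref{lem:1} (with $k/2$ in place of $k$), and then invoke that lemma. The outer argument will be a double induction---on the time index $n$ for a fixed correction stage $j$, and on $j$ across stages---with base case $u^{2j+2,0}=u_0\in H^2(\Omega)\cap H^1_0(\Omega)$ given by hypothesis.

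For (\ref{c1}), I would introduce the midpoint average $w:=\widehat{u}^{2,n+1}=(u^{2,n+1}+u^{2,n})/2$ as the new unknown. Using $u^{2,n+1}=2w-u^{2,n}$ and multiplying through by $k/2$, the scheme becomes
\begin{equation*}
w-\tfrac{k}{2}M\Delta w+\tfrac{k}{2}f(w)=u^{2,n}+\tfrac{k}{2}s(t_{n+1/2})\ \text{in }\Omega,\qquad w=0\ \text{on }\partial\Omega,
\end{equation*}
which is exactly the problem of Lemma~\ref{lem:1}. The right-hand side lies in $L^2(\Omega)$ by the inductive hypothesis $u^{2,n}\in H^2\cap H^1_0$ and smoothness of $s$, so the lemma delivers $w\in H^2\cap H^1_0$ and hence $u^{2,n+1}=2w-u^{2,n}\in H^2\cap H^1_0$.

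For (\ref{c2}) the same reduction works after a further shift. Set $\phi:=\Gamma^j E u^{2j,n+1/2}$ and $\psi:=D\Lambda^j u^{2j,n+1/2}$: by (\ref{c2ab})--(\ref{c2ac}) these are explicit finite linear combinations of values $u^{2j,n+i-\ell}$ produced at the preceding correction stage, hence lie in $H^2\cap H^1_0$ by the induction on $j$ (and in particular vanish on $\partial\Omega$). Putting $w:=\widehat{u}^{2j+2,n+1}$ and $\tilde{w}:=w-\phi$, a direct rearrangement transforms (\ref{c2}) into
\begin{equation*}
\tilde{w}-\tfrac{k}{2}M\Delta\tilde{w}+\tfrac{k}{2}f(\tilde{w})=u^{2j+2,n}-\phi+\tfrac{k}{2}\bigl(\psi+s(t_{n+1/2})\bigr)\ \text{in }\Omega,\qquad\tilde{w}=0\ \text{on }\partial\Omega,
\end{equation*}
again of the form of Lemma~\ref{lem:1}. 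The lemma returns $\tilde{w}\in H^2\cap H^1_0$, and then $u^{2j+2,n+1}=2(\tilde{w}+\phi)-u^{2j+2,n}$ has the required regularity. The initial values $u^{2j+2,1},\dots,u^{2j+2,j}$ needed to launch (\ref{c2}) from $n=j$ onwards are supplied by the starting scheme (\ref{c2b}), to which the same midpoint substitution applies verbatim.

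The only genuine constraint on the step size is the smallness requirement $(k/2)|\tau(0)|\leq 1/4$, i.e.\ $k|\tau(0)|\leq 1/2$, inherited from Lemma~\ref{lem:1}; this is consistent with the maximal-time-step assumption $k\leq k_0$ fixed in the introduction. The main obstacle I anticipate is not analytical but organisational: one must carry the nested induction carefully enough to guarantee that every quantity entering the right-hand side after the substitution---namely $u^{2j+2,n}$, $\phi$, $\psi$ and $s(t_{n+1/2})$---already sits in $L^2(\Omega)$, with $\phi$ moreover vanishing on $\partial\Omega$ so that $\tilde{w}$ satisfies the Dirichlet condition. Once this bookkeeping is in place, Lemma~\ref{lem:1} applied at each step closes the proof.
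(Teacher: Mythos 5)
Your proposal is correct and takes essentially the same approach as the paper: the paper's proof likewise reduces each time step, by the midpoint substitution and multiplication by $k/2$, to the elliptic problem of Lemma~\ref{lem:1} (explicitly taking $u=(u^{2,n+1}+u^{2,n})/2$ and $v=ks(t_{n+1/2})/2+u^{2,n}$ for (\ref{c1})) and proceeds by induction. Your treatment of (\ref{c2}) via the shift $\tilde{w}=w-\phi$ merely spells out the ``suitable choice of $u$ and $v$'' that the paper leaves implicit.
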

 
 \begin{proof}Proceeding by induction, the proof is immediate from Lemma \ref{lem:1} for a suitable choice of the functions $u$ and $v$. For example,  multiplying the first equation in (\ref{c1}) by $k/2$, we deduce (\ref{c3})-(\ref{c4}) for $u=(\bar{u}^{2,n+1}+\bar{u}^{2,n})/2$, $\sigma=\varphi( t_{n+1/2})$, $v=k\bar{S}(t_{n+1/2})/2+\bar{u}^{2,n}$, and $k$ substituted by $k/2$. 
 \end{proof}

Hereafter we suppose that $\bar{u}^{2j,n} \in H^{r+1}(\Omega)$, for $1\leq j\leq p+1$ and each $n=0,1\cdots, N$. Convergence results for these semi-discrete solutions are proven in section \ref{sec:Semi}. 

\section{Convergence and order of accuracy of the semi-discrete solution}
\label{sec:Semi}
This section deals with the analysis of convergence of the time-semidiscrete solutions of problem (\ref{aa1}) given by formulae (\ref{c1}) and (\ref{c2}). We start by introducing a Deferred Correction Condition (DCC) for the semi-discrete solutions in time, as in the case of IVPs from the reference \cite{koyaguerebo2021arbitrary}, and prove in Theorem \ref{theo:3} that the DCC is a sufficient condition for each semi-discrete solution $ \left\lbrace \bar{u}^{2j,n}\right\rbrace_n$ to be corrected with an increment of order 2 of accuracy. The DCC is proven for the implicit midpoint scheme (\ref{c1}) in Lemma \ref{lem:2} which, together with Theorem \ref{theo:3},  constitutes the pivot for the proof of convergence of the semi-discrete solutions $\left\lbrace \bar{u}^{2j,n}\right\rbrace_n$, $j\geq 2$. We have the following definition:

\begin{defn}[Deferred Correction Condition]  Let $u$ be the exact solution of (\ref{a1}),  and $\bar{u}=u-\varphi$. For a positive integer $j$, a sequence $\left\lbrace  \bar{u}^{2j,n}\right\rbrace _n\subset H^1(\Omega)$ of approximations of $\bar{u}$ on the uniform partition $0=t_0<t_1<\cdots <t_N=T$, $t_n=nk$, is said to satisfy the Deferred Correction Condition (DCC) for the implicit midpoint rule  if  $\left\lbrace   \bar{u}^{2j,n}\right\rbrace _n$ approximates $\bar{u}(t_n)$ with order $2j$ of accuracy in time, and for $n=1,2,...,N-2$ we have
\begin{equation} 
\label{d1}
\|(D_+D_-)D(\bar{u}^{2j,n+1/2}-\bar{u}(t_{n+1/2}))\|+\|D_+D_-(\bar{u}^{2j,n+1}-\bar{u}(t_{n+1}))\|\leq C k^{2j},
\end{equation}for each time steps $k\leq k_1$, where $k_1>0$ is fixed and $C$ is a constant independent from $k$.
\end{defn}

\begin{remark}
\label{rmk:4} Condition (\ref{d1}) is equivalent to
\begin{equation}
\label{cc1}
\left \| \Gamma^j\left(\bar{u}^{2j,n}-\bar{u}(t_n) \right) \right \|\leq Ck^{2j+2},
\end{equation}
and
\begin{equation}
\label{cc2}
\left \| (\Lambda^j -\Gamma^j)D\left(\bar{u}^{2j,n+1/2}-\bar{u}(t_{n+1/2}) \right) \right \|\leq Ck^{2j+2},
\end{equation}
for $n=j,j+1,\cdots , N-j$. This is due to the transforms
$$k^{2i}\left(D_+D_-\right)^i\left(\bar{u}^{2j,n}-\bar{u}(t_n) \right)=k^2\sum_{l=0}^{i-1}(-1)^l{{2i-2}\choose{l}}D_+D_-\left(\bar{u}^{2j,n}-\bar{u}(t_n) \right), $$
and
$$
\begin{aligned}
k^{2i} \left( D_+D_- \right)^iD &\left( \bar{u}^{2j,n+1/2}-\bar{u}(t_{n+1/2}) \right) &=k^2\sum_{l=0}^{i-1}(-1)^l{{2i-2}\choose{l}}\left( D_+D_-\right) D \left( \bar{u}^{2j,n+1/2}-\bar{u}(t_{n+1/2})\right) . \end{aligned}$$
\end{remark}
The following theorem gives a sufficient condition for the semi-discrete schemes in time to converge with the expected order of accuracy.

\begin{theorem}
\label{theo:3}
Let $j$ be a positive integer and $\left\lbrace   \bar{u}^{2j,n}\right\rbrace _n$ a sequence of approximations of $\bar{u}=u-\varphi$, on the discrete points $t_0=0<t_1<\cdots <t_N=T$, satisfying DCC for the implicit midpoint rule. Suppose that $k< k_1$, and that $\bar{u}^{2j+2,1},...,\bar{u}^{2j+2,j}$ are given and satisfy 
\begin{equation}
\label{d2}
\|\bar{u}^{2j+2,n}-\bar{u}(t_n)\|\leq Ck^{2j+2} ,~~\mbox{ for }~n=0,1,...,j.
\end{equation}
Then the sequence $\left\lbrace   \bar{u}^{2j+2,n}\right\rbrace _{n\geq j}$, solution of the scheme (\ref{c2}) built from $\left\lbrace  \bar{u}^{2j,n}\right\rbrace _n$, approximates $\bar{u}$ with order $2j+2$ of accuracy in time, and we have, for $n=0,1,\cdots,N$,
\begin{equation}
\label{d3}
\Vert \bar{u}^{2j+2,n}-\bar{u}(t_n)\Vert+ \left( \gamma k\sum_{i=j}^n\Vert \nabla \widehat{\Theta}^{2j+2,i} \Vert^2\right)^\frac{1}{2}  \leq C k^{2j+2},
\end{equation}
where
\begin{equation}
\label{d5}
\Theta^{2j+2,n}=\left(  \bar{u}^{2j+2,n}-\bar{u}(t_{n}) \right) -\Gamma^j \left( \bar{u}^{2j,n}-\bar{u}(t_n)\right),
\end{equation}
 and $C$ is a constant depending only on $j$, $T$, $M$, $u\in C^{2j+3}\left( [0,T],H^2(\Omega)\right)$, a Lipschitz constant on $f$ and the DCC constant. 
\end{theorem}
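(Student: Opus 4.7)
The strategy is to write down an evolution equation for the corrected error $\Theta^{2j+2,n}$ defined in (\ref{d5}) that has the structure of the implicit midpoint rule with a forcing of order $O(k^{2j+2})$, and then close a discrete energy estimate. First I would apply Result 2 (formulas (\ref{b6}) and (\ref{b7})) truncated at level $j$ to the exact solution:
\begin{equation*}
u'(t_{n+1/2}) = Du(t_{n+1/2}) - D\Lambda^j u(t_{n+1/2}) + R_1^n,\qquad u(t_{n+1/2}) = Eu(t_{n+1/2}) - \Gamma^j Eu(t_{n+1/2}) + R_2^n,
\end{equation*}
with $\|R_1^n\|+\|R_2^n\|_2\le Ck^{2j+2}$ thanks to $u\in C^{2j+3}([0,T],H^2)$. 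Subtracting the PDE at $t_{n+1/2}$ from (\ref{c2}) and using that $\Gamma^j$ and $\Lambda^j$ commute with $D$ and $E$, one obtains
\begin{equation*}
D\Theta^{2j+2,n+1/2} - M\Delta\,E\Theta^{2j+2,n+1/2} + \bigl[f(\xi^n)-f(u(t_{n+1/2}))\bigr] = G^n,
\end{equation*}
where $\xi^n:=\widehat{u}^{2j+2,n+1}-\Gamma^j Eu^{2j,n+1/2}$ is the $f$-argument of the scheme, the key identity $\xi^n-u(t_{n+1/2})=E\Theta^{2j+2,n+1/2}-R_2^n$ holds, and $G^n=-R_1^n-M\Delta R_2^n-(\Lambda^j-\Gamma^j)D(u^{2j,n+1/2}-u(t_{n+1/2}))$ is bounded in $L^2$ by $Ck^{2j+2}$ via the hypothesis on $\{u^{2j,n}\}$ (Remark \ref{rmk:4}, equivalences (\ref{cc1})--(\ref{cc2})).

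Next I would take the $L^2$ inner product of the error equation with $E\Theta^{2j+2,n+1/2}$. The implicit-midpoint identity gives $(D\Theta^{n+1/2},E\Theta^{n+1/2})=\tfrac{1}{2k}(\|\Theta^{n+1}\|^2-\|\Theta^n\|^2)$, and the positive definiteness of $M$ combined with integration by parts (using $\Theta\in H^1_0$) yields $-(M\Delta E\Theta^{n+1/2},E\Theta^{n+1/2})\ge\gamma\|\nabla E\Theta^{n+1/2}\|^2$. For the nonlinear term, integrating (\ref{a3}) along the segment from $u(t_{n+1/2})$ to $\xi^n$ delivers $(f(\xi^n)-f(u(t_{n+1/2})),\xi^n-u(t_{n+1/2}))\ge -\mu_0\|\xi^n-u(t_{n+1/2})\|^2$; combining with the key identity and controlling the cross term in $R_2^n$ via the assumed Lipschitz bound on $f$ produces a lower bound $-\mu_0\|E\Theta^{n+1/2}\|^2-Ck^{2j+2}\|E\Theta^{n+1/2}\|-Ck^{4j+4}$. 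Estimating $(G^n,E\Theta^{n+1/2})$ similarly with Young's inequality yields a discrete energy inequality
\begin{equation*}
\|\Theta^{2j+2,n+1}\|^2 - \|\Theta^{2j+2,n}\|^2 + 2\gamma k\,\|\nabla \widehat{\Theta}^{2j+2,n+1}\|^2 \le Ck\bigl(\|\Theta^{2j+2,n+1}\|^2+\|\Theta^{2j+2,n}\|^2\bigr) + Ck^{4j+5}.
\end{equation*}

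Finally, the starting quantity $\Theta^{2j+2,j}$ satisfies $\|\Theta^{2j+2,j}\|\le Ck^{2j+2}$, because (\ref{d2}) controls $u^{2j+2,j}-u(t_j)$ and (\ref{cc1}) controls $\Gamma^j(u^{2j,j}-u(t_j))$. Applying the discrete Gronwall lemma to the energy inequality over $n=j,j+1,\ldots,N-1$, valid once $k$ is small enough that $2Ck<1$ (compatible with $k<k_1$), yields simultaneously the pointwise bound $\|\Theta^{2j+2,n}\|\le Ck^{2j+2}$ and the dissipation sum term in (\ref{d3}). The desired bound on $\|u^{2j+2,n}-u(t_n)\|$ then follows from the triangle inequality applied to (\ref{d5}) together with (\ref{cc1}). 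I expect the main obstacle to be the nonlinear term: one must check that $\xi^n$ stays in a bounded set on which the Lipschitz constant for $f$ is effective, which I would handle by a parallel induction on $n$ using the Sobolev embedding $H^2(\Omega)\hookrightarrow L^\infty(\Omega)$ (valid for $d\le 3$) to transfer the $H^2$-control of the semi-discrete iterates — provided by Lemma \ref{lem:1} in combination with the inductive error bound — into $L^\infty$ control.
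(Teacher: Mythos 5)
Your proposal is correct and follows essentially the same skeleton as the paper's proof: form the error equation for $\Theta^{2j+2,n}$, test with the averaged error, use monotonicity for the nonlinearity, bound the consistency terms by $Ck^{2j+2}$ via Result 2 and the DCC (Remark \ref{rmk:4}), and close with a discrete Gronwall-type recursion started from (\ref{d2}). The one substantive difference is your treatment of the nonlinear term, and it is precisely where you locate your ``main obstacle.'' You compare $f(\xi^n)$ against $f(u(t_{n+1/2}))$, so the difference of arguments is $E\Theta^{2j+2,n+1/2}-R_2^n$ and the monotonicity estimate leaves a cross term $(f(\xi^n)-f(u(t_{n+1/2})),R_2^n)$ that can only be controlled with a Lipschitz constant valid on a set containing the numerical iterate $\xi^n$ — hence your proposed parallel induction for $L^\infty$ (or $H^2$) control of the iterates. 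The paper instead compares $f$ at the numerical corrected average $\widehat{u}^{2j+2,n+1}-\Gamma^j\widehat{u}^{2j,n+1}$ against $f$ at the \emph{exact} corrected average $\widehat{u}(t_{n+1})-\Gamma^j\widehat{u}(t_{n+1})$, whose difference is exactly $\widehat{\Theta}^{2j+2,n+1}$ with no remainder; condition (\ref{a2}) then yields the lower bound $\tau(\widehat{u}(t_{n+1})-\Gamma^j\widehat{u}(t_{n+1}))\|\widehat{\Theta}^{2j+2,n+1}\|^2$, where the coefficient is evaluated only at exact-solution quantities and is therefore bounded by the regularity of $u$ alone, while the residual $f(u(t_{n+1/2}))-f(\widehat{u}(t_{n+1})-\Gamma^j\widehat{u}(t_{n+1/2}))$ goes into the truncation term $\sigma^{2j+2,n+1/2}$ and is handled by the mean value theorem on exact-solution data. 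This choice of comparison point makes your anticipated $L^\infty$ bootstrap unnecessary and keeps the constant in (\ref{d3}) depending only on $u$, $f$ near the exact solution, and the DCC constant; I recommend adopting it, since your parallel induction, while plausible, would require extracting uniform $H^2$ bounds on $\widehat{u}^{2j+2,n+1}-\Gamma^j Eu^{2j,n+1/2}$ from Lemma \ref{lem:1} and would noticeably lengthen the argument.
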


\begin{proof}Combining (\ref{c2}) and (\ref{aa1}), we obtain the identity
\begin{equation}
\label{d4}
\begin{split}
f\left( E\bar{u}^{2j+2,n+1/2}-\Gamma^j E\bar{u}^{2j,n+1/2}+\varphi(t_{n+1/2})\right)-f\left(E\bar{u}(t_{n+1/2})-\Gamma^j E\bar{u}(t_{n+1/2})+\varphi(t_{n+1/2})\right)\\+D\Theta^{2j+2,n+1/2}
-M\Delta \widehat{\Theta}^{2j+2,n+1}=\sigma^{2j+2,n+1/2}+ (\Lambda^j -\Gamma^j)D(\bar{u}^{2j,n+1/2}-\bar{u}(t_{n+1/2})),
\end{split}
\end{equation}
$$B \widehat{\Theta}^{2j+2,n+1}=0 \,\, \mbox{ on } \partial \Omega,$$
where
\begin{equation*}
\begin{aligned}
\sigma^{2j+2,n+1/2}=&\bar{u}'(t_{n+1/2})- D\bar{u}(t_{n+1/2})+\Lambda^j D\bar{u}(t_{n+1/2})-M\Delta\left( \bar{u}(t_{n+1/2})-E\bar{u}(t_{n+1/2})+\Gamma^j E\bar{u}(t_{n+1/2})\right)\\&+f(\bar{u}(t_{n+1/2})+\varphi(t_{n+1/2}))-f\left( E\bar{u}(t_{n+1/2})-\Gamma^j E\bar{u}(t_{n+1/2})+\varphi(t_{n+1/2})\right).
\end{aligned}
\end{equation*}
The inner product of (\ref{d4}) with $\widehat{\Theta}^{2j+2,n+1}$, taking into account the monotonicity condition (\ref{a2}) and the fact that $B \widehat{\Theta}^{2j+2,n+1}=0$ on $\partial \Omega$, yields
\begin{equation}
\label{d6}
\begin{aligned}
&(D\Theta^{2j+2,n+1/2},\widehat{\Theta}^{2j+2,n+1}) +\gamma \|\nabla \widehat{\Theta}^{2j+2,n+1}\|^2\leq\tau(E\bar{u}(t_{n+1/2})-\Gamma^j E\bar{u}(t_{n+1/2})+\varphi(t_{n+1/2}))\|\widehat{\Theta}^{2j+2,n+1}\|^2 \\&
 +\left( \sigma^{2j+2,n+1/2}+ (\Lambda^j -\Gamma^j)D(\bar u^{2j,n+1/2}-\bar u(t_{n+1/2})),\widehat{\Theta}^{2j+2,n+1}\right).
 \end{aligned} \end{equation}
From the central finite differences formulae (\ref{b6})-(\ref{b7}) and the mean value theorem we have 
$$\|\sigma^{2j+2,n+1/2} \|\leq Ck^{2j+2},$$
where $C$ is a constant depending only on a Lipschitz condition on $f$ and the norm of $u$ as element of $C^{2j+3}\left( [0,T],H^2(\Omega)\right)$, and there exists $0<k_2\leq k_1$ such that $ k\leq k_2$ implies that 
$$
\begin{aligned}\Vert E\bar{u}(t_{n+1/2})-\Gamma^j E\bar{u}(t_{n+1/2})+\varphi(t_{n+1/2})\Vert_\infty &\leq 
\Vert  \bar{u}(t_{n+1/2})-E\bar{u}(t_{n+1/2})+\Gamma^j E\bar{u}(t_{n+1/2}) \Vert_\infty+\|{u}(t_{n+1/2})\|_\infty \\&\leq 1+\|{u}\|_{L^\infty (Q_T)},\end{aligned}$$
where $Q_T=\Omega \times (0,T)$. It follows that, for $k\leq k_2$,
$$\left \Vert \tau(E\bar{u}(t_{n+1/2})-\Gamma^j E\bar{u}(t_{n+1/2}))+\varphi(t_{n+1/2}) \right \Vert_\infty \leq \max_{|y|\leq 1+\|{u}\|_{L^\infty (Q_T)}}|\tau (y)|=:\mu.  $$
On the other hand, from the DCC we immediately have
$$\|(\Lambda^j -\Gamma^j)D(\bar{u}^{2j,n+1/2}-\bar{u}(t_{n+1/2})\| \leq C k^{2j+2} .$$
Substituting the last inequalities in (\ref{d6}), taking into account the identity
$$\left(  D\Theta^{2j+2,n+1/2},\widehat{\Theta}^{2j+2,n+1} \right)=\frac{1}{2k}\left(\|\Theta^{2j+2,n+1}\|^2-\|\Theta^{2j+2,n}\|^2  \right), $$
we deduce that 
\begin{equation}
\label{d7}
\begin{aligned}
\|\Theta^{2j+2,n+1}\|^2-\|\Theta^{2j+2,n}\|^2  +2k\gamma \|\nabla \widehat{\Theta}^{2j+2,n+1}\|^2  \leq Ck^{2j+3}\|\widehat{\Theta}^{2j+2,n+1}\|+2k \mu \|\widehat{\Theta}^{2j+2,n+1}\|^2.
 \end{aligned} 
 \end{equation}
This inequality yields 
\begin{equation*}
\|\Theta^{2j+2,n+1}\|^2-\|\Theta^{2j+2,n}\|^2 \leq Ck^{2j+3}\|\widehat{\Theta}^{2j+2,n+1}\|+2k \mu \|\widehat{\Theta}^{2j+2,n+1}\|^2, 
 \end{equation*}and, for $\mu k<2$, we deduce from the inequality
$$\left \|\widehat{\Theta}^{2j+2,n+1}\right \|\leq \frac{1}{2}\left(\left \|\Theta^{2j+2,n+1}\right \|+\left \|\Theta^{2j+2,n}\right \|  \right) $$
that
$$\left \Vert \Theta^{2j+2,n+1} \right \Vert \leq C\frac{k^{2j+3}}{2-\mu k}+\frac{2+\mu k}{2-\mu k}\left \Vert \Theta^{2j+2,n} \right \Vert .$$ It follows by induction on $n$ that 
$$\left \Vert \Theta^{2j+2,n} \right \Vert \leq C\frac{1}{2-\mu k}\left( \frac{2+\mu k}{2-\mu k}\right)^{n-j-1} k^{2j+2}+\left( \frac{2+\mu k}{2-\mu k}\right)^{n-j}\left \Vert \Theta^{2j+2,j} \right \Vert .$$
From the hypothesis (\ref{d2}) and the DCC we have 
\begin{equation}
\label{d8}
\left \Vert \Theta^{2j+2,j} \right \Vert \leq \left \Vert\bar u^{2j+2,j}-\bar u(t_j)\right \Vert+ \left \Vert \Gamma^j \left( \bar u^{2j,j}-\bar u(t_j) \right)  \right \Vert\leq Ck^{2j+2},
\end{equation}
where $C$ is a constant independent from $k$. Moreover, the sequence $\left\lbrace \left( \frac{2+\mu k}{2-\mu k}\right)^{n}  \right\rbrace_n $  is bounded above by  $\exp(2\mu T/(2-\varepsilon))$, for $0\leq \mu k \leq \varepsilon <2$. Whence 

\begin{equation}
\label{d8b}
\Vert \Theta^{2j+2,n} \Vert \leq Ck^{2j+2}.
\end{equation}
Finally, by the triangle inequality, the identity (\ref{d5}) and the DCC, we have 
\begin{equation}
\label{d8c}
\Vert \bar u^{2j+2,n}-\bar u(t_n)\Vert \leq C k^{2j+2}+\left \Vert \Gamma^j(\bar u^{2j,n}-\bar u(t_n))\right \Vert \leq Ck^{2j+2},
\end{equation}
 where $C$ is a constant depending only on $j$, $T$, $\mu$, $M$, a Lipschitz constant on $f$ and $u$ as element of $C^{2j+3}\left( [0,T],H^2(\Omega)\right) $. Substituting (\ref{d8b}) in (\ref{d7}), we have
$$
\|\Theta^{2j+2,n+1}\|^2-\|\Theta^{2j+2,n}\|^2  +2k\gamma \|\nabla \widehat{\Theta}^{2j+2,n+1}\|^2  \leq Ck^{4j+5}, $$and it follows by induction, taking (\ref{d8}) into account, that
$$
\|\Theta^{2j+2,n+1}\|^2+2k\gamma \sum_{i=j}^n\|\nabla \widehat{\Theta}^{2j+2,i}\|^2  \leq Ck^{4j+4}. $$
Inequality  (\ref{d3}) follows from (\ref{d8c}) and the last inequality. 
\end{proof}

The following lemma proves DCC for the implicit midpoint scheme (\ref{c1}) and, together with Theorem \ref{theo:3}, constitutes the main argument for the proof of the general convergence theorem for the DC schemes (\ref{c2}). 
\begin{lemma}
\label{lem:2}
The sequence $\left\lbrace  \bar{u}^{2,n}\right\rbrace _n$ from the scheme (\ref{c1}) approximates $\bar{u}$, exact solution of (\ref{aa1}), with order 2 of accuracy.  Furthermore, if  $u(t)=u_0$ for all $t\in [0,(2p+1)k_0]$, where $k_0$ is the initial time step defined in the introduction ($k_0\mu_0<2$), we have
\begin{equation}
\begin{aligned}
\label{d9}
\|D_-(D_+D_-)^{m}\Theta^{2,n+1} \|+\left \Vert (D_+D_-)^{m}\right. &\left.  {\Theta }^{2,n+1} \right \Vert
+\left \Vert (D_+D_-)^{m} \widehat{\Theta}^{2,n+1} \right \Vert_2\\&+\left(  \gamma k\sum_{i=m}^{n}\|\nabla (D_+D_-)^{m} D\widehat{\Theta}^{2,i+1/2}\|^2\right)^{1/2} \leq Ck^2,
\end{aligned}
\end{equation}
for $m=0,1,2,...,p$, $n=m,m+1,\cdots,N-m$, and $k\leq k_0$, where  $\Theta^{2,n}=\bar{u}^{2,n}-\bar{u}(t_n)$, for $n=0,1,2,\cdots,N$, $\mu_0$ is from inequality (\ref{a3}), and $C$ is a constant depending only on $T$, $\Omega$, $\mu_0$, $k_0$, $M$, the continuity of the source term $S$, the derivatives of $f$ up to order $2m+2$, and the derivatives of $u$ with respect to the time variable $t$ up to order $2m+4$. The parameter $m$ is needed to expect order $2p+2$ of accuracy in $p$ corrections.
\end{lemma}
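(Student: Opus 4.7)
My plan is to prove the order-$2$ estimate first by the standard monotone energy argument for the implicit midpoint rule, then exploit the stationary hypothesis to produce vanishing starting data for all higher-order time differences, and finally to proceed by induction on $m$, exploiting that $D_\pm$, $D$, $E$ and $\Delta$ commute pairwise so that the centered differences of the error satisfy an error equation of the same shape as the original one.

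For the order-$2$ estimate I set $\Theta^{2,n}=u^{2,n}-u(t_n)$ and subtract (\ref{a1}) evaluated at $t_{n+1/2}$ from (\ref{c1}), obtaining
\begin{equation*}
D\Theta^{2,n+1/2}-M\Delta\widehat{\Theta}^{2,n+1}+f(\widehat{u}^{2,n+1})-f(\widehat{u}(t_{n+1}))=\sigma^{2,n+1/2},
\end{equation*}
with $\|\sigma^{2,n+1/2}\|\le Ck^2$ by the central difference expansions (\ref{b6})-(\ref{b7}) at order $p=1$. Taking the $L^2$ inner product with $\widehat{\Theta}^{2,n+1}$, writing the nonlinear difference as $\bigl[\int_0^1 df(\widehat{u}(t_{n+1})+\xi\widehat{\Theta}^{2,n+1})d\xi\bigr]\widehat{\Theta}^{2,n+1}$ and using (\ref{a3}) to bound its contribution below by $-\mu_0\|\widehat{\Theta}^{2,n+1}\|^2$, the condition $k_0\mu_0<2$ allows a discrete Gronwall argument to yield $\|\Theta^{2,n+1}\|\le Ck^2$, exactly in the spirit of Theorem \ref{theo:3}.

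For the higher-order finite-difference bounds I use the stationary hypothesis: on $[0,(2p+1)k_0]$ one has $-M\Delta u_0+f(u_0)=S(\cdot,t)$, so $u^{2,n+1}=u^{2,n}=u_0$ solves (\ref{c1}) whenever $t_{n+1}\le(2p+1)k_0$; selecting this branch (guaranteed by Theorem \ref{theo:2}) forces $\Theta^{2,n}\equiv0$ for $n\le(2p+1)k_0/k$, hence for at least $n\le 2p+1$, so that $(D_+D_-)^m\Theta^{2,n}$ and $D_-(D_+D_-)^m\Theta^{2,n}$ vanish whenever the underlying stencil sits inside this window. I then induct on $m$. Setting $\Psi^{m,n}=(D_+D_-)^m\Theta^{2,n}$ and applying $(D_+D_-)^m$ to the error equation gives
\begin{equation*}
D\Psi^{m,n+1/2}-M\Delta\widehat{\Psi}^{m,n+1}+(D_+D_-)^m[f(\widehat{u}^{2,n+1})-f(\widehat{u}(t_{n+1}))]=(D_+D_-)^m\sigma^{2,n+1/2}.
\end{equation*}
Because $\sigma^{2,n+1/2}=k^2\rho^{n+1/2}$ with $\rho$ a smooth combination of derivatives of $u$ up to order $2m+4$, Result 1 gives $\|(D_+D_-)^m\sigma^{2,n+1/2}\|\le Ck^2$. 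For the nonlinearity I factor $f(\widehat{u}^{2,n+1})-f(\widehat{u}(t_{n+1}))=g^{n+1}\widehat{\Theta}^{2,n+1}$ with $g^{n+1}=\int_0^1 df(\widehat{u}(t_{n+1})+\xi\widehat{\Theta}^{2,n+1})d\xi$, and expand $(D_+D_-)^m$ via a discrete Leibniz identity, separating the principal contribution $g^{n+1}\widehat{\Psi}^{m,n+1}$ from cross terms built from $(D_+D_-)^{m'}$ or $D_-(D_+D_-)^{m'}$ differences of $\widehat{\Theta}^{2,\cdot}$ with $m'<m$, which are $O(k^2)$ by the induction hypothesis while the accompanying differences of $g^{n+1}$ stay uniformly bounded since $\widehat{u}^{2,\cdot}$ and its differences are uniformly bounded in $H^2(\Omega)\hookrightarrow L^\infty(\Omega)$ thanks to Lemma \ref{lem:1} applied at the previous inductive levels.

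The $L^2$ inner product with $\widehat{\Psi}^{m,n+1}$ combined with (\ref{a3}) on the principal nonlinear term produces, exactly as in Step 1, a discrete Gronwall inequality whose solution yields $\|\Psi^{m,n+1}\|\le Ck^2$ and $\bigl(\gamma k\sum_{i=m}^n\|\nabla\widehat{\Psi}^{m,i+1}\|^2\bigr)^{1/2}\le Ck^2$; the $H^2$ bound $\|\widehat{\Psi}^{m,n+1}\|_2\le Ck^2$ then follows by applying Lemma \ref{lem:1} to the elliptic equation for $\widehat{\Psi}^{m,n+1}$ extracted from the $(D_+D_-)^m$ error equation, and $\|D_-(D_+D_-)^m\Theta^{2,n+1}\|=\|D\Psi^{m,n+1/2}\|\le Ck^2$ follows by rearranging that same equation and inserting the $H^2$ bound. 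The remaining bound on $\nabla(D_+D_-)^mD\widehat{\Theta}^{2,i+1/2}$ is obtained by first differencing the error equation once more in time and repeating the energy estimate with $D\widehat{\Psi}^{m,\cdot}$ as test direction. The main obstacle is precisely the nonlinear coupling: the discrete Leibniz expansion of $(D_+D_-)^m[f(\widehat{u}^{2,n+1})-f(\widehat{u}(t_{n+1}))]$ mixes differences of $\Theta$ of all orders $\le m$, so the induction on $m$ must be run in tandem with the $H^2$ control from Lemma \ref{lem:1}, and the Gronwall argument can only be closed thanks to the sharp assumption $k_0\mu_0<2$ built into the hypotheses.
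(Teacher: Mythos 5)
Your proposal follows essentially the same route as the paper: order two via the monotone energy/Gronwall argument of Theorem \ref{theo:3}, vanishing starting differences from the stationary window, induction on $m$ with a discrete Leibniz/Fa\`a di Bruno expansion of the differenced nonlinearity whose principal term is absorbed by (\ref{a3}) and whose cross terms are $O(k^2)$ by the inductive hypothesis and the $L^\infty$ control coming from Lemma \ref{lem:1} and $H^2(\Omega)\hookrightarrow L^\infty(\Omega)$, then elliptic regularity for the $H^2$ bound and one further time difference for the gradient term. The only minor divergences (obtaining the odd-order $L^2$ bound by rearranging the equation rather than by a second energy estimate, and making explicit the choice of the stationary branch of the scheme) do not change the substance of the argument.
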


\begin{proof} Identity (\ref{d4}) for $j=0$ and $\Gamma^0E\bar{u}^{0,n+1/2}=0=\left(\Gamma^0-\Lambda^0 \right)D\left( \bar{u}^{0,n+1/2}-\bar{u}(t_{n+1/2})\right)$ implies that the sequence $\left\lbrace   \bar{u}^{2,n} \right\rbrace _n$ from the scheme (\ref{c1}) approximates $\bar{u}$ with order 2 of accuracy in time, and
\begin{equation}
\label{d10}
\|\Theta^{2,n}\|^2+\gamma k\sum_{i=0}^n\|\nabla \widehat{\Theta}^{2,i}\|^2\leq Ck^{4}, \mbox{ for }n=0,1,\cdots,N,
\end{equation}
where $C$ is a constant depending only on $T$, $\Omega$, a Lipschitz constant on $f$ and the derivatives of $u\in C^3\left( [0,T],H^2(\Omega) \right) $. To prove (\ref{d9}) we proceed by induction on the integer $m$.
\vspace*{.25cm}

\noindent
1)~~The case $m=0$. 

Combining (\ref{aa1}) and (\ref{c1}), we obtain the identity
\begin{equation}
\label{d11}
D\Theta^{2,n+1/2}-M\Delta \widehat{\Theta}^{2,n+1}+h(t_{n+1})=w^{2,n+1/2},
\end{equation}where
$$h(t_{n+1})=f\left(  E\bar{u}^{2,n+1/2}+\varphi(t_{n+1/2})\right) -f\left( E\bar{u}(t_{n+1/2})+\varphi(t_{n+1/2}) \right) =\int_0^1df\left(K^{n+1}_1\right)(\widehat{\Theta}^{2,n+1})d\tau_1,$$
with
$$K_1^{n+1}=E\bar{u}(t_{n+1/2})+\varphi(t_{n+1/2})+\tau_1\widehat{\Theta}^{2,n},$$and
$$
\begin{aligned}
w^{2,n+1/2}=\left[ \bar{u}'(t_{n+1/2})-D\bar{u}(t_{n+1/2})\right]& -M\Delta \left( \bar{u}(t_{n+1/2})-E\bar{u}(t_{n+1/2}) \right)\\& -\left[ f\left( \bar{u}(t_{n+1/2})+\varphi(t_{n+1/2})\right) -f\left( E\bar{u}(t_{n+1/2}) +\varphi(t_{n+1/2})\right)\right].
\end{aligned}$$
Applying $D_+$ to (\ref{d11}), we obtain 
\begin{equation*}
DD_+\Theta^{2,n+1/2}-M\Delta D_+ \widehat{\Theta}^{2,n+1}+D_+h(t_{n+1})=D_+w^{2,n+1/2},
\end{equation*}
and the inner product of this identity with $D_+\widehat{\Theta}^{2,n+1}$ yields
\begin{equation}
\begin{aligned}
\label{d12}
\|D_+\Theta^{2,n+1}\|^2-\|D_+\Theta^{2,n}\|^2+2\gamma k\|\nabla D_+ \widehat{\Theta}^{2,n+1}\|^2\leq 
2k\left( -D_+h(t_{n+1})+D_+w^{2,n+1/2},D_+\widehat{\Theta}^{2,n+1} \right) .
\end{aligned}
\end{equation}
We can write 
\begin{equation}
\label{d13}
\begin{split}
D_+h(t_{n})=\int_0^1df\left(K^{n+1}_1\right)(D_+\widehat{\Theta}^{2,n})d\tau_1+\int_0^1\int_0^1d^2f\left(K^{n}_2\right)\left( D_+K_1^{n},\widehat{\Theta}^{2,n}\right) d\tau_1d\tau_2,
\end{split}
\end{equation}where, for $n+i\leq N$, we have
\begin{equation}
\label{d14}
K_{i+1}^{n}=K^n_{i}+\tau_{i+1}(K_{i}^{n+1}-K_{i}^{n})=K_1^n+\sum_{l=1}^i \sum_{2\leq i_1<\cdots <i_l \leq i+1}\tau_{i_1}\cdots\tau_{i_l}k^lD_+^lK_1^n.
\end{equation}
The scheme (\ref{c1}) can be transformed into equations (\ref{c3})-(\ref{c4}), substituting $k$ by $k/2$ and choosing $u=E\bar{u}^{2,n+1}$, $\sigma=\varphi( t_{n+1/2})$ and $v=(k/2)\tilde{S}(t_{n+1/2})+\bar{u}^{2,n}$. It follows from (\ref{c5}), inequality (\ref{d10}) and the triangle inequality that 
\begin{equation*}
\begin{aligned}
 \|E\bar{u}^{2,n+1/2}\|_2\leq C \left(  \|S(t_{n-\frac{1}{2}})\|\right.  &\left. +\|D_-\Theta^{2,n}\|+\| \widehat{\Theta}^{2,n+1}\|_1+\|D_-\bar u(t_{n})\|+\| E\bar {u}(t_{n+1/2})\|_1+\|\varphi(t_{n+1/2})\|_2\right. \\&\left. +\|f( \varphi(t_{n+1/2})) \|_1 \right)^\nu \leq C ,\end{aligned}
\end{equation*}where $C$ is a constant depending only on $\Omega$, the matrix $M$ and $\mu_0$. From inequalities (\ref{b3}), (\ref{d10}) and the Sobolev embedding $H^2(\Omega) \hookrightarrow C^0(\overline{\Omega})$, the last inequality implies the existence of a real $R>0$, depending only on $T$, $\Omega$, the regularity of $S$, the first derivative of $f$, and the second derivative of $u$ with respect to $t$, such that
\begin{equation}
\label{d15} \|K^n_i\|_\infty \leq R, \mbox{ for } i=1,2,\cdots,2p+1.
\end{equation}
From the condition (\ref{a3}) we have 
\begin{equation}
\label{d16}
\left(df\left(K^{n}_1\right)(D_+\widehat{\Theta}^{2,n}),D_+\widehat{\Theta}^{2,n}  \right)\geq -\mu_0 \|D_+\widehat{\Theta}^{2,n}\|^2.
\end{equation}
From (\ref{d15}) and (\ref{b3}) we have, for almost every $x\in \Omega$,
$$
\begin{aligned}
\displaystyle \left \vert d^2f\left(K^{n}_2\right)\left( D_+K_1^{n},\widehat{\Theta}^{2,n+1}\right)(x) \right  \vert & \leq \max_{|y|\leq R} \left \vert d^2f(y)\right \vert |D_+K^n_1(x)||\widehat{\Theta}^{2,n+1}(x)|\\
& \leq C\left(|\widehat{\Theta}^{2,n+1}(x)|+|D_+\widehat{\Theta}^{2,n+1}(x)||\widehat{\Theta}^{2,n+1}(x)| \right). 
\end{aligned}
$$
Therefore,
$$\displaystyle \left \Vert d^2f\left(K^{n}_2\right)\left( D_+K_1^{n},\widehat{\Theta}^{2,n+1}\right) \right  \Vert \leq C \left(\|\widehat{\Theta}^{2,n+1}\|+\| D_+\widehat{\Theta}^{2,n+1}\|_{L^4(\Omega)}\|\widehat{\Theta}^{2,n+1}\|_{L^4(\Omega)}  \right),$$
and we deduce from the Sobolev embedding $H^1(\Omega) \hookrightarrow L^4(\Omega)$ that 
\begin{equation}
\label{d16b}
\left \Vert d^2f\left(K^{n}_2\right)\left( D_+K_1^{n},\widehat{\Theta}^{2,n+1}\right) \right  \Vert \leq C \left(\|\widehat{\Theta}^{2,n+1}\|+\|D_+\widehat{\Theta}^{2,n+1}\|_1\| \widehat{\Theta}^{2,n+1}\|_1  \right).
\end{equation}
This inequality and (\ref{d16}) together with the Cauchy-Schwartz inequality yield
\begin{equation}
\label{d17}
\begin{aligned}
-k( D_+h(t_{n+1}),D_+\widehat{\Theta}^{2,n+1} ) &\leq k\mu_0\|D_+\widehat{\Theta}^{2,n+1}\|^2 +Ck\|D_+\widehat{\Theta}^{2,n+1}\|\left(\|\widehat{\Theta}^{2,n+1}\|+ \|D_+\widehat{\Theta}^{2,n+1}\|_1\|\widehat{\Theta}^{2,n+1}\|_1\right)\\&\leq k\mu_0\|D_+\widehat{\Theta}^{2,n+1}\|^2 +\frac{1}{2}\gamma k\|\nabla D_+\widehat{\Theta}^{2,n+1}\|^2 \\&\quad+Ck\|D_+\widehat{\Theta}^{2,n+1}\|\left[\|\widehat{\Theta}^{2,n+1}\|+ \|D_+\widehat{\Theta}^{2,n+1}\|\left(  \|\widehat{\Theta}^{2,n+1}\|_1+ \|\widehat{\Theta}^{2,n+1}\|_1^2\right) \right]
\end{aligned}
\end{equation}where we have used the Cauchy inequality with $\varepsilon =\gamma /2$:
$$
\begin{aligned}
\|D_+\widehat{\Theta}^{2,n+1}\|\|\nabla D_+\widehat{\Theta}^{2,n+1}\|\| \widehat{\Theta}^{2,n+1}\|_1\leq \frac{\gamma}{2}\|\nabla D_+\widehat{\Theta}^{2,n+1}\|^2+\frac{1}{2\gamma}\|D_+\widehat{\Theta}^{2,n+1}\|^2\|\widehat{\Theta}^{2,n+1}\|^2_1.
\end{aligned}$$
According to (\ref{d10}), we have 
\begin{equation}
\label{d17a}
\|D_+\widehat{\Theta}^{2,n+1}\|\left(  \|\widehat{\Theta}^{2,n+1}\|_1+ \|\widehat{\Theta}^{2,n+1}\|_1^2\right)\leq k^{-1}\left(\| \widehat{\Theta}^{2,n+2}\|+\|\widehat{\Theta}^{2,n+1}\| \right)\left(k^2+k^{3/2}+k^4+k^3 \right) \leq Ck^{5/2}.
\end{equation}
From Taylor's formula with integral remainder we can write 
$$w^{2,n+1/2}=k^2g(t_{n+1}),$$
where, according to  (\ref{b3}), we have
\begin{equation}
\label{d17b}\Vert  D_+^{m_1}D_-^{m_2} g(t_{n})\Vert \leq C, ~\mbox{ for }~ m_2\leq n\leq N-m_1,
\end{equation}
for each nonnegative integers $m_1$ and $m_2$ such that $m_1+m_2\leq 2p+1$. $C$ is a constant depending only on $T$, the derivatives of $f$ up to order $m_1+m_2+1$, and the norm of $u$ in  $C^{m_1+m_2+3}\left([0,T],H^2(\Omega)\right)$. It follows from Cauchy-Schwartz inequality that 
$$\left \vert\left( kD_+w^{2,n+1/2},D_+\widehat{\Theta}^{2,n+1} \right)\right \vert \leq Ck^3\|D_+\widehat{\Theta}^{2,n+1}\|. $$
Substituting the last inequality and the inequality (\ref{d17}) in (\ref{d12}), taking (\ref{d10}) and (\ref{d17a}) into account,  we deduce that 
\begin{equation}
\label{d18}
\begin{aligned}
\|D_+\Theta^{2,n+1}\|^2-&\|D_+\Theta^{2,n}\|^2+\gamma k\|\nabla D_+ \widehat{\Theta}^{2,n+1}\|^2&\leq 2k\mu_0\|D_+\widehat{\Theta}^{2,n+1}\|^2+Ck^3\|D_+\widehat{\Theta}^{2,n+1}\|,
\end{aligned}
\end{equation}
where $C$ is a constant depending only on $T$, $\Omega$, $S$, the second derivative of $f$ and $u\in C^4([0,T],H^2(\Omega))$. This inequality yields 
\begin{equation*}
\|D_+\Theta^{2,n+1}\|-\|D_+\Theta^{2,n}\|\leq k\mu_0 \|D_+\widehat{\Theta}^{2,n+1}\| +Ck^3.
\end{equation*}
 Since $k\mu_0 \leq k_0\mu_0 <2$, it follows by induction that
 $$\|D_+\Theta^{2,n}\|\leq Ck^2\left( \frac{2+k\mu_0}{2-k\mu_0}\right)^n+\left(  \frac{2+k\mu_0}{2-k\mu_0}\right)^n\|D_+\Theta^{2,1}\|.$$
The condition $u(t_n)=u_0$, for $0\leq t_n\leq (2p+1)k_0$, implies $\|D_+\Theta^{2,1}\|=0$. Whence 
\begin{equation}
\label{d18b}\|D_-\Theta^{2,n}\|=\|D_+\Theta^{2,n-1}\|\leq Ck^2, \mbox{ for } n=1,2,\cdots, N.
\end{equation}
Substituting (\ref{d18b}) in the right hand side of (\ref{d18}), we deduce that 
\begin{equation}
\label{d18c}
\|D_-\Theta^{2,n}\|^2+\gamma k\sum_{l=0}^n\|\nabla D_+\widehat{\Theta}^{2,l}\|^2\leq Ck^4.
\end{equation}
On the other hand, by the elliptic regularity results applied to (\ref{d11}), we deduce from (\ref{d15}), (\ref{d17b}) for $m_1=m_2=0$, and (\ref{d18b}) that
$$\|\widehat{\Theta}^{2,n+1}\|_2\leq C\left(\|D_-\Theta^{2,n+1}\|+\|h(t_{n+1})\|+\|w^{2,n+1/2}\| \right)\leq Ck^2.$$
Inequality (\ref{d9}) for $m=0$ holds from (\ref{d10}), (\ref{d18c}) and the last inequality. 

\vspace*{.7cm}
\noindent
2)~~Inequality (\ref{d9}) for $m+1$, assuming that it holds for arbitrary $m\leq p-1$.

We apply $(D_+D_-)^{m+1}$ to the identity (\ref{d11}) and take the inner product of the resulting identity with $(D_+D_-)^{m+1}\widehat{\Theta}^{2,n+1}$ to obtain, as in (\ref{d12}),
\begin{equation}
\begin{aligned}
\label{d19}
\|(D_+D_-)^{m+1}\Theta^{2,n+1}\|^2&-\|(D_+D_-)^{m+1}\Theta^{2,n}\|^2+2\gamma k\|\nabla (D_+D_-)^{m+1} \widehat{\Theta}^{2,n+1}\|^2\\&\leq 
2k\left( -(D_+D_-)^{m+1}h(t_{n+1})+(D_+D_-)^{m+1}w^{2,n+1/2},(D_+D_-)^{m+1}\widehat{\Theta}^{2,n+1} \right) .
\end{aligned}
\end{equation}
As in \cite{koyaguerebo2021arbitrary} we can write
\begin{equation}
\label{d20}
\begin{split}
D_+^sh(t_n)=\sum_{i=1}^{s+1}\sum_{|\alpha_i|=s} L^{n,s}_{i,\alpha_i} ,\mbox{ for } s=1,2,...,2p+1, \mbox{ and }n\leq N-s,
\end{split}
\end{equation}
where $\alpha_i=(\alpha_i^1,\cdots,\alpha_i^{i-1},\alpha_i^i) \in \left\lbrace 1,2,\cdots, s\right\rbrace ^{i-1}\times \left\lbrace 0,1,\cdots, s-i+1\right\rbrace $. $L^{n,s}_{i,\alpha_i}$ is a linear combination, with properly chosen coefficients, of the quantities
\begin{equation*}
L^{n,s}_{i,\alpha_i,\beta_i} =\int_{[0,1]^i}d^iF(K_i^{n+s+1-i})\left( D_+^{\alpha_i^{i-1}}K_{i-1}^{n+\beta_{i}^{i-1}},\cdots,D_+^{\alpha^1_{i}}K_{1}^{n+\beta_i^1},D_+^{\alpha_i^i}\widehat{\Theta}^{2,n+\beta_i^i}\right)d\tau^i,
\end{equation*}
where $\beta_i=(\beta_i^1,\cdots,\beta_i^{i-1},\beta_i^i) \in \left\lbrace 1,2,\cdots, s\right\rbrace ^{i-1}\times \left\lbrace 0,1,\cdots, s-i+1\right\rbrace $ with $\beta_i^l+\alpha_i^{l}\leq s-l+1$, for $l=1, \cdots, i$, and $d\tau^i=d\tau_1\cdots d\tau_i$.
From (\ref{d15}) and the regularity of $f$ we have
\begin{equation}
\label{d21}\left \Vert d^i f\left( K_{i}^{n} \right) \right \Vert_{\infty} \leq C_i,~~\mbox{ for } i=1,2,...,2p+1,0\leq n\leq N-i+1,
\end{equation}
where $C_i$ is a constant depending only on $T$, the i-th derivative of $f$  and the second derivative of $u$. From the induction  hypothesis (\ref{d9}), the Sobolev embedding $H^2(\Omega)\hookrightarrow L^\infty(\Omega)$, and inequality (\ref{b3}), we have
\begin{equation}
\label{d22}\Vert D^l_+K_{i}^{n}\Vert_\infty \leq C, \mbox{ for } 1\leq l\leq 2m+2,0\leq n\leq N-i-l+1,
\end{equation}
and 
\begin{equation}
\label{d23}
\Vert D_+^l \widehat{\Theta}^{2,n}\Vert \leq Ck^2, \mbox{ for } 1\leq l\leq 2m+1,0\leq n\leq N-l .
\end{equation}

\noindent
-\,\, For $i=1$ we have 
\begin{equation*}
L^{n,s}_{1,\alpha_1} =\int_0^1df(K_1^{n+s})\left( D_+^{s}\widehat{\Theta}^{2,n}\right)d\tau,
\end{equation*}
and, by taking $s=2m+2$, it follows from (\ref{a3}) that 
\begin{equation}
\label{d24} \left( L^{n-m,2m+2}_{1,\alpha_1}, (D_+D_-)^{m+1}\widehat{\Theta}^{2,n+1}\right)\geq -\mu_0\| (D_+D_-)^{m+1}\widehat{\Theta}^{2,n+1}\|^2
\end{equation}
since
$$ D_+^{2m+2}\widehat{\Theta}^{2,n-m}=(D_+D_-)^{m+1}\widehat{\Theta}^{2,n+1}.$$

\noindent
-\,\, For $i=2$ and $|\alpha_2|\leq 2m+2$, we have $1\leq \alpha_2^1\leq 2m+2$ and $0\leq \alpha_2^2\leq 2m+1$. 
It follows by the triangle inequality, the inequalities (\ref{b3}) and (\ref{d21})-(\ref{d23}) that
 \begin{equation}
 \label{d25}\|L^{n,s^*}_{2,\alpha_2,\beta_2}\|\leq \left \Vert d^2 f\left( K_{2}^{n+s^*-1} \right) \right \Vert_{\infty}  \|D_+^{\alpha_2^{1}}K_{1}^{n+\beta_{2}^{1}}\|_\infty \Vert  D_+^{\alpha_2^2}\widehat{\Theta}^{2,n}\Vert \leq Ck^2, \mbox{ for } s^*\leq 2m+2.
 \end{equation}

\noindent 
-\,\, For $i\geq 3$ and $|\alpha_i|\leq 2m+3$, we have $1\leq \alpha_i^l \leq 2m+2$, for $l=1,2,\cdots, i-1$, and $0\leq \alpha_i^i \leq 2m+1$. It follows by the triangle inequality,  the inequalities (\ref{b3}) and (\ref{d21})-(\ref{d23}) that, for $s^*\leq 2m+3$,
\begin{equation}
 \label{d26}
 \begin{aligned}
\|L^{n,s^*}_{i,\alpha_i,\beta_i}\|\leq \|d^if(K_i^{n+s^*+1-i})\|_{\infty} \|D_+^{\alpha_i^i}\widehat{\Theta}^{2,n+\beta_i^i}\|\prod_{l=1}^{i-1}\| D_+^{\alpha_i^{l}}K_{l}^{n+\beta_{i}^{l}}\|_{\infty}\leq Ck^2.
\end{aligned}
 \end{equation}
From the identity (\ref{d20}), inequalities (\ref{d24})-(\ref{d26}) yield
\begin{equation}
\label{d27}
\begin{aligned}
\left( -(D_+D_-)^{m+1}h(t_{n+1}),(D_+D_-)^{m+1}\widehat{\Theta}^{2,n+1}\right) &\leq \mu_0\|(D_+D_-)^{m+1} \widehat{\Theta}^{2,n+1}\|^2+ Ck^2\|(D_+D_-)^{m+1}\widehat{\Theta}^{2,n+1}\|.
\end{aligned}
\end{equation}
From inequality (\ref{d17b}) we have
\begin{equation}
\label{d28}
\|(D_+D_-)^{m+1}w^{2,n+1/2} \|\leq Ck^2.
\end{equation}
Substituting (\ref{d27}) and (\ref{d28}) in (\ref{d19}), we obtain
\begin{equation}
\begin{aligned}
\label{d29}
\|(D_+D_-)^{m+1}\Theta^{2,n+1}\|^2&-\|(D_+D_-)^{m+1}\Theta^{2,n}\|^2+2\gamma k\|\nabla (D_+D_-)^{m+1}\widehat{\Theta}^{2,n+1}\|^2\\&\leq 2k\mu_0 \| (D_+D_-)^{m+1}\widehat{\Theta}^{2,n+1}\|^2+Ck^3\|(D_+D_-)^{m+1}\widehat{\Theta}^{2,n+1}\|.
\end{aligned}
\end{equation}
Proceeding as in (\ref{d18}), we deduce by induction that  
$$\|(D_+D_-)^{m+1}\Theta^{2,n}\|\leq \left( Ck^2+\|(D_+D_-)^{m+1}\Theta^{2,m+1}\|\right)\left( \frac{2+k\mu_0}{2-k\mu_0}\right)^{n-m-1}. $$
Since $u(t_n)=u_0$ for $0\leq t_n\leq (2p+1)k_0$, we have $\|(D_+D_-)^{m+1}\Theta^{2,m+1}\|=0$, for $m\leq p-1$. Whence
\begin{equation}
\label{d29b}
\|(D_+D_-)^{m+1}\Theta^{2,n}\|\leq Ck^2, \mbox{ for } n=m+1,m+2,\cdots, N-m-1.
\end{equation}
Substituting (\ref{d29b}) in the right hand side of (\ref{d29}), we deduce by induction that
$$\|(D_+D_-)^{m+1}\Theta^{2,n} \|^2+2\gamma k\sum_{i=m+1}^{n}\|\nabla (D_+D_-)^{m+1} \widehat{\Theta}^{2,i}\|^2 \leq Ck^4.$$
It is immediate from (\ref{d21})-(\ref{d23}) that
\begin{equation*}
\|L^{n,2m+1}_{1,\alpha_1} \|\leq \|df(K_1^{n+2m+1})\|_{\infty} \|D_+^{2m+1}\widehat{\Theta}^{2,n}\|\leq Ck^2.
\end{equation*}
Therefore, applying $D_-(D_+D_-)^{m}$ to (\ref{d11}), we deduce from the elliptic regularity inequality,  the identity (\ref{d20}), the last inequality, the inequalities (\ref{d25})-(\ref{d26}), (\ref{d29b}) and (\ref{d17b}) that 
$$ \Vert D_-(D_+D_-)^{m} \widehat{\Theta}^{2,n+1}  \Vert_2 \leq \|D_-(D_+D_-)^m \left( D\Theta^{2,n+1/2}+ h(t_{n+1})+w^{2,n+1} \right) \| \leq Ck^2.$$
It follows that
\begin{equation}
\begin{aligned}
\label{d30}
\|(D_+D_-)^{m+1}\Theta^{2,n+1} \|+\left( \gamma k\sum_{i=m+1}^{n}\|\nabla (D_+D_-)^{m+1} \widehat{\Theta}^{2,i}\|^2\right)^{1/2}+\Vert D_-(D_+D_-)^{m} \widehat{\Theta}^{2,n+1} \Vert_2 \leq Ck^2.
\end{aligned}
\end{equation}
Otherwise, applying $D_+(D_+D_-)^{m+1}$ to (\ref{d11}), the same reasoning, taking the induction hypothesis and the inequality (\ref{d30}) into account, yields  (\ref{d9}) for $m+1$. Finally, we deduce by induction that Lemma \ref{lem:2} is true for each $m=0,1,\cdots,  p$. 
\end{proof}

The following theorem shows DCC for the schemes (\ref{c1}) and (\ref{c2}) .
\begin{theorem}
\label{theo:4} Suppose that the exact solution ${u}$ of (\ref{a1}) satisfies ${u}(t)=u_0$ for each $t\in [0,(2p+1)k_0]$,  where  $k_0>0$ is a fixed real such that $k_0\mu_0<2$. Then, for $k\leq k_0$, each sequence $\left\lbrace \bar u^{2j,n}\right\rbrace _n$, $j=1,2,...,p+1$, from the schemes (\ref{c1}) or (\ref{c2})  approximates $\bar{u}$ with order $2j$ of accuracy in time, and we have the estimate
\begin{equation}
\begin{aligned}
\label{d31}
\left \Vert (D_+D_-)^{m} E\left( \bar{u}^{2j,n+1/2}-\bar{u}(t_{n+1/2})\right) \right \Vert_2+\sqrt{k\sum_{i=m}^{n}\|\nabla D_-(D_+D_-)^{m} E\left( \bar{u}^{2j,i+1/2}-\bar{u}(t_{i+1/2})\right)  \|^2}\\+
\|D_-(D_+D_-)^{m}\left( \bar{u}^{2j,n+1}-\bar{u}(t_{n+1})\right) \|+\left \Vert (D_+D_-)^{m} \left( \bar{u}^{2j,n+1}-\bar{u}(t_{n+1})\right)\right \Vert \leq Ck^{2j},
\end{aligned}
\end{equation}
for $m=0,1,...,p-j$ and $n=m+j-1,m+j,..., N-j-m$, where $\mu_0$ is from (\ref{a3}), and $C$ is a constant depending only on $m$, $T$, $\mu_0$, $k_0$, $M$, the function $S$, and the derivatives of  $f$ and $u=u(t)$ up to order $2m+2j$ and $2m+2j+2$, respectively. The parameter $m$ is needed to expect order $2p+2$ of accuracy in $p$ corrections.
\end{theorem}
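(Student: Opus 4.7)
The plan is to proceed by strong induction on $j$, with the base case $j=1$ being exactly Lemma \ref{lem:2}. For the inductive step, assume estimate (\ref{d31}) holds for all admissible $m$ for the sequence $\{u^{2j,n}\}$; I will deduce it for $\{u^{2j+2,n}\}$. First, taking $m \geq 1$ in the inductive hypothesis and invoking Remark \ref{rmk:4} (which translates bounds on $(D_+D_-)^i$ into bounds on $\Gamma^j$ and $\Lambda^j$), I verify that $\{u^{2j,n}\}$ satisfies the DCC. The stationary hypothesis $u(\cdot,t)=u_0$ on $[0,(2p+1)k_0]$ forces the starting values $u^{2j+2,n}$ produced by scheme (\ref{c2b}) to equal $u_0$ exactly for $0\leq n\leq j$, so hypothesis (\ref{d2}) of Theorem \ref{theo:3} is trivially satisfied. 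Theorem \ref{theo:3} then delivers the $m=0$ piece of (\ref{d31}) for $j+1$.

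Next, I extend to arbitrary $m\in\{0,1,\ldots,p-j-1\}$ by a nested induction on $m$, following the template of the proof of Lemma \ref{lem:2}. Subtracting (\ref{a1}) from (\ref{c2}) gives the error identity
\begin{equation*}
D\Theta^{2j+2,n+1/2} - M\Delta\widehat{\Theta}^{2j+2,n+1} + h_j(t_{n+1}) = w^{2j+2,n+1/2},
\end{equation*}
where $h_j(t_{n+1})=f(\widehat{u}^{2j+2,n+1}-\Gamma^j\widehat{u}^{2j,n+1})-f(\widehat{u}(t_{n+1})-\Gamma^j\widehat{u}(t_{n+1}))$ and the truncation error $w^{2j+2,n+1/2}$ satisfies $\|(D_+D_-)^m w^{2j+2,n+1/2}\|\leq Ck^{2j+2}$ by the central finite difference formulas (\ref{b6})--(\ref{b7}) together with (\ref{b3}). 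Applying $(D_+D_-)^m$ to this identity and taking the inner product with $(D_+D_-)^m\widehat{\Theta}^{2j+2,n+1}$ produces
\begin{equation*}
\|(D_+D_-)^m\Theta^{2j+2,n+1}\|^2 - \|(D_+D_-)^m\Theta^{2j+2,n}\|^2 + 2\gamma k\|\nabla (D_+D_-)^m\widehat{\Theta}^{2j+2,n+1}\|^2
\end{equation*}
bounded by twice $k$ times the inner product of the right-hand terms with $(D_+D_-)^m\widehat{\Theta}^{2j+2,n+1}$. The linearization of $(D_+D_-)^m h_j$ via a Faà di Bruno--type expansion analogous to (\ref{d20}) isolates a leading term $\int_0^1 df(\cdot)(D_+D_-)^m\widehat{\Theta}^{2j+2,n+1}\,d\tau$ handled by monotonicity (\ref{a3}) yielding $-\mu_0\|(D_+D_-)^m\widehat{\Theta}^{2j+2,n+1}\|^2$, together with higher-derivative remainders bounded in $L^2$ by $Ck^{2j+2}$ using the inductive hypothesis on $\{u^{2j,n}\}$ (to control the $\Gamma^j\widehat{u}^{2j,n}$ factors) and the already-settled smaller-$m$ bounds on $\{u^{2j+2,n}\}$.

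The stationary hypothesis kills the initial seed term $(D_+D_-)^m\Theta^{2j+2,\cdot}$ at the left endpoint of the recursion, since $u^{2j+2,n}=u_0=u(t_n)$ for $n$ small enough. A discrete Gronwall argument, identical in structure to the one following (\ref{d29}) and exploiting $k\mu_0<2$, then closes the induction on $n$ and yields the $L^2$ and energy bounds
\begin{equation*}
\|(D_+D_-)^m\Theta^{2j+2,n+1}\|^2 + \gamma k\sum_{i=m}^n\|\nabla(D_+D_-)^m\widehat{\Theta}^{2j+2,i}\|^2 \leq Ck^{4j+4}.
\end{equation*}
The estimate on $\|D_-(D_+D_-)^m\Theta^{2j+2,n+1}\|$ follows by applying $D_-(D_+D_-)^m$ to the error identity and reusing these bounds, while the $H^2$ estimate for $(D_+D_-)^m\widehat{\Theta}^{2j+2,n+1}$ comes from elliptic regularity applied to the same identity together with the $L^2$ bounds just obtained and $\|(D_+D_-)^m h_j(t_{n+1})\|\leq Ck^{2j+2}$.

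The main obstacle is the combinatorial bookkeeping needed to bound $(D_+D_-)^m h_j(t_{n+1})$, because $h_j$ depends on \emph{both} the unknown $u^{2j+2,n}$ and the previous-stage $u^{2j,n}$ through the correction $\Gamma^j\widehat{u}^{2j,n+1}$. Unlike Lemma \ref{lem:2}, here every discrete derivative may fall on either factor, and the higher-derivative factors $D_+^l K_i^n$ must be controlled using the inductive hypothesis on $\{u^{2j,n}\}$ via inequality (\ref{b3}) applied to $\Gamma^j\widehat{u}^{2j,n}$, the Sobolev embedding $H^2(\Omega)\hookrightarrow L^\infty(\Omega)$, and the Poincaré inequality. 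Once the uniform $L^\infty$ bound analogous to (\ref{d15}) is established for the arguments of $d^if$, the estimates (\ref{d25})--(\ref{d26}) transfer essentially verbatim, and the remaining Gronwall step runs as in Lemma \ref{lem:2}. Finally, the conclusion for $j+1$ follows by completing the nested induction on $m$ up to $m=p-j-1$, which closes the outer induction on $j$.
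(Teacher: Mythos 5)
Your proposal is correct and follows essentially the same route as the paper: induction on $j$ with Lemma \ref{lem:2} as the base case, DCC verified via Remark \ref{rmk:4}, Theorem \ref{theo:3} for the $m=0$ accuracy statement, and then a rerun of Lemma \ref{lem:2}'s nested induction on $m$ with $h$, $w^{2,n+1/2}$ and $k^2$ replaced by $H$, $w^{2j+2,n+1/2}$ and $k^{2j+2}$. The only small imprecision is attributing the bound on $(D_+D_-)^m w^{2j+2,n+1/2}$ solely to the central difference formulas, whereas the term $(\Lambda^j-\Gamma^j)D(u^{2j,n+1/2}-u(t_{n+1/2}))$ it contains requires the induction hypothesis on the previous stage (which you do invoke elsewhere), so this does not affect the validity of the argument.
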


\begin{proof} We proceed by induction on $j=1,2,...,p+1$, and the case $j=1$ results from Lemma \ref{lem:2}. Suppose that $\left\lbrace \bar u^{2j,n}\right\rbrace _n$ satisfies (\ref{d31}) up to an arbitrary order $j\leq p$. Let us prove that the theorem is still true for $j+1$. 

Since $\left\lbrace \bar{u}^{2j,n}\right\rbrace _n$ satisfies (\ref{d31}), it also satisfies DCC, and then Theorem \ref{theo:3} together with the condition ${u}(t)=u_0$ in $[0,(2p+1)k_0]$ implies that $\left\lbrace \bar{u}^{2j+2,n}\right\rbrace _n$ approximates $\bar{u}$ with order $2j+2$ of accuracy in time. Therefore, it is enough to establish (\ref{d31}) for $j+1$. 
We can rewrite the identity (\ref{d4}) as follows

\begin{equation}
\label{d32}
\begin{split}
D\Theta^{2j+2,n+1/2}-M\Delta \widehat{\Theta}^{2j+2,n+1}+H(t_{n+1})=w^{2j+2,n+1/2},
\end{split}
\end{equation}where
$$H(t_{n+1})=\int_0^1df \left(E\bar{u}(t_{n+1/2})-\Gamma^j E\bar{u}(t_{n+1/2})+\varphi(t_{n+1/2})+\tau_1 \widehat{\Theta}^{2j+2,n+1}\right)\left(\widehat{\Theta}^{2j+2,n+1}  \right) d\tau_1,$$and
$$
w^{2j+2,n+1/2}=\sigma^{2j+2,n+1/2}+(\Lambda^j -\Gamma^j)D\left( \bar{u}^{2j,n+1/2}-\bar{u}(t_{n+1/2})\right) . $$
Here $\Theta^{2j+2,n+1}$ and $\sigma^{2j+2,n+1/2}$ are as in Theorem \ref{theo:3}. From the central finite difference (\ref{b6})-(\ref{b7}) and the regularity of $u$ with respect to $t$, we can write  
\begin{equation*}\sigma^{2j+2,n+1/2} =k^{2j+2}G(t_{n+1/2}),
\end{equation*}where 
\begin{equation*}
\Vert  D_+^{m_1}D_-^{m_2} G(t_{n})\Vert \leq C, ~\mbox{ for }~ m_2\leq n\leq N-m_1,
\end{equation*}
for each nonnegative integers $m_1$ and $m_2$ such that $m_1+m_2\leq 2p-2j+1$. $C$ is a constant depending only on $T$, the derivatives of $f$ up to order $m_1+m_2+2j+1$ and the norm of $u$ in  $C^{m_1+m_2+2j+3}\left([0,T],H^2(\Omega)\right)$. On the other hand, from the induction hypothesis and Remark \ref{rmk:4}, we immediately have
$$\|D_-(D_+D_-)^{m}(\Lambda^j -\Gamma^j)(\bar u^{2j,n}-\bar u(t_{n}))\|\leq Ck^{2j+2}, \mbox{ for } m=0,1,...,p-(j+1).$$
The last two inequalities implies that 
\begin{equation*}
 \Vert  D_+^{m_1}D_-^{m_2} w^{2j+2,n+1/2}\Vert \leq Ck^{2j+2} ,\mbox{ for } m_1+m_2\leq 2p-2j-2,
\end{equation*}
and $ m_2+j\leq n\leq N-m_1-j-1$. Therefore, the reasoning from Lemma \ref{lem:2}, substituting the functions $h$ by $H$, $w^{2,n+1/2}$ by $w^{2j+2,n+1/2}$, $\widehat{\Theta}^{2,n+1}$ by $\widehat{\Theta}^{2j+2,n+1}$ and $k^2$ by $k^{2j+2}$, yields
\begin{equation*}
\begin{aligned}
\|D_-(D_+D_-)^{m}\Theta^{2j+2,n+1} \|+\left \Vert (D_+D_-)^{m} {\Theta }^{2j+2,n+1} \right \Vert
+\left \Vert (D_+D_-)^{m} \widehat{\Theta}^{2j+2,n+1} \right \Vert_2\\+\left(  k\sum_{i=m}^{n}\|\nabla D(D_+D_-)^{m} \widehat{\Theta}^{2j+2,i+1/2}\|^2\right)^{1/2} \leq Ck^{2j+2},
\end{aligned}
\end{equation*}
for $m=0,1,...,p-(j+1)$, and  (\ref{d31}) for $j+1$ follows by the triangle inequality. Inequality (\ref{d31}) then holds for arbitrary integer $j\leq p+1$. 
\end{proof}

\section{Fully discretized schemes and convergence results}
\label{sec:Conv} 
This section is dedicated to the analysis of the full discretization of problem (\ref{aa1}). The fully discrete schemes are constructed as discretizations of the time semi-discrete schemes, given by formulae (\ref{c1}) or (\ref{c2}), via the Galerkin ﬁnite element method. Existence of the fully discrete solutions is proven using an adaptation of the Lemma on zeros of a vector ﬁeld. We prove unconditional convergence of the fully discrete solutions to the exact one. The proof strategy does not use DCC. We simply prove unconditional convergence in space of each fully discrete solution to the corresponding time semi-discrete solution and deduce the global convergence by the triangle inequality. 

 Let $\Gamma$ be the portion of the boundary of $\Omega$ on which Dirichlet boundary conditions are imposed to the exact  solution $u$ of problem (\ref{a1}).  Let $S_h$ be a finite dimensional subspace of $\displaystyle V=\left\lbrace v\in H^1(\Omega)\,|\, v=0 \mbox{ on } \Gamma \right\rbrace $  and $\left\lbrace \phi_i \right\rbrace_{i=1}^{N_h} $ a basis for $S_h$ consisting in continuous piecewise polynomials of degree $r\geq 1$ (see for instance \cite{ern2013theory} for an introduction to finite element subspaces $S_h$; the integer $r$ is related to the regularity of the exact solution of (\ref{a1}) in space). We suppose that there exist an interpolation operator $I^r_h$ from $H^1(\Omega)$ onto $S_h$ and a constant $c>0$ such that $0\leq l\leq r$ implies
\begin{equation}
\label{e1}\|v-I^r_hv\|+h\|\nabla \left( v-I^r_hv \right)  \|\leq ch^{l+1} |v|_{l+1,2,\Omega }, \mbox{  } \forall v\in H^{l+1}(\Omega),
\end{equation}
and
\begin{equation}
\label{e1b}
\|v-I^r_hv\|_{L^4 (\Omega)} +h\|\nabla \left( v-I^r_hv \right)  \|_{L^4 (\Omega)} \leq ch^{l+1} |v|_{l+1,4,\Omega }, \mbox{  } \forall v\in W^{l+1,4}(\Omega),
\end{equation}
where $\vert \cdot \vert_{l+1,\rho,\Omega}$ is the following seminorm in $W^{l+1,\rho}(\Omega)$:
$$\vert v \vert_{l+1,\rho,\Omega}=\sum_{|\alpha|=l+1}\vert \partial^\alpha v\vert_{L^\rho(\Omega)}.$$
We say that $S_h$ satisfies the inverse inequality if
\begin{equation}
\label{e1c}
\|v_h\|_{\infty}\leq ch^{m-d/2}\|v_h\|_{m},\mbox{ ~ } \forall v_h\in S_h,  \mbox{ and } m=0,1.
\end{equation}
The estimates (\ref{e1}) and (\ref{e1b}) hold when $S_h$ is obtained from a shape-regular family of meshes $\left\lbrace  \mathcal{T}_h\right\rbrace_{h>0} $ \cite[Corollary 1.109 \& 1.110 ]{ern2013theory} while (\ref{e1c}) is due to \cite[Theorem 3.2.6]{ciarlet1978finite} or \cite[Lemma 1.142]{ern2013theory} for a family of quasi-uniform meshes. Let $R_h$ be the orthogonal projection of $H^1(\Omega)$ onto $S_h$ with respect to the inner product $(u,v)\mapsto \left( u,v\right) +\left( M\nabla u,\nabla v\right) $ . Proceeding as in \cite[Theorem 1.1]{thomee1984galerkin}, we deduce from (\ref{e1}) that
\begin{equation}
\label{e3}
\Vert R_hv-v\Vert +h\Vert \nabla (R_hv-v)\Vert \leq Ch^{l+1} \|v\|_{H^{l+1}(\Omega)}, \mbox{  } \forall v\in H^{l+1}(\Omega), 0\leq l\leq r.
\end{equation}
Furthermore, if $S_h$ satisfies the inverse inequality (\ref{e1c}), we deduce from (\ref{e3}) and (\ref{e1}) for $l=1$, and (\ref{e1b}) for $l=0$ together with the continuous embedding $H^2(\Omega)\hookrightarrow W^{1,4}(\Omega) \hookrightarrow L^\infty(\Omega)$, that
\begin{equation}
\label{e3b}
\begin{aligned}
\Vert R_h v\Vert_{\infty} &\leq \Vert R_h v-I^r_hv\Vert_{\infty} +\Vert v-I^r_h v\Vert_{\infty} +\Vert v\Vert_{\infty} \leq  ch^{1/2}\Vert v\Vert_2+C\|v\|_2,\,\, \forall v\in H^2(\Omega).
\end{aligned}
\end{equation}

For $j=0,1,2,\cdots, p$ and each positive integer $n\leq N$, we look for a function $ \bar{u}^{2j+2,n}_h \in  H^1(\Omega)$ of the form
\begin{equation}
\label{e4}
\bar{u}^{2j+2,n}_h=\sum_{l=1}^{N_h}U^{2j+2,n}_l\phi_l,
\end{equation}satisfying 

\begin{equation}
\label{e5} 
\begin{aligned} &\left( D\bar{u}^{2j+2,n+1/2}_h-\Lambda^j D\bar{u}^{2j,n+1/2}_h,\phi \right)+\left( M\nabla \left(E{\bar{u}}^{2j+2,n+1/2}_h-\Gamma^j E{\bar{u}}^{2j,n+1/2}_h \right),\nabla\phi \right) \\& +\left( f\left(E{\bar{u}}^{2j+2,n+1/2}_h-\Gamma^j E{\bar{u}}^{2j,n+1/2}_h +\varphi(t_{n+1/2})\right),\phi \right)=\left( \tilde{S}(t_{n+1/2}), \phi \right), \forall \phi \in S_h, \mbox{ and } n\geq j
\end{aligned}
\end{equation}
\begin{equation}
\label{e6}
\bar{u}^{2j+2,0}_h=R_h\left( u_0-\varphi(0)\right) ,
\end{equation}
where $\Lambda^j D\bar{u}^{2j,n+1/2}_h=0=\Gamma^j E\bar{u}^{2j,n+1/2}_h$ if $j=0$. The scheme (\ref{e5})-(\ref{e6}), denoted DC(2j+2), constitutes a full discretization of the problem (\ref{aa1}) with deferred correction in time, at the discrete points $0=t_0<t_1<\cdots <t_N=T$, $t_n=nk$, and finite element in space. For the starting values in  (\ref{e5})-(\ref{e6}), $0\leq n\leq j-1$ , we consider the following scheme which is deduced from (\ref{c2b}):
\begin{equation}
\label{e7}
\begin{aligned} &\left( D\bar{u}^{2j+2,n+1/2}_h-\frac{1}{2j+1}\bar{\Lambda}^j D\bar{\bar{u}}^{2j,n_j+1/2}_h+f\left( E\bar{u}^{2j+2,n+1/2}_h-\bar{\Gamma}^j E\bar{ \bar{u}}^{\,2j,n_j+1/2}_h +\varphi(t_{n+1/2})\right),\phi \right)\\&+\left( M\nabla \left(E\bar{u}^{2j+2,n+1/2}_h-\bar{\Gamma}^j E\bar{\bar{u}}^{\,2j,n_j+1/2}_h \right),\nabla\phi \right)=\left( \tilde{S}(t_{n+1/2}), \phi \right), \forall \phi \in S_h, 
\end{aligned}
\end{equation}
\begin{equation}
\label{e8}
\bar{u}^{2j+2,0}_h=R_h\left( u_0-\varphi(0)\right) .
\end{equation}

The following theorem proves the existence of a solution for the schemes (\ref{e5})-(\ref{e6}).

\begin{theorem}[Existence of a solution for the fully discretized scheme] 
\label{theo:5}We suppose that $k\displaystyle \|\tau\circ \varphi\|_\infty <2$. Then, for each $j=1,2,\cdots$, there exists a sequence $\left\lbrace  \bar{u}^{\,2j,n}_h\right\rbrace_{n=0}^N$ of elements of the form (\ref{e4}) satisfying (\ref{e5})-(\ref{e6}).
\end{theorem}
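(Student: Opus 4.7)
The plan is to transcribe the Schaefer fixed point argument of Lemma~\ref{lem:1} to the finite-dimensional subspace $S_h$, taking advantage of the fact that in finite dimension continuity of the fixed-point map automatically gives compactness. We proceed by double induction: on the correction stage index $j$ (starting from $j=0$), and, within each stage, on the time index $n$.

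First I would reduce one time step to an elliptic fixed-point problem. Assume that $\{u_h^{2i,m}\}$ is known for all $i\le j$ needed by the scheme, and that $u_h^{2j+2,0},\dots,u_h^{2j+2,n}$ are already constructed. Set $w_h := E u_h^{2j+2,n+1/2}=(u_h^{2j+2,n+1}+u_h^{2j+2,n})/2$, so that $u_h^{2j+2,n+1}=2w_h-u_h^{2j+2,n}$. Multiplying (\ref{e5}) by $k/2$ and collecting known quantities into
$$g_h:=\Gamma^j E u_h^{2j,n+1/2}\in S_h,\qquad v_h:=\tfrac{k}{2}s(t_{n+1/2})+u_h^{2j+2,n}+\tfrac{k}{2}\Lambda^j D u_h^{2j,n+1/2}\in S_h,$$
the scheme is equivalent to finding $w_h\in S_h$ such that, for every $\phi\in S_h$,
$$
(w_h,\phi)+\tfrac{k}{2}(M\nabla w_h,\nabla\phi)+\tfrac{k}{2}\bigl(f(w_h-g_h),\phi\bigr)=(v_h,\phi).
$$
The starting-value scheme (\ref{c2b}) fits the same template with $g_h$ and $v_h$ replaced by their barred analogues.

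Next I would define the map $A_h:S_h\to S_h$ by $A_h[z_h]=w_h$, where $w_h\in S_h$ is the unique solution of the \emph{linear} problem
$$
(w_h,\phi)+\tfrac{k}{2}(M\nabla w_h,\nabla\phi)=\bigl(v_h-\tfrac{k}{2}f(z_h-g_h),\phi\bigr),\qquad\forall\phi\in S_h.
$$
Unique solvability in $S_h$ follows from Lax--Milgram (the bilinear form is coercive since $M$ is positive definite), i.e.\ invertibility of a positive-definite mass-plus-stiffness matrix. Continuity of $A_h$ is immediate from the continuity of $f$ acting coordinate-wise and the fact that all norms on the finite-dimensional space $S_h$ are equivalent; compactness is automatic since $\dim S_h<\infty$.

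To apply Schaefer's theorem, I would show the boundedness of
$$\Sigma_h=\{z_h\in S_h\,|\,z_h=\lambda A_h[z_h]\text{ for some }\lambda\in[0,1]\}.$$
If $z_h=\lambda A_h[z_h]$, then, for every $\phi\in S_h$,
$$(z_h,\phi)+\tfrac{k}{2}(M\nabla z_h,\nabla\phi)+\tfrac{\lambda k}{2}(f(z_h-g_h),\phi)=\lambda(v_h,\phi).$$
Testing with $\phi=z_h-g_h$ (after first assuming WLOG that $f(0)=0$, subtracting the $g_h$-terms, and rearranging exactly as in part (iii) of Lemma~\ref{lem:1}), invoking the monotonicity condition (\ref{a2}) at $y=0$:
$$\lambda k\bigl(f(z_h-g_h),z_h-g_h\bigr)\ge \alpha\lambda k\|z_h-g_h\|^q+\lambda k\tau(0)\|z_h-g_h\|^2,$$
combined with Cauchy inequalities and the hypothesis $k|\tau(0)|<2$ (which supplies the crucial positive coefficient $1-\tfrac{k}{2}|\tau(0)|>0$ on the $\|z_h-g_h\|^2$ term, the analogue of the $\tfrac14$ absorption used in (\ref{c14})), yields a bound $\|z_h\|\le C$ depending only on $\|v_h\|,\|g_h\|,k,M,\gamma$ and known constants, but independent of $\lambda\in[0,1]$. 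Schaefer's theorem then furnishes a fixed point $w_h=A_h[w_h]$, and we set $u_h^{2j+2,n+1}:=2w_h-u_h^{2j+2,n}\in S_h$.

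The outer induction on $j$ is then trivial: for $j=0$, no lower-stage data is needed; for $j\ge1$, the lower-stage sequence $\{u_h^{2j,m}\}$ is produced by the previous induction step (including the separate treatment of the starting values via (\ref{c2b})/(\ref{e7})), so both $g_h$ and $v_h$ are well-defined elements of $S_h$ at each step. The main (and essentially only) obstacle is the bookkeeping that ties the starting-value scheme and the main scheme together consistently across stages; the existence proof itself is a direct finite-element transcription of Lemma~\ref{lem:1}, made easier by the fact that elliptic regularity is not required in the finite-dimensional subspace.
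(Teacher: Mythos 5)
Your proposal is correct, but it follows a genuinely different route from the paper's. The paper does not transplant the Schaefer argument of Lemma~\ref{lem:1} into $S_h$; instead it isolates a separate finite-dimensional tool, Lemma~\ref{lem:3} (a continuous $v:\mathbb{R}^m\to\mathbb{R}^m$ with $v(z)\cdot z\ge 0$ on a sphere has a zero inside the ball), and applies it directly to the nonlinear residual map (\ref{e10}), with the unknown taken to be the \emph{shifted} midpoint $z_h=E u_h^{2j+2,n+1/2}-\Gamma^{j}E u_h^{2j,n+1/2}$ (this is what the choice of $a_h$ encodes). The sign condition $v(z)\cdot z\ge 0$ on the sphere of radius $R$ then falls out of the monotonicity condition (\ref{a2}) at $y=0$ in two lines, the hypothesis $k|\tau(0)|<2$ supplying $2+k\tau(0)>0$; no linear solution operator, no Lax--Milgram, no continuity or compactness verification, and no homotopy set $\Sigma_h$ are needed. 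Your route is a faithful transcription of Lemma~\ref{lem:1} and is also valid, but it costs more bookkeeping: because you keep the unshifted midpoint as the unknown and only shift in the test function $\phi=z_h-g_h$, you generate cross terms $(g_h,z_h-g_h)$ and $\tfrac{k}{2}(M\nabla g_h,\nabla(z_h-g_h))$ that must be absorbed before the coefficient $1-\tfrac{k}{2}|\tau(0)|>0$ on $\|z_h-g_h\|^2$ can be exploited; this works (since $g_h$ is a fixed, known element of $S_h$), but it should be written out, and changing variables to $\tilde z_h=z_h-g_h$ from the outset, as the paper effectively does, is cleaner. What your approach buys is uniformity of method with Lemma~\ref{lem:1}; what the paper's buys is brevity, since the acute-angle lemma is tailor-made for Galerkin systems. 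Both arguments rest on the same underlying topological fact and both organize the outer induction on $j$ and the starting-value scheme derived from (\ref{c2b}) in the same way. One cosmetic slip: the fully discrete starting-value scheme is displayed without an equation number in the paper, so the equation label you cite for it in your final paragraph does not exist.
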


To prove this theorem we need the following lemma which is an adaptation of the lemma on zeros of a vector field \cite[p.531]{evans2010partial}. 

\begin{lemma} 
 \label{lem:3}Let $m$ be a positive integer and $v :\mathbb{R}^m \rightarrow \mathbb{R}^m$ a continuous function satisfying 
\begin{equation}
\label{e9}v\left(z \right) \cdot z \geq 0 ~~\mbox{ if } \|z\|_{\ast}=R,
\end{equation}for a positive real $R$, where $\|.\|_{\ast}$ is an arbitrary norm on $\mathbb{R}^m$. Then there exists a point $z$ in the closed ball
$$\overline{B}(0,R)=\left\lbrace z\in \mathbb{R}^m : \|z\|_{\ast}\leq R \right\rbrace $$such that $v(z)=0.$
\end{lemma}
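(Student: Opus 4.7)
The plan is to prove the lemma by contradiction using Brouwer's fixed point theorem, following the classical argument for the standard Euclidean norm but verifying it still goes through for an arbitrary norm $\|\cdot\|_{\ast}$.

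First I would assume, toward a contradiction, that $v(z)\neq 0$ for every $z$ in the closed ball $\overline{B}(0,R)=\{z\in\mathbb{R}^m:\|z\|_{\ast}\leq R\}$. Since all norms on $\mathbb{R}^m$ are equivalent, $\overline{B}(0,R)$ is compact, convex, and homeomorphic to the standard Euclidean closed ball, so Brouwer's fixed point theorem applies to any continuous self-map of it.

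Next I would introduce the continuous map $\phi:\overline{B}(0,R)\to\mathbb{R}^m$ defined by
\begin{equation*}
\phi(z)=-\frac{R}{\|v(z)\|_{\ast}}\,v(z).
\end{equation*}
Continuity follows from the continuity and non-vanishing of $v$. By construction $\|\phi(z)\|_{\ast}=R$, so $\phi$ is actually a continuous map from $\overline{B}(0,R)$ into itself. Brouwer's theorem then yields a fixed point $z_0\in\overline{B}(0,R)$, and because $\|\phi(z_0)\|_{\ast}=R$ we have $\|z_0\|_{\ast}=R$, so the hypothesis (\ref{e9}) applies and gives $v(z_0)\cdot z_0\geq 0$.

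Finally I would derive a contradiction by computing this dot product directly from the fixed-point relation. Since $z_0=\phi(z_0)=-R\,v(z_0)/\|v(z_0)\|_{\ast}$, and denoting the Euclidean norm by $|\cdot|$,
\begin{equation*}
v(z_0)\cdot z_0 = -\frac{R}{\|v(z_0)\|_{\ast}}\,|v(z_0)|^2 < 0,
\end{equation*}
since $v(z_0)\neq 0$ and $R>0$. This contradicts $v(z_0)\cdot z_0\geq 0$, so the assumption fails and there must exist $z\in\overline{B}(0,R)$ with $v(z)=0$. The only subtle step is the mild adaptation to the arbitrary norm $\|\cdot\|_{\ast}$: the dot product is the standard Euclidean one, while the norm bounding the ball and normalizing $\phi$ need not come from any inner product, but the equivalence of norms and the topological invariance of Brouwer's theorem handle this without difficulty.
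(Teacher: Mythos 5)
Your proof is correct and follows essentially the same route as the paper: assume $v$ never vanishes on the ball, normalize $-v$ to get a continuous self-map of $\overline{B}(0,R)$, extract a fixed point (the paper invokes Schauder's theorem, you invoke Brouwer's, which is the same thing in finite dimensions), and contradict the sign hypothesis on the sphere $\|z\|_{\ast}=R$. The only cosmetic difference is that you evaluate $v(z_0)\cdot z_0=-R\,|v(z_0)|^2/\|v(z_0)\|_{\ast}<0$ directly, while the paper writes the equivalent contradiction as $0<|z|^2=\varphi(z)\cdot z\leq 0$.
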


\begin{proof}[Proof of Lemma \ref{lem:3}] Suppose that $v\left(z\right)\neq 0 $ for each $z\in \overline{B}(0,R)$. The mapping
$$ \psi \,:\,\overline{B}(0,R)\rightarrow \overline{B}(0,R)$$defined by 
$$\psi (z)=-\frac{ R}{\|v\left( z\right)\|_{\ast}}v\left( z \right)$$
is continuous. Since $\overline{B}(0,R)$ is a compact and convex subset of $\mathbb{R}^m$, we deduce from Schauder's fixed-point theorem that $\varphi$ has a fixed point $z \in \overline{B}(0,R)$. Therefore, $\|z\|_\ast=R$, and this leads to the contradiction
$$ 0< |z|^2=\psi(z)\cdot z=-\frac{ R}{\|v\left( z\right)\|_\ast}v\left( z \right)\cdot z\leq 0.$$
\end{proof}

\begin{proof}[Proof of Theorem \ref{theo:5}] We proceed by double induction on $j=1,2,\cdots$ and $n=0,1,\cdots, N$, using Lemma \ref{lem:3} for the function $v~:\mathbb{R}^{N_h} \rightarrow \mathbb{R}^{N_h}$ defined by 
\begin{equation}
\label{e10} v^l(z)=\left( \frac{2z_h-2a_h}{k},\phi_l \right)+\left( M\nabla z_h,\nabla\phi_l \right)+\left( f\left( z_h+\varphi(t_{n+1/2}) \right)- \tilde{S}(t_{n+1/2}), \phi_l\right),
\end{equation}
for $l=1,2\cdots,N_h$, where $a_h\in S_h$ is fixed and $z_h$ is the unique element of $S_h$ associated to $z \in \mathbb{R}^{N_h}$ and defined by
\begin{equation*}
z_h=\sum_{l=1}^{N_h}z_l\phi_l.
\end{equation*}
We take $\|z\|_{\ast}=\|z_h\|$. The function $v$ is continuous. For $j=1$, we have $\bar{u}^{2,0}_h=R_h\left( u_0-\varphi(0)\right)$ and, supposing that $\bar{u}^{2,n}_h$ exists for an arbitrary integer $n<N$ and taking $a_h=\bar{u}^{2,n}_h$ in (\ref{e10}), we have
\begin{equation}
\label{e11}
\begin{aligned}
v\left( z\right)\cdot z& =\left( \frac{2z_h-2u^{2,n}_h}{k},z_h \right)+\left( M\nabla z_h,\nabla z_h \right)+\left( f\left( z_h + \varphi( t_{n+1/2})\right)- \tilde{S}(t_{n+1/2}), z_h\right)\\
&\geq \frac{\|z_h\|}{k}\left[\left( 2+k\tau\left( \varphi( t_{n+1/2})\right) \right)  \|z_h\|-2\|\bar{u}^{2,n}_h\|-k\left( \|f(\varphi( t_{n+1/2}))\|+\|\tilde{S}(t_{n+1/2})\|\right)  \right]\\
&\geq 0,
\end{aligned}
\end{equation}
for 
$$\|z\|_{\ast}=\frac{1}{2+k\tau\left( \varphi( t_{n+1/2})\right)}\left(1+2\|\bar{u}^{2,n}_h\|+k\|\tilde{S} (t_{n+1/2})\|+k\|f(\varphi( t_{n+1/2}))\| \right) :=R.$$
Then, from Lemma \ref{lem:3}, there exists a point $z$ in the closed ball $\overline{B}(0,R)$ of $\left(\mathbb{R}^{N_h}, \|\cdot\|_{\ast} \right) $ such that $v(z)=0$. Taking $$U^{2,n+1}=\left(U^{2,n+1}_1,\cdots,U^{2,n+1}_{N_h}  \right)=2z-U^{2,n},$$
we have
$$v\left( \frac{U^{2,n+1}+U^{2,n}}{2} \right)\cdot e_l=0,$$
for each $e_l$ in the standard basis of $\mathbb{R}^{N_h}$. The last identity implies the existence of $u^{2,n+1}_h$ of the form (\ref{e4}) satisfying (\ref{e5})-(\ref{e6}). Moreover, if $\left\lbrace  \bar{u}^{2j,n}_h\right\rbrace_{n=0}^N$ exists and satisfies (\ref{e5})-(\ref{e6}), for an arbitrary integer $j\geq 1$, then we have $\bar{u}^{2j+2,0}_h=R_h\left( u_0-\varphi(0)\right)$, and the existence of $\bar{u}^{2j+2,n+1}_h$ is immediate from the existence of $\bar{u}_h^{2j+2,n}$, proceeding as in the case $j=1$, taking $a_h=\bar{u}^{2j+2,n}-\Gamma^j E\bar{u}^{2j,n+1/2}_h+0.5k\Lambda^jD\bar{u}^{2j,n+1/2}_h$ in (\ref{e10}).
\end{proof}

 The following theorem shows the convergence and order of accuracy of the fully discretized schemes (\ref{e5})-(\ref{e6}).
 
 \begin{theorem}[Order of convergence of the fully discretized schemes] 
\label{theo:6} Suppose that the exact solution $u$ of (\ref{a1}) is $C^{2p+4}\left([0,T],H^{r+1}(\Omega) \right)$ and satisfies $u(t)=u_0$ for $t\in [0,(2p+1)k_0]$, where $p$ is a positive integer and $k_0>0$ is a real such that $k_0\max\left\lbrace \mu_0,\tau(0)\right\rbrace <2$, $\mu_0$ and $\tau$ are defined in (\ref{a2})-(\ref{a3}). In addition, suppose that $S_h$ satisfies the inverse inequality (\ref{e1c}). Then, for $j=1,2,\cdots,p+1$, the solution $\left\lbrace  \bar{u}^{2j,n}_h\right\rbrace_{n=0}^N$ of the scheme (\ref{e5})-(\ref{e6}) approximates $\bar{u}=u-\varphi$ with order $2j$ of accuracy in time and order $r+1$ in space, that is
\begin{equation}
\label{e13}
\Vert \bar{u}^{2j,n}_h-\bar{u}(t_n) \Vert+h\left \Vert \nabla \left( \bar{u}^{2j,n}_h-\bar{u}(t_n)\right)  \right \Vert \leq C(k^{2j}+h^{r+1 }),
\end{equation}
for $k<k_0$. Furthermore, we have the estimate

\begin{equation}
\label{e14}
\begin{aligned}
\|\bar{u}^{2j,n}_h-&R_h\bar{u}^{2j,n}\|^2_1+k\sum_{i=0}^n\|D( \bar{u}^{2j,i+1/2}_h-R_h\bar{u}^{2j,i+1/2}) \|^2 +2\alpha k\sum_{i=0}^n\|\bar{u}^{2j,i}_h-R_h\bar{u}^{2j,i}\|^q_{L^q(\Omega)}\leq Ch^{2r+2},
\end{aligned}
\end{equation}
where $C$ is a constant depending only on $j$, $T$, $\Omega$, $M$, $k_0$, $\mu_0$ and the derivatives of $S$, $f$ and $u$.
\end{theorem}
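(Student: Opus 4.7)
The plan is to proceed by induction on $j=1,2,\ldots,p+1$ and, for fixed $j$, to adopt the classical parabolic splitting
\begin{equation*}
u^{2j,n}_h - u(t_n) = e^{2j,n} + \rho^{2j,n} + \bigl(u^{2j,n} - u(t_n)\bigr),
\end{equation*}
where $e^{2j,n} = u^{2j,n}_h - R_h u^{2j,n} \in S_h$ and $\rho^{2j,n} = R_h u^{2j,n} - u^{2j,n}$. The projection part $\rho^{2j,n}$ is controlled by (\ref{e3}) and the time semi-discrete part by Theorem \ref{theo:4}, so estimate (\ref{e13}) reduces, via the triangle inequality, to proving (\ref{e14}).

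To derive (\ref{e14}), I would subtract the fully discrete scheme (\ref{e5}) from the semi-discrete scheme (\ref{c2}) tested against an arbitrary $\phi\in S_h$. The defining property $(M\nabla\rho^{2j,n},\nabla\phi)=0$ annihilates the projection error in the diffusion term, and after commuting $R_h$ with $D$, $E$, $\Lambda^j$ and $\Gamma^j$, the error equation takes the schematic form
\begin{equation*}
\bigl(De^{2j+2,n+1/2},\phi\bigr)+\bigl(M\nabla\hat{e}^{2j+2,n+1},\nabla\phi\bigr) = -\bigl(D\rho^{2j+2,n+1/2},\phi\bigr)+\bigl(\Lambda^j D(e^{2j,n+1/2}+\rho^{2j,n+1/2}),\phi\bigr)-\bigl(\Delta f,\phi\bigr),
\end{equation*}
where $\Delta f=f(E u_h^{2j+2,n+1/2}-\Gamma^j E u_h^{2j,n+1/2})-f(\hat u^{2j+2,n+1}-\Gamma^j E u^{2j,n+1/2})$. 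Testing with $\phi=\hat{e}^{2j+2,n+1}$ gives a telescoping identity in $\|e^{2j+2,n}\|^2$ plus a coercive $\gamma k\|\nabla\hat{e}^{2j+2,n+1}\|^2$ contribution. For the nonlinear term I would write $\Delta f$ as an integral of $df$ along a secant and use the one-sided bound (\ref{a3}) to extract $-\mu_0\|\hat{e}^{2j+2,n+1}\|^2$, while the leftover cross-terms in $\hat{\rho}^{2j+2,n+1}$ and in $\Gamma^j E\rho^{2j,n+1/2}$ are bounded by $Ch^{r+1}\|\hat{e}^{2j+2,n+1}\|$; the Lipschitz constant of $f$ along the secant is controlled uniformly thanks to the $L^\infty$ bound (\ref{e3b}) on $R_h u^{2j,n}$ (this is where the inverse inequality enters) combined with the regularity of the semi-discrete solution coming from Theorem \ref{theo:4}. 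The $\alpha\|\,\cdot\,\|^q_{L^q}$ term in (\ref{e14}) is obtained by retaining, rather than discarding, the $\alpha|x-y|^q$ part of (\ref{a2}) when grouping the nonlinear contributions.

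The deferred-correction corrections $\Lambda^jDe^{2j,\cdot}$ and $\Gamma^jEe^{2j,\cdot}$ are handled by the induction step: since $\Lambda^j$ and $\Gamma^j$ are linear combinations of $k^{2i}(D_+D_-)^i$, these terms reduce to weighted mixed time-space differences of $e^{2j,\cdot}$, and I would strengthen the induction hypothesis to carry $L^2$ bounds on $(D_+D_-)^m e^{2j,n}$ and $H^1$ bounds on $\nabla(D_+D_-)^m\hat{e}^{2j,n}$ analogous to the estimates of Theorem \ref{theo:4}. The analogous $\Lambda^jD\rho^{2j}$ and $\Gamma^jE\rho^{2j}$ contributions are bounded by applying (\ref{e3}) to the appropriate finite differences of $u^{2j}$, whose boundedness in $H^{r+1}(\Omega)$ follows from Theorem \ref{theo:4} applied to the time derivatives of $u$. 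Summing Cauchy--Schwarz and the Cauchy inequality over these pieces I would reach an estimate of the form
\begin{equation*}
\|e^{2j+2,n+1}\|^2-\|e^{2j+2,n}\|^2+2\gamma k\|\nabla\hat{e}^{2j+2,n+1}\|^2 \le 2k\mu_0\|\hat{e}^{2j+2,n+1}\|^2+Ckh^{2r+2},
\end{equation*}
and the standing hypothesis $k_0\mu_0<2$ would let me close the recursion by discrete Gronwall exactly as in the proofs of Theorems \ref{theo:3} and \ref{theo:4}, yielding (\ref{e14}). The starting terms $e^{2j+2,0},\ldots,e^{2j+2,j}$ vanish or are $O(h^{r+1})$ either because $u_h^{2j+2,0}=R_hu_0$ and $u(\cdot,t)=u_0$ on $[0,(2p+1)k_0]$ or because the starting scheme of (\ref{c2b})-type admits the same analysis.

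The main obstacle I anticipate is the mixed time-space bookkeeping required to propagate the induction: to close the estimate at stage $j+1$ I need a priori bounds on several discrete-time derivatives of $e^{2j}$ that match those on $\Theta^{2j}$ in Theorem \ref{theo:4}. These will be obtained by applying $(D_+D_-)^m$ to the error equation for each $m\le p-j$, testing with $(D_+D_-)^m\hat{e}^{2j+2,n+1}$, expanding the resulting discrete Leibniz products for $(D_+D_-)^m\Delta f$ as in the proof of Lemma \ref{lem:2}, and again using the stationary initial data to eliminate starting terms. The inverse inequality (\ref{e1c}) will be invoked at each induction step to convert the $L^2$-type control on $e^{2j}$ into the $L^\infty$ control needed to linearise $f$ and to keep the arguments of $df$ in a fixed bounded set.
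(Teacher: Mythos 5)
Your overall strategy (Ritz-projection splitting, induction on $j$, energy estimates with the two test functions $\hat e$ and $D e$, $L^\infty$ control of the nonlinearity via the inverse inequality, discrete Gronwall) matches the paper's, but there are two concrete points where your plan diverges in a way that would cause trouble.

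First, the paper does not work with the bare error $e^{2j+2,n}=u^{2j+2,n}_h-R_hu^{2j+2,n}$ but with the corrected quantity $\Theta^{2j+2,n}_h=e^{2j+2,n}-\Gamma^j e^{2j,n}$, and it is $\hat\Theta^{2j+2,n+1}_h$ (not $\hat e^{2j+2,n+1}$) that is used as test function. This is not cosmetic: the increment of $f$ in the error equation is taken between $E u_h^{2j+2}-\Gamma^j E u_h^{2j}$ and $R_h(\hat u^{2j+2}-\Gamma^j\hat u^{2j})$, whose difference is exactly $\hat\Theta_h^{2j+2}$, so the monotonicity (\ref{a2}) produces the signed terms $\alpha\|\hat\Theta_h\|^q_{L^q}-\mu\|\hat\Theta_h\|^2$ only when paired with $\hat\Theta_h$ itself. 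Testing with $\hat e^{2j+2,n+1}$, as in your schematic error equation, the one-sided bound does not apply and your plan to ``retain the $\alpha|x-y|^q$ part'' cannot be carried out as stated; at best you recover a Lipschitz estimate and lose the $L^q$ term of (\ref{e14}). The price of the correct choice is a final triangle inequality $\|e^{2j+2,n}\|\le\|\Theta_h^{2j+2,n}\|+\|\Gamma^j e^{2j,n}\|$, with the last term controlled by the induction hypothesis. (Also, the constant in the Gronwall step is $\mu=\max_{|y|\le 1+\|u\|_{L^\infty}}|\tau(y)|$ coming from (\ref{a2}), not $\mu_0$ from (\ref{a3}).)

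Second, your proposed strengthening of the induction hypothesis to carry $L^2$ bounds on $(D_+D_-)^m e^{2j,n}$ for $m\le p-j$, obtained by differencing the fully discrete error equation $m$ times, is not needed and would add a heavy layer of bookkeeping (a fully discrete analogue of the Leibniz expansion (\ref{d20}), with $L^\infty$ control of all intermediate arguments). The key observation you are missing is that $\Lambda^j$ and $\Gamma^j$ carry the factor $k^{2i}$ that exactly cancels the $k^{-2i}$ in $(D_+D_-)^i$, so by (\ref{c2ab})--(\ref{c2ac}) the source term $(\Lambda^j-\Gamma^j)D e^{2j,\cdot}$ is a fixed, $k$-independent linear combination of shifted first differences $De^{2j,m+1/2}$. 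Consequently $k\sum_m\|(\Lambda^j-\Gamma^j)De^{2j,m+1/2}\|\le C\sqrt{T}\bigl(k\sum_m\|De^{2j,m+1/2}\|^2\bigr)^{1/2}\le Ch^{r+1}$ follows from Cauchy--Schwarz and precisely the $k\sum\|D(\cdot)\|^2$ term already present in (\ref{e14}); this is why (\ref{e14}) as stated is a self-contained induction hypothesis. Higher-order discrete time derivatives are required only at the semi-discrete level (Theorem \ref{theo:4}), where they feed the consistency bound $\|w_h^{2j+2,n+1/2}\|\le Ch^{r+1}$ of Claim \ref{claim:1}, not for the fully discrete error.
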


\begin{proof} Inequality (\ref{e13}) is immediate from (\ref{e14}) by quadruple triangle  inequality, writing
$$
\begin{aligned}
\bar{u}^{2j,n}_h-\bar{u}(t_n)=\left(\bar{u}^{2j,n}_h-R_h\bar{u}^{2j,n} \right)&-[\bar{u}(t_n)-\bar{u}^{2j,n}]-[ \bar{u}(t_n)-R_h\bar{u}(t_n)]+\left[\bar{u}(t_n)-\bar{u}^{2j,n}-R_h(  \bar{u}(t_n)-\bar{u}^{2j,n}) \right],\end{aligned}$$
and taking (\ref{e3}) and (\ref{d31}) into account. Therefore, we just need to establish (\ref{e14}). We proceed by induction on $j=1,2,\cdots,p+1$. For this purpose, we need the following claim which proof is a straightforward application of the mean value theorem, the triangle inequality, and inequalities (\ref{d31}), (\ref{e3})-(\ref{e3b}).

\begin{claim}
\label{claim:1} There exist $0<k_3\leq k_0$ and $h_1>0$ such that $k\leq k_3$ and $h\leq h_1$ imply, 
\begin{equation}
\label{e01}
\begin{aligned}
\|R_h(E\bar{u}^{2j+2,n+1/2}-\Gamma^j E\bar{u}^{2j,n+1/2})+\varphi(t_{n+1/2})\|_\infty \leq 1+\|{u}\|_{L^\infty(0,T,H^2(\Omega))} ,
\end{aligned}
\end{equation}
and
\begin{equation}
\label{e02}
\|w_h^{2j+2,n+1/2}\| \leq Ch^{r+1},
\end{equation}for each $j=0,1,\cdots,p$, and $n=0,1,\cdots,N$, where we define
\begin{equation}
\label{e03}
\begin{aligned}
&w_h^{2j+2,n+1/2}=+D\left(\bar{u}^{2j+2,n+1/2}-\Lambda^j \bar{u}^{2j,n+1/2}\right)-R_hD\left(\bar{u}^{2j+2,n+1/2}-\Lambda^j \bar{u}^{2j,n+1/2}\right)+\\&f\left(  E\bar{u}^{2j+2,n+1/2}-\Gamma^j E\bar{u}^{2j,n+1/2}+\varphi(t_{n+1/2})\right) -f\left(R_h(E\bar{u}^{2j+2,n+1/2}-\Gamma^j E\bar{u}^{2j,n+1/2})+\varphi(t_{n+1/2})\right),
\end{aligned}
\end{equation}and we set $\bar{u}^{\,0,n}=0$.
\end{claim}

\noindent
1.\quad The case $j=1$. We proceed in two steps:

\vspace*{.5cm}
\noindent
(i)\quad First, we are going to prove the inequality
\begin{equation}
\label{e20b}
\begin{aligned}
\|\bar{u}^{2,n}_h-R_h\bar{u}^{2,n}\|^2+&2\gamma k\sum_{i=0}^n\|\nabla E(\bar{u}^{2,i+1/2}_h-R_h\bar{u}^{2,i+1/2})\|^2+ 2\alpha k\sum_{i=0}^n\|E(\bar{u}^{2,i+1/2}_h-R_h\bar{u}^{2,i+1/2})\|^q_{L^q(\Omega)}\\& \leq C h^{2r+2} .
\end{aligned}
\end{equation}
The scheme (\ref{c1}) yields
$$\left( D \bar u^{2,n+1/2},\phi\right)+\left(M\nabla E\bar {u}^{2,n+1/2},\nabla \phi \right)+\int_{\Omega}f\left( E\bar{u}^{2,n+1}+\varphi(t_{n+1/2})\right) \phi dx=\left(\tilde{S}(t_{n+1/2}),\phi \right), \mbox{ ~ } \forall \phi \in S_h.$$
Therefore, combining this identity and (\ref{e5}), for $j=0$, we deduce that 
\begin{equation}
\label{e15}
\begin{aligned}
\left( D \Theta^{2,n+1/2}_h,\phi\right)+&\left(M\nabla \widehat{\Theta}_h^{2,n+1},\nabla \phi \right)+\int_{\Omega}  \left[ f(E\bar{u}^{2,n+1/2}+\varphi(t_{n+1/2}))-f(R_hE\bar{u}^{2,n+1/2}+\varphi(t_{n+1/2}))\right]\phi dx \\&=\left(w_h^{2,n+1/2},\phi \right)+\left(M\nabla \left( E\bar{u}^{2,n+1/2}-R_hE\bar{u}^{2,n+1/2}\right) ,\nabla \phi \right), \mbox{ ~ } \forall \phi \in S_h,
\end{aligned}
\end{equation}
where 
$$\Theta^{2,n}_h=\bar{u}^{2,n}_h-R_h\bar{u}^{2,n},$$
and $w_h^{2,n+1/2}$ is defined in (\ref{e03}). Hypothesis (\ref{a2}) and inequality (\ref{e01}) yield 
\begin{equation}
\label{e17c}
\int_{\Omega}  \left[ f\left( E\bar{u}_h^{2,n+1/2}+\varphi(t_{n+1/2})\right) -f\left( R_hE\bar{u}^{2,n+1/2}+\varphi(t_{n+1/2})\right) \right]\widehat{\Theta}_h^{2,n+1} dx\geq \alpha \|\widehat{\Theta}_h^{2,n+1}\|^q_{L^q(\Omega)}-\mu\|\widehat{\Theta}_h^{2,n+1}\|^2,
\end{equation}
where
$$\mu=\max_{|y|\leq 1+\|u\|_{L^\infty \left( 0,T;H^2(\Omega)\right) }}|\tau(y)|.$$
From the properties of orthogonal projection we have the identity  
\begin{equation}
\label{e18b}  \left(M\nabla \left( E\bar{u}^{2,n+1/2}-R_hE\bar{u}^{2,n+1/2}\right) ,\nabla \phi \right)=-\left( E\bar{u}^{2,n+1/2}-R_hE\bar{u}^{2,n+1/2} , \phi \right), \forall \phi \in S_h,
\end{equation}which implies that
$$ \left |\left(M\nabla \left( E\bar{u}^{2,n+1/2}-R_hE\bar{u}^{2,n+1/2}\right) ,\nabla \phi \right)\right |\leq \| E\bar{u}^{2,n+1/2}-R_hE\bar{u}^{2,n+1/2}\|\,\|\phi\| \leq Ch^{r+1}\|\phi\|.$$
Therefore, choosing $\phi =\widehat{\Theta}_h^{2,n+1}$ in (\ref{e15}), we deduce from the last inequality, the Cauchy-Schwartz inequality and the inequalities (\ref{e02}) and (\ref{e17c})  that
\begin{equation}
\label{e19}
\begin{aligned}
\left( D \Theta^{2,n+1/2}_h,\widehat{\Theta}_h^{2,n+1} \right)+&\gamma \|\nabla \widehat{\Theta}_h^{2,n+1}\|^2 + \alpha \|\widehat{\Theta}_h^{2,n+1}\|^q_{L^q(\Omega)} \leq Ch^{r+1}\|\widehat{\Theta}_h^{2,n+1}\|+\mu\|\widehat{\Theta}_h^{2,n+1}\|^2,
\end{aligned}
\end{equation}for $0<k\leq k_3$ and $0<h\leq h_1$. This inequality yields
\begin{equation*}
\left( D \Theta^{2,n+1/2}_h,\widehat{\Theta}_h^{2,n+1} \right) \leq Ch^{r+1}\left \|\widehat{\Theta}_h^{2,n+1}\right \|+\mu\left \|\widehat{\Theta}_h^{2,n+1}\right \|^2,
\end{equation*}
and it follows for $0< k\mu \leq k_3\mu<2$ that
$$\left \|\Theta^{2,n+1}_h \right \|\leq C\frac{k}{2-k\mu}h^{r+1}+\frac{2+k\mu}{2-k\mu} \left \|\Theta^{2,n}_h \right \|.$$
Proceeding by induction as in Theorem \ref{theo:3}, the last inequality yields
\begin{equation}
\label{e20}
\begin{aligned}
\left \|\Theta^{2,n}_h \right \|&\leq \left( nkCh^{r+1} +\left \|\Theta^{2,0}_h\right  \|  \right) \left( \frac{2+k\mu}{2-k\mu} \right)^n\leq Ch^{r+1}
\end{aligned}
\end{equation}
since $nk\leq T$ and $\Theta^{2,0}_h=0$. Inequality (\ref{e20b}) follows by substituting (\ref{e20}) in (\ref{e19}). 
\newline

\vspace*{.5cm}
\noindent 
(ii)\quad Now we are going to prove the inequality
\begin{equation}
\label{e20e}
k\sum_{i=0}^n \left \|D \Theta^{2,n+1/2}_h\right \|^2+\gamma \|\nabla {\Theta}_h^{2,n+1}\|^2\leq Ch^{2r+2}  .
\end{equation}
We choose $\phi =D{\Theta}_h^{2,n+1/2}$ in (\ref{e15}) and obtain
\begin{equation}
\label{e20c}
\begin{aligned}
\left \|D \Theta^{2,n+1/2}_h\right \|^2+&\int_{\Omega}  \left[ f\left( E\bar{u}_h^{2,n+1/2}+\varphi(t_{n+1/2})\right) -f\left( R_hE\bar{u}^{2,n+1/2}+\varphi(t_{n+1/2})\right) \right]D \Theta^{2,n+1/2}_h dx \\&+\left(M\nabla \widehat{\Theta}_h^{2,n+1},\nabla D \Theta^{2,n+1/2}_h \right)=\left(w_h^{2,n+1/2}, D \Theta^{2,n+1/2}_h \right).
\end{aligned}
\end{equation} 
We can write
$$\begin{aligned}f( E\bar{u}_h^{2,n+1/2}+\varphi(t_{n+1/2}))& -f( R_hE\bar{u}^{2,n+1/2}+\varphi(t_{n+1/2})) \\&=\int_0^1df\left( R_hE\bar{u}^{2,n+1/2}+\varphi(t_{n+1/2})+\xi \widehat{\Theta}^{2,n+1}_h\right)\left( \widehat{\Theta}^{2,n+1}_h\right) d\xi.\end{aligned}$$
From the inverse inequality (\ref{e1c}) and the inequality (\ref{e20}), we have
\begin{equation}
\label{e20cb}\| \widehat{\Theta}^{2,n+1}_h\|_\infty \leq ch^{-3/2}\|\Theta^{2,n}_h\|\leq  C h^{r-1/2},\; r\geq 1.
\end{equation}
This inequality together with (\ref{e01}) implies that there exists $0<h_2\leq h_1$ such that, for $0< h\leq h_2$, we have
$$\|R_h E\bar{u}^{2,n+1/2}+\varphi(t_{n+1/2})+\xi \widehat{\Theta}^{2,n+1}_h\|_\infty \leq 2+\|u\|_{L^\infty\left( 0,T;H^2(\Omega)\right) }.$$
The last identity yields
\begin{equation}
\begin{aligned}
\label{e20d}
\left \|f\left( E\bar{u}_h^{2,n+1/2}+\varphi(t_{n+1/2})\right) -f\left( R_hE\bar{u}^{2,n+1/2}+\varphi(t_{n+1/2})\right) \right \|&\leq \max_{|y|\leq 2+ \|u\|_{L^\infty \left( 0,T;H^2(\Omega)\right) }}|df(y)|\left \|\widehat{\Theta}^{2,n+1}_h\right \|\\&\leq C\left \|\widehat{\Theta}^{2,n+1}_h\right \|.\end{aligned}
\end{equation}
Substituting  (\ref{e20d}) in (\ref{e20c}), we deduce by Cauchy-Schwartz inequality and (\ref{e02}) that
\begin{equation*}
\begin{aligned}
k \|D \Theta^{2,n+1/2}_h \|^2+&\left(M\nabla {\Theta}_h^{2,n+1},\nabla {\Theta}_h^{2,n+1} \right)-\left(M\nabla {\Theta}_h^{2,n},\nabla {\Theta}_h^{2,n} \right)\leq Ckh^{2r+2},
\end{aligned}
\end{equation*}
for $n=0,1,\cdots, N-1$. It follows the inequality
\begin{equation*}
k\sum_{i=0}^n \left \|D \Theta^{2,n+1/2}_h\right \|^2+\left(M\nabla {\Theta}_h^{2,n+1},\nabla {\Theta}_h^{2,n+1} \right)\leq Cnkh^{2r+2}
\end{equation*}
since ${\Theta}_h^{2,0}=0$. The last inequality gives exactly (\ref{e20e}), where $C$ is a constant depending only on $T$, $\Omega$, $k_{i+1}$, $h_i$, $i=1,2$, and the derivatives of $f$ and $u$. 

Estimates (\ref{e20b}) and (\ref{e20e}) gives (\ref{e14}) for $j=1$.

\vspace*{.5cm}
\noindent
2.~~  Here we prove inequality (\ref{e14}) for $j+1$, assuming that it holds up to order $j$, $1\leq j\leq p$.

From the scheme (\ref{c2}) we have
\begin{equation}
\label{e21}\begin{aligned}
\left( D \bar u^{2j+2,n+1/2}\right. &\left. -\Lambda^j D\bar u^{2j,n+1/2},\phi\right)+\left(M\nabla \left(E \bar {u}^{2j+2,n+1/2}-\Gamma^j E\bar{u}^{2j,n+1/2}\right) ,\nabla \phi \right)\\&+\int_{\Omega}f\left( E\bar{u}^{2j+2,n+1/2}-\Gamma^j E\bar{u}^{2j,n+1/2}+\varphi(t_{n+1/2})\right) \phi dx=\left(\tilde{S}(t_{n+1/2}),\phi \right), \mbox{ ~ } \forall \phi \in S_h.
\end{aligned}
\end{equation}
Combining this identity and (\ref{e5}), we deduce that
\begin{equation}
\label{e22}
\begin{aligned}
& \left(f\left( E\bar{u}^{2j+2,n+1/2}_h-\Gamma^j E\bar{u}_h^{2j,n+1/2}+\varphi(t_{n+1/2})\right) -f\left( R_h(E\bar{u}^{2j+2,n+1/2}-\Gamma^j E\bar{u}^{2j,n+1/2})+\varphi(t_{n+1/2})\right),\phi\right)\\&+\left( D \Theta^{2j+2,n+1/2}_h,\phi\right) +(M\nabla \widehat{\Theta}_h^{2j+2,n+1},\nabla \phi ) =\left( w_h^{2j+2,n+1/2}+(\Lambda^j -\Gamma^j)D( \bar u^{2j,n+1/2}_h-R_h\bar u^{2j,n+1/2} ),\phi\right)\\&-\left( (Id-R_h)\left( E\bar{u}^{2j+2,n+1/2}-\Gamma^j E\bar{u}^{2j,n+1/2}\right)  , \phi \right) , 
\end{aligned}
\end{equation}
for any $\phi \in S_h$, where we define
$$\Theta^{2j+2,n}_h=\bar u^{2j+2,n}_h-R_h\bar u^{2j+2,n}-\Gamma^j \left(  \bar u^{2j,n}_h-R_h\bar u^{2j,n}\right) ,$$
and we use the identity
\begin{equation}
\label{e22b}
\left(M\nabla (Id-R_h)( E\bar{u}^{2j+2,n+1/2}-\Gamma^j E\bar{u}^{2j,n+1/2})  ,\nabla \phi \right)=-\left( (Id-R_h)\left( E\bar{u}^{2j+2,n+1/2}-\Gamma^j E\bar{u}^{2j,n+1/2}\right)  , \phi \right), 
\end{equation}
for any $\phi \in S_h$. $Id$ denotes the identity application. As in (\ref{e17c}) we have
\begin{equation*}
\begin{aligned}\left(f( E\bar{u}^{2j+2,n+1/2}_h-\Gamma^j E\bar{u}_h^{2j,n+1/2}+\varphi(t_{n+1/2}))\right. &\left.  -f( R_h(E\bar{u}^{2j+2,n+1/2}-\Gamma^j E\bar{u}^{2j,n+1/2})+\varphi(t_{n+1/2})), \widehat{\Theta}_h^{2j+2,n+1}\right)  \\& \geq \alpha \|\widehat{\Theta}_h^{2j+2,n+1}\|^q_{L^q(\Omega)}-\mu\|\widehat{\Theta}_h^{2j+2,n+1}\|^2. 
\end{aligned}
\end{equation*}
Therefore, choosing $\phi=\widehat{\Theta}_h^{2j+2,n+1} $ in (\ref{e22}), we deduce by the triangle inequality, the last inequality and (\ref{e02}) that
\begin{equation}
\label{e25b}
\begin{aligned}
( D \Theta^{2j+2,n+1/2}_h,\widehat{\Theta}_h^{2j+2,n+1} )&+\gamma \|\nabla \widehat{\Theta}_h^{2j+2,n+1}\|^2 +\alpha \|\widehat{\Theta}_h^{2,n+1}\|^q_{L^q(\Omega)} \leq \mu\|\widehat{\Theta}_h^{2j+2,n+1}\|^2+\\&\left(C h^{r+1}+\|(\Lambda^j -\Gamma^j)D_-( \bar u_h^{2j,n+1}-R_h\bar u^{2j,n+1} ) \|\right)\|\widehat{\Theta}_h^{2j+2,n+1}\|.
\end{aligned}
\end{equation}
This inequality implies that
\begin{equation}
\label{e25c}
\begin{aligned}
\| \Theta^{2j+2,n+1}_h\|-\|\Theta^{2j+2,n}_h\| \leq  k\mu\|\widehat{\Theta}_h^{2j+2,n+1}\|+k\left(C h^{r+1}+\left \|(\Lambda^j -\Gamma^j)D\left(  \bar u_h^{2j,n+1/2}-R_h\bar u^{2j,n+1/2}\right)  \right \| \right),
\end{aligned}
\end{equation}and we deduce, for $k\mu<2$, that
\begin{equation*}
\begin{aligned}
\| \Theta^{2j+2,n+1}_h\| \leq  \frac{k}{2-k\mu}\left(C h^{r+1}+\left \|(\Lambda^j -\Gamma^j)D\left(  \bar u_h^{2j,n+1/2}-R_h\bar u^{2j,n+1/2} \right)  \right \| \right)+\frac{2+k\mu}{2-k\mu}\left \| \Theta^{2j+2,n}_h\right \|.
\end{aligned}
\end{equation*}It follows by induction that, 
\begin{equation}
\label{e25e}
\begin{aligned}
\| \Theta^{2j+2,n+1}_h\|\leq  &C\left(\frac{2+k\mu}{2-k\mu} \right)^{n-j} \left( h^{r+1}+\|\Theta^{2j+2,j}_h\|\right) \\&+k\left(\frac{2+k\mu}{2-k\mu} \right)^{n-j}\sum_{m=j}^n \|(\Lambda^j -\Gamma^j)D(  \bar u_h^{2j,m+1/2}-R_h\bar u^{2j,m+1/2}) \|  ,
\end{aligned}
\end{equation}
for $n\geq j$, and for $0\leq n\leq j-1$ we have
\begin{equation}
\label{e25ec}
\begin{aligned}
\|\bar{ \Theta}^{2j+2,n+1}_h\|\leq  &C\left(\frac{2+k\mu}{2-k\mu} \right)^{n} \left(h^{r+1}+\|\bar{\Theta}^{2j+2,0}_h\| \right)\\&+k\left( \frac{2+k\mu}{2-k\mu}\right) ^{n}\sum_{m=0}^j\left \|(\bar{\Lambda}^j -\bar{\Gamma}^j)D\left(  \bar{\bar u}^{\, 2j,(2j+1)m+j+1/2}_h-R_h\bar{\bar u}^{2j,(2j+1)m+j+1/2} \right) \right \| ,
\end{aligned}
\end{equation}
where we define
$$\bar{ \Theta}^{2j+2,n}_h=\bar{u}^{2j+2,n}_h-R_h{\bar u}^{2j+2,n}-\bar{\Gamma}^j\left( \bar{\bar u}^{\,2j,(2j+1)n+j+1}_h-R_h\bar{\bar u}^{\,2j,(2j+1)n+j+1}\right) .$$
Since $\left\lbrace \bar{u}^{2j,n}_h\right\rbrace_{n=0}^N $ and $\left\lbrace \bar{\bar u}^{\,2j,m}_h\right\rbrace_{m=0}^j $ are obtained  from the same scheme, but for different time steps $k$ and $k_j=k/(2j+1)$, respectively, as for $\left\lbrace {\bar u}^{2j,n}\right\rbrace_{n=0}^N $ and $\left\lbrace \bar{\bar u}^{\,2j,m}\right\rbrace_{m=0}^j $, we deduce from the induction hypothesis and the formulae (\ref{c2c}) and (\ref{c2d}) that 
\begin{equation}
\label{e25ad}
\begin{aligned}
\|\bar{ \Theta}^{2j+2,0}_h\|_1&=\|\bar{\Gamma}^j\left( \bar{\bar u}^{2j,j+1}_h-R_h\bar{\bar u}^{\, 2j,j+1}\right)\|_1\leq C\sum_{m=0}^{2j}\|\bar{\bar u}^{\,2j,m}_h-R_h\bar{\bar u}^{\,2j,m} \|_1\leq Ch^{r+1},
\end{aligned}
\end{equation}and
$$
\begin{aligned}
& k\sum_{m=0}^j\left \|(\bar{\Lambda}^j -\bar{\Gamma}^j)D\left(  \bar{\bar u}^{\,2j,m_j+1/2}_h-R_h\bar{\bar u}^{\,2j,m_j+1/2} \right) \right \|&\leq C\sqrt{ k\sum_{m=0}^{2j^2+3j} \|D(  \bar{\bar u}^{2j,m+1/2}_h-R_h\bar{\bar u}^{2j,m+1/2})\|^2}\\&\leq  Ch^{r+1},
\end{aligned}$$
where $m_j=(2j+1)m+j$. Substituting the last two inequalities in (\ref{e25ec}), we deduce that 
\begin{equation*}
 \|\bar{ \Theta}^{2j+2,n}_h\|\leq  Ch^{r+1}, \mbox{ for }0\leq n\leq j,
\end{equation*}
and it follows by the triangle inequality and the induction hypothesis that
\begin{equation}
\label{e25ed}
 \|\bar{u}^{2j+2,n}_h-R_h{\bar u}^{2j+2,n}\|\leq  Ch^{r+1}, \mbox{ for }0 \leq n\leq j.
\end{equation}
By the triangle inequality and the induction hypothesis, (\ref{e25ed}) in turn yields
\begin{equation*}
 \| \Theta^{2j+2,j}_h\|\leq  Ch^{r+1} ,
\end{equation*}
and we have from (\ref{c2ab}) and (\ref{c2ac})
\begin{equation*}
\begin{aligned}
k\sum_{m=j}^n \|(\Lambda^j -\Gamma^j)D( \bar u_h^{2j,m+1/2}-R_h\bar u^{2j,m+1/2} )\| \leq C\sqrt{nk}\sqrt{ k\sum_{m=0}^{n+j} \|D( \bar u_h^{2j,m+1/2}-R_h\bar u^{2j,m+1/2})\|^2}\leq  Ch^{r+1}.
\end{aligned}
\end{equation*}
The last two inequalities and (\ref{e25ed}) substituted in (\ref{e25e}) yields
\begin{equation}
\label{e25ga}\|\Theta^{2j+2,n}_h\| \leq Ch^{r+1}, \mbox{ for } j\leq n\leq N,
\end{equation}
and it follows from (\ref{e25b}) and (\ref{e25ed}) that
\begin{equation}
\label{e25g}
\begin{aligned}\|\bar u^{2j+2,n}_h-R_h\bar u^{2j+2,n}\|^2&+ 2\alpha k\sum_{i=0}^n\|E\bar{u}^{2j+2,i+1/2}_h-R_hE\bar{u}^{2j+2,i+1/2}\|^q_{L^q(\Omega)} \leq Ch^{2r+2} .
\end{aligned}
\end{equation}
Otherwise, proceeding as in the step 1-(ii) of this proof, we choose $\phi=D{\Theta}_h^{2j+2,n+1/2}$ in (\ref{e22}) and deduce from (\ref{e25ga}) that 
\begin{equation}
\label{e25j}
\begin{aligned}
k\sum_{i=j}^n\left \|D{\Theta}_h^{2j+2,i+1/2}\right \|^2&+\gamma \left \|\nabla{\Theta}_h^{2j+2,n+1} \right \|^2\leq Ch^{2r+2}+\left( M\nabla{\Theta}_h^{2j+2,j},\nabla {\Theta}_h^{2j+2,j}\right), 
\end{aligned}
\end{equation}for $j\leq n\leq N$, and, for $0\leq n\leq j-1$,
\begin{equation}
\label{e25jb}
\begin{aligned}
k\sum_{i=0}^j\left \|D{\bar{\Theta}}_h^{2j+2,i+1/2}\right \|^2&+\gamma \left \|\nabla\bar{{\Theta}}_h^{2j+2,n+1} \right \|^2\leq Ch^{2r+2} 
\end{aligned}
\end{equation}
since, from Cauchy-Schwartz inequality and (\ref{e25ad}), we have
$$\left \vert \left( M\nabla \bar{{\Theta}}_h^{2j+2,0},\nabla \bar{{\Theta}}_h^{2j+2,0}\right)\right \vert \leq \Vert M\Vert_F \|\nabla \bar{{\Theta}}_h^{2j+2,0}\|^2\leq  Ch^{2r+2}. $$
By the triangle inequality and the induction hypothesis, inequality (\ref{e25jb}) for $n=j-1$ yields
$$\left \vert \left( M\nabla{\Theta}_h^{2j+2,j},\nabla {\Theta}_h^{2j+2,j}\right) \right \vert \leq \Vert M\Vert_F \|\nabla {\Theta}_h^{2j+2,j}\|^2 \leq Ch^{2r+2}. $$
Substituting the last identity in (\ref{e25j}), we deduce from (\ref{e25jb}), the induction hypothesis, and the triangle inequality that
\begin{equation}
\label{e25k}
\begin{aligned}
k\sum_{i=0}^n\|D ( E\bar{u}^{2j+2,i+1/2}_h-R_hE\bar{u}^{2j+2,i+1/2}) \|^2&+\gamma  \|\nabla\left( E\bar{u}^{2j+2,n+1/2}_h-R_hE\bar{u}^{2j+2,n+1/2} \right)\|^2\leq Ch^{2r+2},
\end{aligned}
\end{equation}
for $0\leq n\leq N-1$, where $C$ is a constant depending only on $j$, $T$, $\Omega$, $M$, and the derivatives of $f$ and $u$. Inequality (\ref{e14}) for the case $j+1$ follows from (\ref{e25g}) and (\ref{e25k}). Therefore, we can conclude by induction that the Theorem holds for $1\leq j\leq p+1$.
\end{proof}

\begin{corollary}
\label{cor:6} Under the conditions of Theorem \ref{theo:6}, if $S_h$ does not satisfy the inverse inequality, provided that, in addition to conditions (\ref{a2}) and (\ref{a3}), $f$ satisfies the inequality
\begin{equation}
\label{e47}\vert f(x)-f(y)\vert\leq C\left(|x-y|+|x-y|^{q-1}\right) , \mbox{ for each } x,y\in\mathbb{R}^J,
\end{equation}
then the solution $\left\lbrace  \bar u^{2j,n}_h\right\rbrace_{n=0}^N$, $1\leq j\leq p+1$,  of the scheme (\ref{e5})-(\ref{e6}) satisfies
\begin{equation}
\label{e48}
\Vert\bar u^{2j,n}_h-\bar u(t_n) \Vert\leq C(h^{r}+k^{2j}),\quad \forall n=0,1,...,N, k<k_0.
\end{equation}
Furthermore, we have the estimate
\begin{equation}
\label{e49}
\begin{aligned}
\|\bar u^{2j,n}_h-I^r_h\bar u^{2j,n}\|^2+&\gamma k\sum_{i=0}^n\|\nabla E( \bar u^{2j,i+1/2}_h-I^r_h\bar u^{2j,i+1/2}) \|^2+2\alpha k\sum_{i=0}^n\|\bar u^{2j,i}_h-I^r_h\bar u^{2j,i}\|^q_{L^q(\Omega)} \leq Ch^{2r},
\end{aligned}
\end{equation}
where $C$ is a constant depending only on $j$, $T$, $\Omega$, $M$, $k_0$, $\mu_0$, and the derivatives of $S$, $f$ and $u$.
\end{corollary}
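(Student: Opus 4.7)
The plan is to mirror the induction-on-$j$ argument of Theorem \ref{theo:6}, replacing throughout the elliptic projection $R_h$ by the interpolation operator $I^r_h$, and replacing the inverse-inequality tool (which gave $L^\infty$ control via (\ref{e20cb})) by the growth condition (\ref{e47}). For the base case $j=1$, set $\Theta_h^{2,n}=u_h^{2,n}-I^r_h u^{2,n}$ and $\rho^n=u^{2,n}-I^r_h u^{2,n}$. Subtracting (\ref{c1}) tested against $\phi\in S_h$ from (\ref{e5}) (with $j=0$) gives
\begin{equation*}
(D\Theta_h^{2,n+1/2},\phi)+(M\nabla\widehat{\Theta}_h^{2,n+1},\nabla\phi)+(f(\widehat{u}_h^{2,n+1})-f(I^r_h\widehat{u}^{2,n+1}),\phi)=\mathcal{R}(\phi),
\end{equation*}
where $\mathcal{R}(\phi)$ collects the consistency errors $(D\rho^{n+1/2},\phi)$, $(M\nabla\widehat{\rho}^{n+1},\nabla\phi)$, and $(f(I^r_h\widehat{u}^{2,n+1})-f(\widehat{u}^{2,n+1}),\phi)$. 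The orthogonality (\ref{e18b}) used for $R_h$ fails here, so $(M\nabla\widehat{\rho}^{n+1},\nabla\phi)$ must be retained and bounded by $Ch^r\|\nabla\phi\|$ via (\ref{e1}) with $l=r$; similarly $|(D\rho^{n+1/2},\phi)|\leq Ch^{r+1}\|\phi\|$.

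For the nonlinear consistency term, apply (\ref{e47}) pointwise to obtain $|f(I^r_h u)-f(u)|\leq C(|\rho|+|\rho|^{q-1})$, then combine with H\"older's inequality and the $L^q$-interpolation error bound from (\ref{e1b}) to get $|(f(I^r_h\widehat{u}^{2,n+1})-f(\widehat{u}^{2,n+1}),\phi)|\leq C(h^{r+1}\|\phi\|+h^{r+1}\|\phi\|_{L^q})$. Test now with $\phi=\widehat{\Theta}_h^{2,n+1}$: the monotonicity (\ref{a2}) yields $\alpha\|\widehat{\Theta}_h^{2,n+1}\|_{L^q}^q-\mu\|\widehat{\Theta}_h^{2,n+1}\|^2$ on the left, where $\mu$ bounds $\tau(I^r_h\widehat{u}^{2,n+1})$ uniformly via the embedding $H^{r+1}\hookrightarrow L^\infty$ applied to $u$ (no inverse inequality needed to bound $\|I^r_h\widehat{u}^{2,n+1}\|_\infty$). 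Young's inequality with exponents $q$ and $q/(q-1)$ absorbs the $L^q$ consistency contribution into $\tfrac{\alpha}{2}\|\widehat{\Theta}_h^{2,n+1}\|_{L^q}^q$, Cauchy's inequality with $\varepsilon=\gamma/2$ absorbs $h^r\|\nabla\widehat{\Theta}_h^{2,n+1}\|$ into $\tfrac{\gamma}{2}\|\nabla\widehat{\Theta}_h^{2,n+1}\|^2$, and the discrete Gronwall argument of step~1(i) of Theorem \ref{theo:6} delivers the $\|\Theta_h^{2,n}\|^2$ and $2\alpha k\sum\|\widehat{\Theta}_h^{2,i}\|_{L^q}^q$ portions of (\ref{e49}). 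A second test with $\phi=D\Theta_h^{2,n+1/2}$, as in step~1(ii), supplies the $\|\nabla\Theta_h^{2,n}\|^2$ and $k\sum\|D\Theta_h^{2,i}\|^2$ portions.

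The inductive step from $j$ to $j+1$ repeats this scheme on the identity (\ref{e22}) with $I^r_h$ in place of $R_h$: the induction hypothesis provides the needed estimates on $(\Lambda^j-\Gamma^j)D(u_h^{2j,\cdot}-I^r_h u^{2j,\cdot})$, and the starting values $\{u_h^{2j+2,n}\}_{n=0}^{j-1}$ are handled through the auxiliary scheme built from (\ref{c2b}) exactly as in Theorem \ref{theo:6}. Estimate (\ref{e48}) then follows from (\ref{e49}) by the triangle inequality together with (\ref{e1}) and (\ref{d31}). The main obstacle lies in the second test with $D\Theta_h^{2,n+1/2}$: without pointwise control on $\widehat{\Theta}_h^{2,n+1}$, the superlinear part of $f(\widehat{u}_h^{2,n+1})-f(I^r_h\widehat{u}^{2,n+1})$ must be handled via (\ref{e47}), splitting $\int|\widehat{\Theta}_h^{2,n+1}|^{q-1}|D\Theta_h^{2,n+1/2}|$ by H\"older with exponents tied to the already-established $L^q$-bound on $\widehat{\Theta}_h^{2,n+1}$ and absorbing the resulting $\|D\Theta_h^{2,n+1/2}\|^2$ factor via Cauchy's inequality back into the left-hand side.
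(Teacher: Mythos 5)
Your proposal follows the same route as the paper: rerun the proof of Theorem \ref{theo:6} with $I^r_h$ in place of $R_h$, accept the loss of one order in $h$ from the failure of the orthogonality identities (\ref{e18b}) and (\ref{e22b}) (the non-orthogonal term being bounded by $Ch^{r}\|\nabla \phi\|$), and use the growth condition (\ref{e47}) in place of the inverse-inequality-based $L^\infty$ control (\ref{e20cb})--(\ref{e20d}) for the nonlinear term. The only cosmetic difference is exactly where (\ref{e47}) is invoked, and on the delicate point you flag (the test with $D\Theta^{2,n+1/2}_h$ absent pointwise control) the paper's own proof is no more explicit than yours.
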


\begin{proof} We proceed by induction on $j=1,2,\cdots,p$, as in Theorem \ref{theo:6}, and the case $j=1$ is obvious.  The order of accuracy in space is reduced  since, instead of identities (\ref{e18b}) and (\ref{e22b}), we have
\begin{equation*}
\begin{aligned}
&\left \vert \left(M\nabla (Id-I^r_h)( E\bar{u}^{2j+2,n+1/2}-\Gamma^j E\bar{u}^{2j,n+1/2})  ,\nabla \phi \right) \right \vert \\ &\leq \|M\nabla (Id-I^r_h)( E\bar{u}^{2j+2,n+1/2}-\Gamma^j E\bar{u}^{2j,n+1/2})\|\|\nabla \phi\|\leq Ch^{r}\|\nabla \phi\|,
\end{aligned}
\end{equation*}
for each $\phi \in S_h$. If (\ref{e48})-(\ref{e49}) hold up to order $j$, $1\leq j\leq p$, we obtain the inequality (\ref{e25b}) substituting $R_h$ by $I_h^r$, $r+1$ by $r$, and $\Theta^{2j+2,n}_h$ by 
$$\Phi^{2j+2,n}_h=\bar u^{2j+2,n}_h-I_h\bar u^{2j+2,n}-\Gamma^j \left(  \bar u^{2j,n}_h-I_h\bar u^{2j,n}\right);$$that is
\begin{equation*}
\begin{aligned}
( D \Phi^{2j+2,n+1/2}_h,\widehat{\Phi}_h^{2j+2,n+1} )&+\gamma \|\nabla \widehat{\Phi}_h^{2j+2,n+1}\|^2 +\alpha \|\widehat{\Phi}_h^{2,n+1}\|^q_{L^q(\Omega)} \leq \mu\|\widehat{\Phi}_h^{2j+2,n+1}\|^2+\\&\left(C h^{r+1}+\|(\Lambda^j -\Gamma^j)D_-( \bar u_h^{2j,n+1}-I^r_h\bar u^{2j,n+1} ) \|\right)\|\widehat{\Phi}_h^{2j+2,n+1}\|.
\end{aligned}
\end{equation*}
In fact, Claim \ref{claim:1} for $R_h$ substituted by $I^r_h$ is still true from (\ref{e1}) and (\ref{e1b}). For each $\phi\in S_h$, the quantity 
 $$\left(  \left( \Lambda^j -\Gamma^j\right) D\left(  \bar u_h^{2j,i+1/2}-I^r_h\bar u^{2j,i+1/2} \right), \phi\right) $$ is a linear combination of quantities
 $$\begin{aligned} &\left(\Omega^{m+1}_{s},\phi \right)=
\left( M\nabla \left[E \left(\bar{u}^{2s,m+1/2}_h-\bar{u}^{2s,m+1/2} \right) -\Gamma^{s-2} )( \bar u^{2s-2,m+1/2}_h-\bar u^{2s-2,m+1/2})\right] ,\nabla \phi\right)\\&+\left(f( E\bar{u}^{2s,n+1/2}_h-\Gamma^{s-2} E\bar{u}_h^{2s-2,m+1/2}+\varphi(t_{m+1/2})) -f(E\bar{u}^{2s,m+1/2}-\Gamma^{s-2} E\bar{u}^{s-2,m+1/2}+\varphi(t_{m+1/2})),\phi\right), 
\end{aligned}   $$  
for suitable integer $m$ and $s$, where the coefficients of the linear combination dependent only on $j$ and the coefficients of the finite elements formulae (\ref{b6})-(\ref{02}). It follows from hypothesis (\ref{e47}) and the induction hypothesis, taking into account the scheme (\ref{e7})-(\ref{e8}), that 
$$\sum_{i=0}^n\|(\Lambda^j -\Gamma^j)D_-( \bar u_h^{2j,i+1/2}-I_h\bar u^{2j,i+1/2} ) \|^2\leq  Ch^{2r},$$which completes the proof.
\end{proof}

 \section{Numerical illustration}
\label{sec:numerical experiments}
For the numerical illustration we consider two problems. The first is a bistable reaction-diffusion equation that addresses stiffness. The second problem, taken from \cite{alonso2002runge}, is linear with inhomogeneous boundary conditions and addresses order reduction due to a possible ill-treatment of boundary conditions.
\subsection{Problem 1: Bistable reaction diffusion equation}
\begin{equation}\displaystyle 
\label{f1} \begin{aligned}
u_t-u_{xx}+10^4u(u-1)(u-0.25)&=0 \mbox{ in } \Omega\times (0,T),\\
\frac{\partial u}{\partial n}&=0\mbox{ on } \partial \Omega\times (0,T),\\
u(0)&=e^{-100x^2} \mbox{ in } \Omega.\\
\end{aligned}
\end{equation}
We choose $\Omega =(0,1)$ and $T=0.0295$. We are interested in the order of convergence in time. For this purpose, we simply use $P_1$ Lagrange finite elements in space with uniform mesh and the step $h=10^{-3}$. We compute a reference solution using DC10 with the time step $k=1.64\times 10^{-5}$ (N=1800). Table  \ref{tab:4}  gives the maximal absolute error in time, norm $L^2(\Omega)$ in space, and the order of convergence for each pair of consecutive time steps.

 For this problem, we have $$f(u)=10^4u(u-1)(u-0.25),$$and inequalities (\ref{a2}) and (\ref{a3}) hold with $\tau (0)=-1500$ and $\mu_0=8125/3$.  Therefore, according to Theorem \ref{theo:6}, the maximal time step to solve the problem with the DC methods is $k_0=6/8125\simeq 7.38\times 10^{-4}$, that is $N=39.9479\simeq 40$.

 \begin{table}[!ht]
\caption{Absolute error (order of convergence) in for the bistable reaction-diffusion equation}
\label{tab:4}       
\begin{tabular}{llllllll}
\hline\noalign{\smallskip}
 $N$  &  \centering DC2 & \centering DC4 & \centering DC6 &\centering DC8& DC10\\[.5ex]
 40   &  0.115         & 4.62e-03        & 9.14e-04         & 1.97e-04      & 1.11e-03\\[.5ex]
 90   &  8.48e-04(3.21) & 4.59e-05(5.68) & 2.05e-06(7.52)    & 1.55e-06(5.97) & 1.45e-06(8.22)\\[.5ex]
 180  &  5.91e-05(3.84) & 2.17e-06(7.72) & 5.53e-09(8.53)    & 4.09e-09(8.56)    & 1.90e-09(9.57)\\[.5ex]
 360  &  3.87e-06(3.93) & 8.59e-10(7.98) & 2.57e-12(11.07) &  4.51e-13(13.15) & 8.57e-14(14.44)\\[.5ex]
 450  &  1.55e-06(3.96) & 1.44e-10(8.01) &  2.33e-13(10.74) & 2.40e-14(13.14) &  2.48e-15(15.88)  \\[.5ex]
 900  &  9.97e-08(4.00) & 5.63e-13(7.99) &  2.67e-16(9.77) & 8.62e-19(14.75) & 7.36e-21(18.36)\\[.5ex]
 1800 &  6.25e-09(3.99) & 2.18e-15(8.00) & 2.13e-19(10.29) &  1.74e-22(12.27) & --          \\[.5ex]
\hline\noalign{\smallskip}
\end{tabular}
\end{table}

For the computational effort of the DC methods, we recall that to compute an approximate solution at the discrete points $0=t_0<t_1<\cdots <t_N=T$, $DC2 $ solves $N$ nonlinear systems while $DC2j$, $j\geq 2$, solves $j\times N$ systems. For the bistable reaction-diffusion equation, it is clear that, for $N>180$, higher order DC method have the smallest maximal error by solving less systems of equations. For example, $DC10$ achieves absolute error of about $2.48\times 10^{-15}$ by solving approximately $2250$ nonlinear systems while $DC4$ achieves almost the same  accuracy by solving 3600 nonlinear systems. $DC10$, $DC8$, $DC4$ and $DC2$ solve approximately 1800 nonlinear systems, but the corresponding errors are, respectively, $8.57\times 10^{-14}$, $2.4\times 10^{-14}$,  $5.63\times 10^{-13}$ and $6.25\times 10^{-9}$. Since the resolution of nonlinear systems is the main burden for these methods, using high order DC methods is advantageous.

\subsection{Problem 2 (see \cite{alonso2002runge})}
\begin{equation}\displaystyle 
\label{f2} \begin{aligned}
u_t-u_{xx}&=e^t\left(x^2-2x-1.25 \right),\quad 0\leq x\leq 1,\, 0\leq t\leq 1,\\
u(0)&=x^2-2x+0.75, \quad 0\leq x\leq 1,
\end{aligned}
\end{equation}
with exact solution $u(x,t)=e^t\left(x^2-2x+0.75\right)$. We  transform this problem in the form (\ref{aa1}) by choosing
$$\varphi(x,t)=(1-x)u(0,t)+xu(1,t), \mbox{ for Dirichlet boundary condition (DBC)};$$  
$$\varphi(x,t)=\left(x-\frac{1}{2}x^2\right)\frac{\partial u}{\partial x}(0,t)+\frac{1}{2}x^2\frac{\partial u}{\partial x}(1,t), \mbox{ for Neumann boundary condition (NBC)};$$and, 
$$\varphi(x,t)=(x-1)\left[  \frac{\partial u}{\partial x}(0,t)-u(1,t)\right] +xu(1,t), \mbox{ for mixed boundary condition (MBC)}.$$  
We use $P_1$ Lagrange finite elements in space with uniform mesh and the space step $h= 2.5\times 10^{-4}$ for Dirichlet and mixed Dirichlet-Neumann boundary conditions, and $h=10^{-2}$ for the  Neumann boundary condition. Table  \ref{tab:5}  gives the maximal absolute error in time, with norm $L^2(\Omega)$ in space, and the order of convergence for each pair of consecutive time steps. This table shows that the schemes DC2, DC4 and DC6 achieve their theoretical order on this problem.  DC8 and DC10 are more accurate, but their order of accuracy is not observed since errors for these methods are close/equal to the machine accuracy from the largest time step $k=0.2$ ($N=5$).

\begin{table}[!ht]
\caption{Absolute error (order of convergence) in for problem (\ref{f2})}
\label{tab:5}       
\begin{tabular}{llllllll}
\hline\noalign{\smallskip}
 $N$  &  \centering DC2 & \centering DC4 & \centering DC6 &\centering DC8& DC10\\[.7ex]
 \hline\noalign{\smallskip}
&& \centering DBC& \centering $h=2.5\times 10^{-4}$&&\\[1ex]
\hline\noalign{\smallskip}
 5   &  3.87e-04         & 1.07e-07        & 4.60e-10         & 3.36e-11      & 3.32e-12\\[.7ex]
 10   &  2.43e-05\,(3.99) & 9.38e-10\,(6.83) & 3.11e-12\,(7.21)   & 6.73e-13(5.56) & 4.51e-13(2.88)\\[.7ex]
20  &  1.52e-06\,(3.99) & 6.40e-12\,(7.19) & 3.81e-14\,(6.35)       & 8.01e-15(6.39)    & 3.99e-15(6.82)\\[.7ex]
  \hline\noalign{\smallskip}
&&\centering NBC&\centering $h=10^{-2}$&&\\ [1ex]\hline\noalign{\smallskip}
5   &  3.31e-04         & 2.01e-09        & 6.59e-14         & 0.      & 0.\\[.7ex]
10  &  2.07e-05\,(3.99) & 9.16e-12\,(7.78) & 0.    & 0. & 0.\\[.7ex]
20  &  1.29e-06\,(3.99) & 3.82e-14\,(7.91) & 0.       & 0.    & 0.\\[.7ex]
  \hline\noalign{\smallskip}
&&\centering MBC&\centering $h=2.5\times10 ^{-4}$&&\\[1ex]\hline\noalign{\smallskip}
5   &  4.32e-03      & 8.39e-08        & 1.45e-10         & 3.09e-11      & 4.07e-12.\\[.7ex]
10  &  2.71e-04\,(3.99) & 3.52e-10\,(7.89) & 1.81e-12\,(6.32)    & 2.83e-13(6.77) & 1.44e-13(4.82)\\[.7ex]
20  &  1.69e-05\,(4.00) & 2.93e-12\,(6.91)     & 1.72e-14\,(6.71)       & 2.98e-15(6.57)    & 2.83e-15(5.67)\\[.7ex]
\hline\noalign{\smallskip}
\end{tabular}
\end{table}


\end{document}